\providecommand{\printnomenclature}{\printglossary}
\providecommand{\makenomenclature}{\makeglossary}
\providecommand{\tabularnewline}{\\}
\numberwithin{equation}{section}
\numberwithin{figure}{section}
\numberwithin{table}{section}
\theoremstyle{plain}
\newtheorem{thm}{\protect\theoremname}[section]
\theoremstyle{definition}
\newtheorem{defn}[thm]{\protect\definitionname}
\theoremstyle{plain}
\newtheorem{conjecture}[thm]{\protect\conjecturename}
\theoremstyle{plain}
\newtheorem{lem}[thm]{\protect\lemmaname}
\theoremstyle{plain}
\newtheorem{prop}[thm]{\protect\propositionname}
\theoremstyle{remark}
\newtheorem{rem}[thm]{\protect\remarkname}
\date{}
\DeclareMathOperator{\aff}{aff}
\DeclareMathOperator{\area}{area}
\DeclareMathOperator{\conv}{conv}
\DeclareMathOperator{\dist}{dist}
\DeclareMathOperator{\inter}{int}
\DeclareMathOperator{\lin}{lin}
\DeclareMathOperator{\pos}{pos}
\DeclareMathOperator{\relint}{relint}
\DeclareMathOperator{\vol}{vol}
\providecommand{\conjecturename}{Conjecture}
\providecommand{\definitionname}{Definition}
\providecommand{\lemmaname}{Lemma}
\providecommand{\propositionname}{Proposition}
\providecommand{\remarkname}{Remark}
\providecommand{\theoremname}{Theorem}
\begin{document}
\title{On the Sausage Catastrophe in $4$ Dimensions }
\author{Ji Hoon Chun\thanks{The research of the author was supported by the Deutsche Forschungsgemeinschaft
(DFG) Graduiertenkolleg \textquotedbl Facets of Complexity/Facetten
der Komplexität\textquotedbl{} (GRK 2434). } }
\maketitle
\begin{abstract}
The Sausage Catastrophe of J. Wills (1983) is the observation that
in $d=3$ and $d=4$, the densest packing of $n$ spheres in $\mathbb{R}^{d}$
is a sausage for small values of $n$ and jumps to a full-dimensional
packing for large $n$ without passing through any intermediate dimensions.
Let $n_{d}^{*}$ be the smallest value of $n$ for which the densest
packing of $n$ spheres in $\mathbb{R}^{d}$ is full-dimensional and
$N_{d}^{*}$ be the smallest value of $N$ for which the densest packing
of $N$ spheres in $\mathbb{R}^{d}$ is full-dimensional for all $N\geq N_{d}^{*}$.
We extend the work of Gandini and Zucco (1992) to obtain new upper
bounds of $n_{4}^{*}\leq338,\!196$ and $N_{4}^{*}\leq516,\!946$.
Some lengthy and repetitive components of the proof of the latter
result were obtained using interval arithmetic. 
\end{abstract}

\section{\protect\label{sec: Background and history of the Sausage Catastrophe}Introduction }

Let $B^{d}:=\left\{ \mathbf{x}\in\mathbb{R}^{d}\,\middle|\,\left|\mathbf{x}\right|\leq1\right\} $\nomenclature[A01 B^d]{$B^{d}$}{The $d$-dimensional unit ball}
be the \textbf{unit ball}\index{ball} (informally referred to as
a ``sphere'') in $\mathbb{R}^{d}$, where $\left|\mathbf{v}\right|:=\sqrt{v_{1}^{2}+\cdots+v_{d}^{2}}$\nomenclature[A02 abs v]{$\left|\mathbf{v}\right|$, $\mathbf{v}$ is a vector}{The Euclidean ($2$-norm) length of $\mathbf{v}$}
for a vector $\mathbf{v}=\left(v_{1},\ldots,v_{d}\right)^{\mathsf{T}}\in\mathbb{R}^{d}$. 
\begin{defn}
A set $C\subset\mathbb{R}^{d}$\nomenclature[A03 C]{$C$}{A sphere packing (finite or infinite)}
is a \textbf{packing set}\index{packing set} of the closed unit ball
$B^{d}$ (colloquially called a ``sphere packing'') if $\left(\mathbf{x}+\inter B^{d}\right)\cap\left(\mathbf{y}+\inter B^{d}\right)=\emptyset$
for all distinct $\mathbf{x},\mathbf{y}\in C$, where $\inter X$
is the interior of $X$\nomenclature[A04 int X]{$\text{int}\,X$}{The interior of $X$}. 
\end{defn}

Extensive information on packings can be found in Conway and Sloane
(1999) \cite{ConwaySloane1999} for infinite packings and Böröczky
Jr. (2004) \cite{Boeroeczky2004} for finite packings. Additionally,
Zong (1999) \cite{Zong1999} presents detailed summaries of several
major results and problems in both the infinite and finite settings.
For $d\in\mathbb{N}$, we denote the set of all sphere packings with
$n\in\mathbb{N}=\left\{ 1,2,\ldots\right\} $ points in $\mathbb{R}^{d}$
by $\mathscr{P}_{n}^{d}$\nomenclature[A05 P_{n}^{d}(B^d)]{$\mathscr{P}_{n}^{d}$}{The set of all sphere packings of $B^{d}$ in $\mathbb{R}^{d}$ with $n$ points}
and the set of all sphere packings with infinitely many points in
$\mathbb{R}^{d}$ by $\mathscr{P}^{d}$\nomenclature[A06 P^{d}(B^d)]{$\mathscr{P}^{d}$}{The set of all infinite sphere packings of $B^{d}$ in $\mathbb{R}^{d}$}.
Let $\vol_{k}$\nomenclature[A07 vol_k]{$\text{vol}_{k}$}{The $k$-dimensional Lebesgue measure ($k \leq d$)}
denote the $k$-dimensional Lebesgue measure and $\kappa_{d}:=\vol_{d}\left(B^{d}\right)$\nomenclature[A08 kappa_d]{$\kappa_{d}$}{The volume of the $d$-dimensional unit ball}.
The notations $\vol$\nomenclature[A09 vol]{$\text{vol}$}{The $d$-dimensional Lebesgue measure},
$\area$\nomenclature[A10 area]{$\text{area}$}{The $2$-dimensional Lebesgue measure., i.e., the area},
and $\ell$\nomenclature[A11 length]{$\ell$}{The $1$-dimensional Lebesgue measure., i.e., the length}
are used for $k=d$, $k=2$, and $k=1$ respectively. Throughout this
paper we use $n$\nomenclature[A12 n]{$n$}{The number of points in a packing}
for the number of points in a packing and $d$\nomenclature[A13 d]{$d$}{The dimension of the Euclidean space $\mathbb{R}^{d}$ under consideration}
for the dimension of the underlying Euclidean space. 
\begin{defn}
\label{def: Finite and infinite packing densities}Let $C$ be a packing
set. If $\left|C\right|<\infty$, where $\left|C\right|$\nomenclature[A14 abs X]{$\left|X\right|$, $X$ is a set}{The cardinality of $X$}
is its cardinality, then the \textbf{finite packing density}\index{density, finite packing}
of $C$ is defined by\nomenclature[A15 delta(C),  C finite]{$\delta\left(C\right)$,  $C$ finite}{The finite packing density of $C$}
\begin{equation}
\delta\left(C\right):=\frac{\left|C\right|\kappa_{d}}{\vol\left(\conv C+B^{d}\right)}\label{eq: Finite packing density}
\end{equation}
 If $\left|C\right|=\infty$ then the \textbf{(infinite) packing density}\index{density, infinite packing}
of $C$ is defined by (see \cite{Zong1999}, Section 1.1)\nomenclature[A16 delta(C),  C infinite]{$\delta\left(C\right)$,  $C$ infinite}{The infinite packing density of $C$}
\[
\delta\left(C\right):=\limsup_{r\rightarrow\infty}\frac{\left|\left\{ \mathbf{x}\in C\,\middle|\,\mathbf{x}+B^{d}\subseteq r\left[-1,1\right]^{d}\right\} \right|\kappa_{d}}{\vol\left(r\left[-1,1\right]^{d}\right)}.
\]
 For a given $d$ and $n$, the \textbf{density of the densest finite
(sphere) packing}\index{density of the densest finite packing} $\delta\left(d,n\right)$
of $n$ spheres and the \textbf{density of the densest infinite (sphere)
packing}\index{density of the densest infinite packing} $\delta\left(d\right)$
are $\delta\left(d,n\right):=\sup\left\{ \delta\left(C_{n}\right)\,\middle|\,C_{n}\in\mathscr{P}_{n}^{d}\right\} $\nomenclature[A17 delta(d,  n)]{$\delta\left(d,  n\right)$}{The density of the densest finite sphere packing of $B^{d}$ in $\mathbb{R}^{d}$ with $n$ points}
and $\delta\left(d\right):=\sup\left\{ \delta\left(C\right)\,\middle|\,C\in\mathscr{P}^{d}\right\} $\nomenclature[A18 delta(d)]{$\delta\left(d\right)$}{The density of the densest infinite sphere packing in $\mathbb{R}^{d}$}
respectively. 

A packing $C$ in $\mathbb{R}^{d}$ is called a \textbf{sausage arrangement}\index{sausage arrangement}
$S_{n}^{d}$\nomenclature[A19 S_{n}^{d}]{$S_{n}^{d}$}{A sausage arrangement with $n$ spheres in $\mathbb{R}^{d}$}
of cardinality $n$ (``sausage'' for short) if there exists a unit
vector $\mathbf{u}\in\mathbb{R}^{d}$, a vector $\mathbf{v}\in\mathbb{R}^{d}$,
and an $n\in\mathbb{N}$ such that $C=\left\{ \mathbf{v}+2i\mathbf{u}\mid i\in\left\{ 0,\ldots,n-1\right\} \right\} $,
that is, the points of $C$ are all on a single line and as close
as possible. 
\end{defn}

The density of a sausage is 
\begin{equation}
\delta\left(S_{n}^{d}\right)=\frac{n\kappa_{d}}{\kappa_{d}+2\left(n-1\right)\kappa_{d-1}},\label{eq: Sausage density}
\end{equation}
 and the \textbf{Sausage Conjecture}\index{Sausage Conjecture} of
L. Fejes Tóth \cite{FejesToth1975} states that in dimensions $d\geq5$,
the optimal finite packing is reached by a sausage. 
\begin{conjecture}[Sausage Conjecture (L. Fejes Tóth, 1975)]
Let $d\geq5$ and $n\in\mathbb{N}$, then $\delta\left(S_{n}^{d}\right)=\delta\left(d,n\right)$,
and the maximum density $\delta\left(d,n\right)$ is only obtained
with a sausage arrangement. Equivalently, $\vol\left(S_{n}^{d}+B^{d}\right)\leq\vol\left(C+B^{d}\right)$
for all $C\in\mathscr{P}_{n}^{d}$, with equality if and only if $C=S_{n}^{d}$.
\end{conjecture}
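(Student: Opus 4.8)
The plan is to work with the equivalent volumetric reformulation already recorded in the statement: show that $\vol\left(\conv C + B^{d}\right) \geq \vol\left(S_{n}^{d} + B^{d}\right) = \kappa_{d} + 2\left(n-1\right)\kappa_{d-1}$ for every $C \in \mathscr{P}_{n}^{d}$, with equality precisely for sausages. The natural tool is Steiner's formula: writing $K := \conv C$,
\[
\vol\left(K + B^{d}\right) = \sum_{j=0}^{d}\binom{d}{j} W_{j}\left(K\right),
\]
where the $W_{j}\left(K\right)$ are the quermassintegrals of $K$, with $W_{0}\left(K\right) = \vol\left(K\right)$, $W_{d}\left(K\right) = \kappa_{d}$, and $W_{d-1}$ proportional to the mean width. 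When $\conv C$ is a segment of length $2\left(n-1\right)$ only the terms $j = d-1$ and $j = d$ survive, reproducing $\kappa_{d} + 2\left(n-1\right)\kappa_{d-1}$ exactly. Since $W_{d}\left(K\right) = \kappa_{d}$ for every $K$, the conjecture collapses to the single inequality
\[
\sum_{j=0}^{d-1}\binom{d}{j} W_{j}\left(K\right) \geq 2\left(n-1\right)\kappa_{d-1}.
\]

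First I would force the cardinality $n$ into the estimate through the packing hypothesis. The $n$ unit balls centred at $C$ have pairwise disjoint interiors and lie inside $K + B^{d}$, giving the crude bound $\vol\left(K + B^{d}\right) \geq n\kappa_{d}$; more usefully, the centres are pairwise at distance $\geq 2$, so $K$ cannot be small in too many directions and one can bound $W_{0}\left(K\right) = \vol\left(K\right)$ and the intermediate quermassintegrals below in terms of $n$. I would then couple neighbouring terms using the Aleksandrov--Fenchel inequalities (equivalently, the log-concavity of $j \mapsto W_{j}\left(K\right)$), so that a bound on one quermassintegral propagates to its neighbours. The target is an estimate of the form $\sum_{j=0}^{d-1}\binom{d}{j} W_{j}\left(K\right) \geq 2\left(n-1\right)\kappa_{d-1} + R\left(K\right)$ with a remainder $R\left(K\right) \geq 0$ vanishing exactly when $K$ degenerates to a segment.

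The hard part, and the reason the conjecture is still open, is the dimension-dependent balancing of these terms. The mean-width term $\binom{d}{d-1} W_{d-1}\left(K\right)$ is in fact \emph{larger} for the long, thin sausage than for a rounded full-dimensional cluster of the same cardinality, so the sausage never wins term by term: its optimality comes from the fact that a full-dimensional $K$ is obliged to spend content on $W_{0},\ldots,W_{d-2}$, and this expenditure outweighs the mean-width deficit only when the ratio $\kappa_{d-1}/\kappa_{d}$, which grows like $\sqrt{d}$, is large. This is exactly why the Sausage Catastrophe occurs in $d = 3$ and $d = 4$, where the balance tips the other way for large $n$, and it shows that the restriction $d \geq 5$ is necessary but far from sufficient for the naive estimates. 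I would therefore expect a clean argument to go through only for $d$ above an explicit threshold (the best known in the literature being $d \geq 42$), with the genuinely delicate range $5 \leq d \leq 41$ demanding sharper, configuration-sensitive lower bounds on the intermediate quermassintegrals than the Aleksandrov--Fenchel relations alone supply.

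Finally, for the equality characterization I would argue that if $C$ is not collinear then $\conv C$ is at least two-dimensional, which forces $W_{d-2}\left(K\right) > 0$ and, once $C$ is full-dimensional, $W_{0}\left(K\right) = \vol\left(K\right) > 0$; tracking this strictly positive contribution through the estimate above yields $R\left(K\right) > 0$ and hence strict inequality, so that the minimum is attained only by the sausage.
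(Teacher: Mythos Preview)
The statement you are attempting to prove is the Sausage \emph{Conjecture}; the paper records it as an open problem and does not offer a proof, so there is no ``paper's own proof'' to compare against. Your proposal is not a proof either, and to your credit you recognise this: you outline the Steiner/quermassintegral framework, note that the mean-width term actually favours clusters, and then concede that the argument only closes for $d$ above an explicit threshold (the Betke--Henk--Wills result for $d\geq 42$), leaving the conjectural range $5\leq d\leq 41$ untouched. That is an accurate diagnosis of the state of the art, but it means the central gap is not a missing technical step --- it is that no one currently knows how to supply the ``sharper, configuration-sensitive lower bounds'' you allude to in that range.

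A smaller issue: even in the range where the sausage is known to be optimal, the actual proofs do not proceed by bounding quermassintegrals via Aleksandrov--Fenchel alone; they use local arguments (Betke--Henk--Wills work with Dirichlet--Voronoi-type decompositions of $\conv C + B^{d}$ and inradius/circumradius estimates on the pieces). Your sketch of ``couple neighbouring terms using Aleksandrov--Fenchel so that a bound on one quermassintegral propagates'' is too loose to produce the inequality even for large $d$: log-concavity of $j\mapsto W_{j}(K)$ gives relations \emph{between} the $W_{j}$ but does not by itself give the needed lower bound in terms of $n$ unless you already control one intermediate quermassintegral sharply, which is precisely the difficulty. So the proposal is best read as a correct heuristic explanation of why the conjecture is plausible and why $d\geq 5$ is the right threshold, rather than as a proof strategy with an identifiable endpoint.
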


\subsection{\protect\label{subsec: Polytopes and Steiner's formula}Polytopes
and Steiner's formula }

To make this paper more self-contained, in this subsection we briefly
state the relevant basic notions of discrete and convex geometry regarding
polytopes, hyperplanes, and lattices. This information is available
among the books by Gruber \cite{Gruber2007} and Grünbaum \cite{Gruenbaum2003},
along with Conway and Sloane (1999) \cite{ConwaySloane1999}, the
first volume \cite{GruberWills1993A} of a two-volume compendium edited
by Gruber and Wills, and Henk, Richter-Gebert, and Ziegler's chapter
\cite{GoodmanORourkeToth2018HenkRichterGebertZiegler15} on polytopes
in the handbook \cite{GoodmanORourkeToth2018} edited by Goodman,
O'Rourke, and Tóth. Here we mainly follow the presentations of Gruber
and Henk, Richter-Gebert, and Ziegler. 

A set $P\subseteq\mathbb{R}^{d}$ is a (convex) \textbf{polytope}\index{polytope}
in $\mathbb{R}^{d}$ if $P=\conv\left\{ \mathbf{x}^{1},\ldots,\mathbf{x}^{n}\right\} $
for $\mathbf{x}^{1},\ldots,\mathbf{x}^{n}\in\mathbb{R}^{d}$ and a
finite $n\geq0$. Let $P\subset\mathbb{R}^{d}$ be a convex polytope
and $F_{k}\in\mathscr{F}_{k}\left(P\right)$. The \textbf{normal cone}\index{normal cone}
$N\left(P,F_{k}\right)$\nomenclature[B01 N]{$N\left(P,F_{k}\right)$}{The normal cone at the face $F_{k}$ of $P$}
is the set of all $\mathbf{x}\in\mathbb{R}^{d}$ with the property
that there exists a $\lambda\geq0$ such that $F_{k}\subseteq P\cap\left\{ \mathbf{y}\in\mathbb{R}^{d}\,\middle|\,\mathbf{x}\cdot\mathbf{y}=\lambda\right\} $
and $\mathbf{x}\cdot\mathbf{y}\leq\lambda$ for all $\mathbf{y}\in P$,
and the \textbf{external angle}\index{external angle} $\theta\left(P,F_{k}\right)$\nomenclature[B02 theta]{$\theta\left(P,F_{k}\right)$}{The external angle of the $k$-dimensional face $F_{k}$ of $P$}
is 
\[
\theta\left(P,F_{k}\right):=\frac{\vol\left(\left(N\left(P,F_{k}\right)+\lin\left(F_{k}-\mathbf{x}\right)\right)\cap B^{d}\right)}{\vol\left(B^{d}\right)},
\]
 where $\mathbf{x}\in\relint\left(F_{k}\right)$. For two convex bodies
$K,L\subset\mathbb{R}^{d}$, the \textbf{Minkowski addition}\index{addition, Minkowski}
$K+L$ of $K$ and $L$ is defined by $K+L:=\left\{ \mathbf{x}+\mathbf{y}\in\mathbb{R}^{d}\,\middle|\,\mathbf{x}\in K,\ \mathbf{y}\in L\right\} $,
and here the sum of two sets always denotes the Minkowski addition.
Steiner \cite{Steiner1840} proved a formula which expresses $\vol\left(K+\lambda B^{d}\right)$,
$\lambda\geq0$, as a polynomial in $\lambda$. For our purposes it
is convenient to use the following representation of Steiner's formula
for a convex polytope: 
\begin{equation}
\vol\left(P+B^{d}\right)=\vol\left(P\right)+\sum_{k=1}^{d-1}\sum_{F_{k}\in\mathscr{F}_{k}\left(P\right)}\vol_{k}\left(F_{k}\right)\theta\left(P,F_{k}\right)\kappa_{d-k}+\kappa_{d}\label{eq: Steiner's formula for a convex polytope}
\end{equation}
 (see \cite{GruberWills1993Sangwine-Yager1.2}, Section 3, and \cite{GoodmanORourkeToth2018HenkRichterGebertZiegler15}),
where $\mathscr{F}_{k}\left(P\right)$\nomenclature[B03 F_k(P)]{$\mathscr{F}_{k}\left(P\right)$}{The set of all $k$-dimensional faces of the polytope $P$}
is the set of all $k$-dimensional faces of $P$. Let $\mathbf{x}\in\mathbb{R}^{d}$
and $\lambda\in\mathbb{R}$, then define the \textbf{hyperplane}\index{hyperplane}
$H\left(\mathbf{x},\lambda\right):=\left\{ \mathbf{y}\in\mathbb{R}^{d}\,\middle|\,\mathbf{x}\cdot\mathbf{y}=\lambda\right\} $\nomenclature[B05 H(x, lambda)]{$H\left(\mathbf{x},\lambda\right)$}{The hyperplane $H\left(\mathbf{x},\lambda\right)=\left\{ \mathbf{y}\in\mathbb{R}^{d}\,\middle|\,\mathbf{x}\cdot\mathbf{y}=\lambda\right\}$}
and the closed \textbf{half-space}\index{half-space}\nomenclature[B06 H^{-}(x, lambda)]{$H^{-}\left(\mathbf{x},\lambda\right)$}{The closed half-space $H^{-}\left(\mathbf{x},\lambda\right):=\left\{ \mathbf{y}\in\mathbb{R}^{d}\,\middle|\,\mathbf{x}\cdot\mathbf{y}\leq\lambda\right\}$}
$H^{-}\left(\mathbf{x},\lambda\right):=\left\{ \mathbf{y}\in\mathbb{R}^{d}\,\middle|\,\mathbf{x}\cdot\mathbf{y}\leq\lambda\right\} $.
We say that $H\left(\mathbf{x},\lambda\right)$ is a \textbf{support
hyperplane}\index{support hyperplane} of a closed convex set $K$
at $\mathbf{x}$ if $K\cap H\left(\mathbf{x},\lambda\right)\neq\emptyset$
and $K\subseteq H^{-}\left(\mathbf{x},\lambda\right)$. A \textbf{lattice}
$\Lambda$\nomenclature[B04 Lambda]{$\Lambda$}{A lattice in $\mathbb{R}^{d}$}
in $\mathbb{R}^{d}$ is a discrete subgroup of $\mathbb{R}^{d}$,
and we assume that $\Lambda$ is of \textbf{full rank}\index{rank, full},
that is, $\dim\Lambda=d$. For a given lattice $\Lambda$ and subset
$S\subset\mathbb{R}^{d}$, we define $G\left(S\right):=\left|S\cap\Lambda\right|$\nomenclature[B07 G(S)]{$G\left(S\right)$}{The number of points of a lattice $\Lambda$ in the set $S$}. 

\subsection{\protect\label{subsec: Sausages in dimension 2}Sausages in dimension
$2$ }

The Sausage Conjecture of Fejes Tóth claims that in all dimensions
$d\geq5$, the sausage arrangement gives the densest packing for any
spheres in $\mathbb{R}^{d}$. In general, this statement does not
hold true for $d<5$ due to the presence of infinite packings obtained
from the lattices $A_{2}$, $D_{3}$, and $D_{4}$ (see, for example,
Conway and Sloane \cite{ConwaySloane1999}, Chapter 4) with greater
infinite packing density than the finite packing densities of almost
all sausages in dimensions $2$, $3$, and $4$. Hence it follows
from the definition of the infinite packing density that there exist
finite subsets of $A_{2}$, $D_{3}$, and $D_{4}$ which are denser
than equinumerous sausages in $\mathbb{R}^{2}$, $\mathbb{R}^{3}$,
and $\mathbb{R}^{4}$. 
\begin{center}
\begin{table}[H]
\noindent \begin{centering}
\begin{tabular}{cccrrcr}
\toprule 
 &  & \multicolumn{2}{c}{\textbf{Infinite packings}} &  & \multicolumn{2}{c}{\textbf{Finite packings}}\tabularnewline
Dimension &  & Packing & Density &  & Sausage & Density\tabularnewline
\midrule
\midrule 
$1$ &  & $\mathbb{Z}$ & ${\displaystyle 1.00000}$ &  & $S_{1}^{1}$ & ${\displaystyle 1.00000}$\tabularnewline
\midrule 
\multirow{2}{*}{$2$} & \multirow{2}{*}{} & \multirow{2}{*}{$A_{2}$} & \multirow{2}{*}{${\displaystyle \frac{\pi}{2\sqrt{3}}\approx0.90690}$} & \multirow{2}{*}{} & $S_{1}^{2}$ & ${\displaystyle 1.00000}$\tabularnewline
 &  &  &  &  & $S_{2}^{2}$ & ${\displaystyle \frac{2\pi}{\pi+4}\approx0.87980}$\tabularnewline
\midrule 
\multirow{2}{*}{$3$} & \multirow{2}{*}{} & \multirow{2}{*}{$D_{3}$} & \multirow{2}{*}{${\displaystyle \frac{\pi}{3\sqrt{2}}\approx0.74048}$} & \multirow{2}{*}{} & $S_{3}^{3}$ & ${\displaystyle \frac{3}{4}\approx0.75000}$\tabularnewline
 &  &  &  &  & $S_{4}^{3}$ & ${\displaystyle \frac{16}{22}\approx0.72727}$\tabularnewline
\midrule 
\multirow{2}{*}{$4$} & \multirow{2}{*}{} & \multirow{2}{*}{$D_{4}$} & \multirow{2}{*}{${\displaystyle \frac{\pi^{2}}{16}\approx0.61685}$} & \multirow{2}{*}{} & $S_{9}^{4}$ & ${\displaystyle \frac{27\pi^{2}}{3\pi^{2}+128\pi}\approx0.61723}$\tabularnewline
 &  &  &  &  & $S_{10}^{4}$ & ${\displaystyle \frac{10\pi^{2}}{\pi^{2}+48\pi}\approx0.61429}$\tabularnewline
\midrule 
$5$ &  & $D_{5}$ & ${\displaystyle \frac{\pi^{2}}{15\sqrt{2}}\approx0.46526}$ &  & ${\displaystyle \lim_{n\rightarrow\infty}S_{n}^{5}}$ & ${\displaystyle \frac{8}{15}\approx0.53333}$\tabularnewline
\bottomrule
\end{tabular}
\par\end{centering}
\caption{\protect\label{tab: Densest infinite packings in small dimensions}The
densest known packings in dimensions $1$ through $5$ (see \cite{ConwaySloane1999},
Chapter 1, Table 1.2) compared with sausage packings.}
\end{table}
\par\end{center}

The sausage is never the optimal packing for all nontrivial numbers
(i.e. more than two) of circles \cite{Wegner1986,FejesToth1975}---also
see the discussions in \cite{Boeroeczky2004}, Sections 4.1--4.3,
but the fact that the sausage arrangement is the densest one-dimensional
packing, however, allows us to conclude that the densest finite packing
in $\mathbb{R}^{2}$ is \emph{necessarily} two-dimensional for $n\geq3$,
even if its specific nature is unknown. 

\subsection{\protect\label{subsec: Sausages in dimensions 3 and 4}Sausages in
dimensions $3$ and $4$ }

The situation in three and four dimensions is more complicated. The
sausage is optimal for small numbers of spheres while the best known
packings for large numbers of spheres are full-dimensional. Curiously,
the best known packings are never in-between, a phenomenon known as
the \textbf{Sausage Catastrophe}\index{Sausage Catastrophe}, coined
by Jörg Wills \cite{Wills1983} in 1983. The wide-ranging survey of
Henk and Wills \cite{HenkWills2020} summarizes the progress and results
on the Sausage Catastrophe, including as it relates to general convex
bodies. For convenience we will use phrases such as $C$ ``is denser
than the sausage'' if $\left|C\right|=n$ and $\delta\left(C\right)>\delta\left(S_{n}^{d}\right)$. 

The sausage is trivially optimal for $n\leq2$ or $d=1$. The simplest
nontrivial case is $d=2$, for which the packing $C_{n}^{\hexagon}$
is denser than the sausage for all $n\geq3$. Zong (\cite{Zong1999},
Section 13.4, Example 13.1) provides a concise proof of the optimality
of the sausage for $n=3$ in any dimension $d\geq3$. For the general
case a couple of definitions are needed---see Wills (1983, 1985)
\cite{Wills1983,Wills1985}. 
\begin{defn}
\label{def: n*_d and N*_d}Let $d\geq2$, $n\geq1$, $\mathscr{C}_{n}^{d}$
be the subset of $\mathscr{P}_{n}^{d}$ consisting of all packings
with greatest density in $\mathscr{P}_{n}^{d}$, and $k\left(d,n\right):=\max\left\{ \dim C\,\middle|\,C\in\mathscr{C}_{n}^{d}\right\} $.
Then define\nomenclature[C01 n*_d]{$n_{d}^{*}$}{For a given $d$, $n_{d}^{*}$ is the smallest number such that the sausage is not the densest packing of $n$ points in $\mathbb{R}^{d}$}\nomenclature[C02 N*_d]{$N_{d}^{*}$}{For a given $d$, $N_{d}^{*}$ is the smallest number such that the sausage is never the densest packing of $n$ points in $\mathbb{R}^{d}$ for all $N\geq N_{d}^{*}$}
\begin{align*}
n_{d}^{*} & :=\min\left\{ n\in\mathbb{N}\,\middle|\,k\left(d,n\right)>1\right\} ,\\
N_{d}^{*} & :=\min\left\{ N\in\mathbb{N}\,\middle|\,k\left(d,N'\right)>1\text{ for all }N'\geq N\right\} .
\end{align*}
\end{defn}

These are thresholds for the crossover from sausage to non-sausage
($\dim>1$) packings. For a given $d$, $n_{d}^{*}$ is the smallest
number such that the sausage is not the densest packing of $n$ points
in $\mathbb{R}^{d}$, while the threshold $N_{d}^{*}$ is the smallest
number such that the sausage is never the densest packing of $N\geq N_{d}^{*}$
points in $\mathbb{R}^{d}$. It follows from the discussion in Subsection
\ref{subsec: Sausages in dimension 2} that $n_{2}^{*}=N_{2}^{*}=3$. 

Originally, Wills (1983) \cite{Wills1983} provided the upper bounds
of $n_{3}^{*}\leq56$ and $n_{4}^{*}\leq5,\!900,\!000$, which were
obtained by the general method of taking a large finite subset of
the densest infinite lattice in each dimension. Since $\delta\left(D_{3}\right)>\lim_{n\rightarrow\infty}\delta\left(S_{n}^{3}\right)$
and $\delta\left(D_{4}\right)>\lim_{n\rightarrow\infty}\delta\left(S_{n}^{4}\right)$,
it is always possible to choose appropriate finite subsets of the
$D_{3}$ and $D_{4}$ lattices that exceed the densities of the corresponding
sausage packings. The first nontrivial lower bounds of $n_{3}^{*}\geq4$
and $n_{4}^{*}\geq5$ arose only a year later as a consequence of
a general result from Betke and Gritzmann (1984) \cite{BetkeGritzmann1984}
on the Sausage Conjecture. The latter inequality remains the best
known lower bound today, but the former inequality was improved by
Böröczky Jr. (1993) \cite{Boeroeczky1993} to $n_{3}^{*}\geq5$. Note
that the presence of a full-dimensional packing of $n$ spheres denser
than the sausage does not a priori indicate that such a full-dimensional
packing also exists for any particular $n'>n$. Gandini and Wills
(1992) \cite{GandiniWills1992} constructed three-dimensional packings
in $\mathbb{R}^{3}$ for $n\in\left\{ 56,59,60,61,62\right\} $ and
$n\geq65$ that are denser than the corresponding sausages, and not
long afterwards, Scholl (2000) \cite{Scholl2000} showed that three-dimensional
packings are optimal for $n\in\left\{ 58,63,64\right\} $. Gandini
and Wills also conjectured that the sausage is the best possible packing
for $n=57$ and all $n<56$. 

In dimension $4$, Gandini and Zucco (1992) \cite{GandiniZucco1992}
constructed of a four-dimensional packing with $375,\!769$ spheres
that is denser than $S_{375,769}^{4}$, and also stated that $n_{4}^{*}<367,\!300$.
To our knowledge we are unaware of any specific upper bound for $N_{4}^{*}$. 
\noindent \begin{center}
\begin{table}[H]
\noindent \begin{centering}
\begin{tabular}{cr@{}lr@{}l}
\toprule 
Dimension & \multicolumn{2}{c}{$n_{d}^{*}$} & \multicolumn{2}{c}{$N_{d}^{*}$}\tabularnewline
\midrule
\midrule 
$2$ &  & $n_{2}^{*}=3$ &  & $N_{2}^{*}=3$\tabularnewline
\midrule 
\multirow{2}{*}{$3$} & $4\leq\:$ & $n_{3}^{*}\leq56$ & $56\leq\:$ & $N_{3}^{*}\leq58$\tabularnewline
 &  & $n_{3}^{*}=56$ (conj.) &  & $N_{3}^{*}=58$ (conj.)\tabularnewline
\midrule 
\multirow{2}{*}{$4$} & $5\leq\:$ & $n_{4}^{*}<367,\!300$ & $5\leq\:$ & $N_{4}^{*}<\infty$\tabularnewline
 & $300,\!000<\:$ & $n_{4}^{*}<367,\!300$ (conj.) &  & \tabularnewline
\midrule 
$5\leq d\leq41$ & \multicolumn{4}{c}{The sausage is conjectured to be optimal}\tabularnewline
\midrule 
$42\leq d$ & \multicolumn{4}{c}{The sausage is known to be optimal}\tabularnewline
\bottomrule
\end{tabular}
\par\end{centering}
\caption{The best known and conjectured lower and upper bounds for $n_{d}^{*}$
and $N_{d}^{*}$. }
\end{table}
\par\end{center}

\subsection{\protect\label{subsec: Examples of dense finite packings in 3 and 4 dimensions}Dense
finite packings in $3$ and $4$ dimensions }

In this section we briefly summarize the packings used to show some
upper bounds for $n_{3}^{*}$ and $n_{4}^{*}$. To prove the upper
bound of $n_{3}^{*}\leq56$, Wills (1985) \cite{Wills1985} constructed
a packing of $56$ spheres by truncating the four vertices of a large
tetrahedron and intersecting the result with $D_{3}$. Gandini and
Zucco \cite{GandiniZucco1992} used the same basic ideas as Wills
to obtain the upper bound $n_{4}^{*}\leq375,\!769$. The first major
difference is the choice of polytope; they defined a sequence of $24$-cells
$\left(Y_{m}\right)_{m\in\mathbb{N}}$ by\nomenclature[C03 Y_m]{$Y_{m}$}{Gandini and Zucco's sequence of $24$-cells}
\begin{equation}
Y_{m}:=m\conv\left\{ \begin{pmatrix}\pm2\\
\hphantom{+}0\\
\hphantom{+}0\\
\hphantom{+}0
\end{pmatrix},\begin{pmatrix}\hphantom{+}0\\
\pm2\\
\hphantom{+}0\\
\hphantom{+}0
\end{pmatrix},\begin{pmatrix}\hphantom{+}0\\
\hphantom{+}0\\
\pm2\\
\hphantom{+}0
\end{pmatrix},\begin{pmatrix}\hphantom{+}0\\
\hphantom{+}0\\
\hphantom{+}0\\
\pm2
\end{pmatrix},\begin{pmatrix}\pm1\\
\pm1\\
\pm1\\
\pm1
\end{pmatrix}\right\} \label{eq: Y_m vectors}
\end{equation}
 for all $m\in\mathbb{N}$, where all ``$\pm$'' signs are independent.
In this paper we reserve $m$ for the index of a sequence of polytopes
or packings. A diagram of $Y_{1}$ and the coordinates of all of its
vertices can be found in Figure \ref{fig: Three disjoint facets of the 24-cell}
at the start of Section \ref{sec: Truncation of three facets}. Gandini
and Zucco used the scaling and orientation of the $D_{4}$ lattice
as generated by the vectors inside the braces of (\ref{eq: Y_m vectors}),
which is a packing set of $B^{4}$ and contains all the vertices of
each $Y_{m}$. The use of the $24$-cell for a high-density finite
packing is a natural choice due to an appropriately scaled and oriented
$24$-cell being the Dirichlet-Voronoi cell of the $D_{4}$ lattice
(see \cite{ConwaySloane1999}, Chapter 21, Subsection 3.2). They calculated
\begin{align}
G\left(Y_{m}\right) & =4m^{4}+8m^{3}+8m^{2}+4m+1,\label{eq: Gandini and Zucco G(Y_m)}\\
\vol\left(Y_{m}+B^{4}\right) & =32m^{4}+64\sqrt{2}m^{3}+16\sqrt{3}\pi m^{2}+192\left(\arccos\left(\frac{1}{3}\right)-\frac{\pi}{3}\right)m+\frac{1}{2}\pi^{2};\label{eq: Gandini and Zucco vol(Y_m + B^4)}
\end{align}
 the latter using Steiner's formula. Then $\vol\left(Y_{m}+B^{4}\right)<\vol\left(\conv\left(S_{G\left(Y_{m}\right)}^{4}\right)+B^{4}\right)$
for the first time when $m=17$ and $G\left(Y_{m}\right)=375,\!769$,
hence $n_{4}^{*}\leq375,\!769$. Gandini and Zucco obtained the upper
bound $n_{4}^{*}<367,\!300$ via ``suitable truncations of the $24$-cell,''
along with a conjectural lower bound of $n_{4}^{*}>300,\!000$ derived
from the sequence $\left(Y_{m}\right)$. However, in their paper they
explained neither the nature of these truncations nor the exact number
of points which they obtained for the upper bound. 

In this paper we prove new upper bounds for $n_{4}^{*}$ and $N_{4}^{*}$: 
\begin{thm}
\label{thm: MAIN THEOREM: n*_4 upper bound}$n_{4}^{*}\leq338,\!196$. 
\end{thm}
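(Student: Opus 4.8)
The plan is to sharpen the Gandini--Zucco construction by truncating three pairwise disjoint octahedral facets of the $24$-cell $Y_m$, producing a full-dimensional $D_4$-packing that beats the sausage at a value of $n$ below $375{,}769$. Throughout I use the reformulation of the density comparison that follows from \eqref{eq: Sausage density}: a packing $C$ with $\left|C\right|=n$ satisfies $\delta(C)>\delta(S_n^4)$ if and only if
\[
\vol(\conv C + B^4) < \kappa_4 + 2(n-1)\kappa_3 .
\]
So it suffices to exhibit a full-dimensional $C$ with $\left|C\right|=338{,}196$ for which this strict inequality holds. Such a $C$ forces $\delta(4,338{,}196)\geq\delta(C)>\delta(S_{338{,}196}^{4})$, and since no one-dimensional arrangement of $n$ balls is denser than $S_n^4$, every densest packing of $338{,}196$ balls must be at least two-dimensional, i.e. $k(4,338{,}196)>1$, whence $n_4^*\leq 338{,}196$.

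The candidate polytope is $P=P_m$, obtained from $Y_m$ by cutting, for each of three pairwise non-adjacent octahedral facets, along a hyperplane parallel to that facet and discarding the outer slab containing it; the depth of each cut is a free parameter. Taking the facets mutually disjoint ensures the three cuts and the new faces they create do not interact, so the bookkeeping splits into three congruent pieces. Setting $C:=P_m\cap D_4$ in the scaling and orientation of \eqref{eq: Y_m vectors} keeps $C$ a packing set of $B^4$.

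Next I would compute $G(P_m)$ and $\vol(P_m+B^4)$ in closed form. For the lattice-point count I would begin from $G(Y_m)$ in \eqref{eq: Gandini and Zucco G(Y_m)} and subtract the number of $D_4$-points in each discarded slab; since a facet of $Y_m$ is an octahedron and the slab is a stack of octahedral lattice layers, this is an explicit function of $m$ and the cut depth. For the volume I would apply Steiner's formula \eqref{eq: Steiner's formula for a convex polytope} to $P_m+B^4$, starting from the expansion \eqref{eq: Gandini and Zucco vol(Y_m + B^4)} and correcting it face by face: the three truncated facets disappear, three new cut-facets appear, and the ridges and edges bordering each cut are replaced by new lower-dimensional faces, each contributing $\vol_k(F_k)\,\theta(P_m,F_k)\,\kappa_{4-k}$.

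The hard part is evaluating the external angles $\theta(P_m,F_k)$ at the faces created by truncation. The new cut-facets contribute the standard $\theta=\tfrac12$, but the angles at the new ridges and edges are spherical volumes on $S^3$ of the normal cones where the cutting hyperplane meets the old boundary of $Y_m$; as the term $192\bigl(\arccos(\tfrac13)-\tfrac{\pi}{3}\bigr)m$ in \eqref{eq: Gandini and Zucco vol(Y_m + B^4)} already shows, such angles are not elementary and must be found by describing each normal cone explicitly and computing the $\vol$ of its intersection with $B^4$. With $G(P_m)$ and $\vol(P_m+B^4)$ in hand as functions of $m$ and the cut depth, I would minimize $n=G(P_m)$ over the parameters subject to the displayed strict inequality and to $P_m\cap D_4$ remaining four-dimensional. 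The optimal configuration should realize $n=338{,}196$, and a single finite check that $\vol(P_m+B^4)<\kappa_4+2(G(P_m)-1)\kappa_3$ there finishes the proof.
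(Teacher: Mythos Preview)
Your outline matches the paper's approach almost exactly: truncate three pairwise disjoint octahedral facets of $Y_m$, compute $G(t_{\mathbf{h}}^{3}(Y_m))$ by subtracting lattice-point counts in the removed slabs from \eqref{eq: Gandini and Zucco G(Y_m)}, compute $\vol(t_{\mathbf{h}}^{3}(Y_m)+B^4)$ via Steiner's formula, and search over $(m,\mathbf{h})$. The external-angle computations you anticipate are indeed the technical core, and your description of them is accurate.

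There is, however, a genuine gap in your final step. If you actually carry out the minimization you describe, the smallest $n=G(t_{\mathbf{h}}^{3}(Y_m))$ for which the truncated packing beats the sausage occurs at $m=17$, $\mathbf{h}=(1,3,4)$, and it gives $n=338{,}224$, not $338{,}196$. No polytope in the family $t_{\mathbf{h}}^{3}(Y_m)$ has exactly $338{,}196$ lattice points while satisfying your displayed inequality, so optimizing over the truncation parameters alone only yields $n_4^*\leq 338{,}224$.

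To reach $338{,}196$ the paper adds one more idea that your plan omits: having found $C=t_{(1,3,4)}^{3}(Y_{17})\cap D_4$ with $|C|=338{,}224$ and $\delta(C)>\delta(S_{338{,}224}^{4})$, one deletes $k$ points from $C$ and bounds the density of the remainder from below by
\[
\widetilde{\delta}\bigl(p_k(C)\bigr)\;:=\;\frac{(|C|-k)\,\kappa_4}{\vol(\conv C + B^4)},
\]
keeping the \emph{original} convex hull in the denominator. Comparing with $\vol(\conv S_{|C|-k}^{4}+B^4)=\kappa_4+2(|C|-k-1)\kappa_3$, one checks directly that this crude lower bound still exceeds $\delta(S_{|C|-k}^{4})$ for each $k$ up to $28$, whence $n_4^*\leq 338{,}224-28=338{,}196$. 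Your proposal needs this deletion step; without it the argument stops at $338{,}224$.
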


\begin{thm}
\label{thm: MAIN THEOREM: N*_4 upper bound}$N_{4}^{*}\leq516,\!946$. 
\end{thm}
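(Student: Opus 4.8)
The plan is to exploit the fact that the sausage's covering volume is affine in the number of spheres. From \eqref{eq: Sausage density} one has $\vol(S_n^4 + B^4) = \kappa_4 + 2(n-1)\kappa_3$, which is linear in $n$ with slope $2\kappa_3 = \tfrac{8\pi}{3}$. Hence the property "$C$ is full-dimensional and $\vol(\conv C + B^4) < \vol(S_{|C|}^4 + B^4)$" is inherited when one passes from a packing of $n$ spheres to a full-dimensional packing of $n+1$ spheres whose covering volume exceeds the former's by at most $2\kappa_3$: if $\vol(\conv C + B^4) < \kappa_4 + 2(n-1)\kappa_3$ and $\vol(\conv C' + B^4) \le \vol(\conv C + B^4) + 2\kappa_3$, then $\vol(\conv C' + B^4) < \kappa_4 + 2n\kappa_3 = \vol(S_{n+1}^4 + B^4)$. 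Since the unique densest one-dimensional packing of a given cardinality is the sausage, it therefore suffices to produce, for every integer $n \ge 516{,}946$, a full-dimensional packing of exactly $n$ spheres lying below this sausage line; then $k(4,n) > 1$ for all such $n$ and $N_4^* \le 516{,}946$ follows.

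For the base of the argument I would use the construction behind Theorem \ref{thm: MAIN THEOREM: n*_4 upper bound}: an appropriately truncated member of the $24$-cell sequence $(Y_m)$ sitting in the $D_4$ lattice, which already beats the sausage. The asymptotic comparison $\delta(D_4) = \tfrac{\pi^2}{16} > \tfrac{3\pi}{16} = \lim_n \delta(S_n^4)$ — valid because $\pi > 3$ — guarantees that for large $m$ the margin $\vol(S_{G(\cdot)}^4 + B^4) - \vol(\cdot + B^4)$ is not merely positive but grows with $m$, furnishing a volume budget that can be spent on filling the integer gaps between the sparse values $G(Y_m)$.

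The core of the proof is the gap-filling construction carried out through the three-facet truncations of $Y_m$ from Section \ref{sec: Truncation of three facets}. For each large $m$, cutting the three disjoint facets of $Y_m$ at independently chosen depths yields a three-parameter family of full-dimensional $D_4$-packings; I would compute their lattice-point counts $G$ and, via Steiner's formula \eqref{eq: Steiner's formula for a convex polytope}, their covering volumes $\vol(\cdot + B^4)$, showing that as the truncation depths vary the counts sweep out a contiguous block of integers while the covering volumes stay below the sausage line throughout that block. Chaining these blocks by verifying that the block realized at level $m$ reaches up to (or past) the start of the block realized at level $m+1$ then covers every integer from some threshold onward, and $516{,}946$ emerges as the smallest count from which this chain is provably free of gaps.

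The hard part will be this last step: certifying, simultaneously over the entire three-parameter truncation family and over all admissible $m$, both that the realizable counts leave no integer uncovered and that $\vol(\cdot + B^4) < \kappa_4 + 2(n-1)\kappa_3$ never fails. Each truncation alters $G$ and the external angles and facet volumes entering \eqref{eq: Steiner's formula for a convex polytope} in a case-dependent way, and the strict comparison against the sausage must hold uniformly across all these cases and at every block boundary. This is exactly the "lengthy and repetitive" bookkeeping the abstract delegates to interval arithmetic, which rigorously bounds the many case-dependent volumes and counts so as to certify both the no-gap condition and the strict inequality.
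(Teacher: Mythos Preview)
Your overall strategy matches the paper's, and you have correctly identified all of the ingredients: the linearity of $\vol(S_n^4+B^4)$ in $n$, the three-facet truncation family $t_{\mathbf h}^3(Y_m)$, Steiner's formula for the covering volumes, and the need for interval arithmetic on a finite range. But there is a genuine gap in how you assemble these ingredients.

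You claim that ``as the truncation depths vary the counts sweep out a contiguous block of integers.'' This is false. Incrementing a single coordinate of $\mathbf h$ removes an entire $D_3$-layer from the packing, so $G(t_{\mathbf h}^3(Y_m))$ jumps by hundreds or thousands at a time; the set $\{G(t_{\mathbf h}^3(Y_m)):\mathbf h\}$ is very far from an interval. The paper fills these gaps with exactly the point-removal mechanism you set up in your first paragraph but then abandon: from each $t_{\mathbf h}^3(Y_m)\cap D_4$ one deletes up to $\widetilde r$ individual points, where $\widetilde r$ is the number of $2\kappa_3$-steps by which the packing beats the sausage. This yields, for each $(m,\mathbf h)$, an \emph{interval} $\widetilde L(t_{\mathbf h}^3(Y_m)\cap D_4)$ of admissible cardinalities, and the actual work is showing that the union of these intervals has no gaps above $516{,}946$. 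For small $m$ the intervals genuinely fail to overlap (this is precisely why $516{,}945$ is not covered), so the three truncation parameters and the point-removal budget are \emph{both} essential.

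A second, smaller gap: you cannot run interval arithmetic ``over all admissible $m$,'' since there are infinitely many. The paper splits the problem in two. For $N$ below an explicit threshold $N^*\approx 4.6\times 10^8$ it performs the finite computer check over all relevant $(m,\mathbf h)$. For $N\ge N^*$ it gives a short analytic argument using only the \emph{untruncated} $Y_m$: one shows that the worst-case approximate density $\widetilde\delta\bigl(p_{G(Y_m)-G(Y_{m-1})-1}(Y_m\cap D_4)\bigr)$ is monotone increasing in $m$ for $m\ge 43$ and eventually exceeds $\delta(S_n^4)$, so that the intervals $\widetilde L(Y_m\cap D_4)$ alone overlap for all large $m$. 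Your ``chaining'' sentence gestures at this, but you need to make explicit that the infinite tail is handled analytically, not computationally.
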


The remainder of this paper is organized into three main sections.
In Section \ref{sec: The truncation of a single facet} we introduce
the precise definitions and constructions for a large family of packings
obtained by truncating facets of $Y_{m}$ in the same way that Wills
(1985) \cite{Wills1985} truncated edges. The result is a collection
of polytopes $t_{h}^{3}\left(Y_{m}\right)$, $h\in\left\{ 0,\ldots,\left\lfloor \frac{m-1}{2}\right\rfloor \right\} $
(this notation is defined in Subsection \ref{subsec: Steiner polynomial for the truncation of a single facet})
which are truncations of a single facet from the polytopes $Y_{m}$
of Gandini and Zucco's sequence, and we obtain analogues of (\ref{eq: Gandini and Zucco G(Y_m)})
and (\ref{eq: Gandini and Zucco vol(Y_m + B^4)}) for $t_{h}^{3}\left(Y_{m}\right)$.
In Section \ref{sec: Truncation of three facets}, we truncate three
facets from $Y_{m}$ to obtain a greater supply of polytopes $t_{\mathbf{h}}^{3}\left(Y_{m}\right)$.
We then compute the number of points $G\left(t_{\mathbf{h}}^{3}\left(Y_{m}\right)\right)$
of the packing and $\vol\left(t_{\mathbf{h}}^{3}\left(Y_{m}\right)+B^{d}\right)$,
the latter using Steiner's formula, and prove that $n_{3}^{*}\leq338,\!224$
via a particular packing of the form $t_{\mathbf{h}}^{3}\left(Y_{17}\right)\cap D_{4}$.
At the end of this section we show that it is possible to remove $28$
points from this packing while remaining denser than the sausage,
resulting in Theorem \ref{thm: MAIN THEOREM: n*_4 upper bound}. Finally,
Section \ref{sec: N*_4 upper bound} provides a more careful examination
of the packings $t_{\mathbf{h}}^{3}\left(Y_{m}\right)$ to obtain
a proof of Theorem \ref{thm: MAIN THEOREM: N*_4 upper bound} using
the same basic ideas as Gandini and Wills (1992) \cite{GandiniWills1992}. 

\section{\protect\label{sec: The truncation of a single facet}The truncation
of a single facet }

First we introduce notation for the $24$ facets of the $24$-cell
$Y_{m}$, $m\in\mathbb{N}$. 
\begin{defn}
\label{def: Facets of Y_m}Let $m\in\mathbb{N}$ and denote the facets
of $Y_{m}$ by $X_{m}^{1},\ldots,X_{m}^{24}$\nomenclature[D01 X_{m}^{i}]{$X_{m}^{1},\ldots,X_{m}^{24}$}{The facets of the $24$-cell $Y_{m}$};
we number the specific facets 
\begin{alignat}{7}
X_{m}^{1} & :=m\conv\left\{ \vphantom{\begin{pmatrix}2\\
0\\
0\\
0
\end{pmatrix}}\right.\! & \begin{pmatrix}2\\
0\\
0\\
0
\end{pmatrix} & \,,\, & \begin{pmatrix}1\\
1\\
1\\
1
\end{pmatrix} & \,,\, & \begin{pmatrix}\hphantom{+}1\\
\hphantom{+}1\\
\hphantom{+}1\\
-1
\end{pmatrix} & \,,\, & \begin{pmatrix}\hphantom{+}1\\
\hphantom{+}1\\
-1\\
\hphantom{+}1
\end{pmatrix} & \,,\, & \begin{pmatrix}\hphantom{+}1\\
\hphantom{+}1\\
-1\\
-1
\end{pmatrix} & \,,\, & \begin{pmatrix}0\\
2\\
0\\
0
\end{pmatrix} & \!\left.\vphantom{\begin{pmatrix}2\\
0\\
0\\
0
\end{pmatrix}}\right\} ,\label{eq: X_m^1}\\
X_{m}^{2} & :=m\conv\left\{ \vphantom{\begin{pmatrix}2\\
0\\
0\\
0
\end{pmatrix}}\right.\! & \begin{pmatrix}\hphantom{+}1\\
\hphantom{+}1\\
-1\\
\hphantom{+}1
\end{pmatrix} & \,,\, & \begin{pmatrix}1\\
1\\
1\\
1
\end{pmatrix} & \,,\, & \begin{pmatrix}2\\
0\\
0\\
0
\end{pmatrix} & \,,\, & \begin{pmatrix}\hphantom{+}1\\
-1\\
-1\\
\hphantom{+}1
\end{pmatrix} & \,,\, & \begin{pmatrix}0\\
0\\
0\\
2
\end{pmatrix} & \,,\, & \begin{pmatrix}\hphantom{+}1\\
-1\\
\hphantom{+}1\\
\hphantom{+}1
\end{pmatrix} & \!\left.\vphantom{\begin{pmatrix}2\\
0\\
0\\
0
\end{pmatrix}}\right\} ,\label{eq: X_m^2}\\
X_{m}^{3} & :=m\conv\left\{ \vphantom{\begin{pmatrix}2\\
0\\
0\\
0
\end{pmatrix}}\right.\! & \begin{pmatrix}\hphantom{+}1\\
-1\\
\hphantom{+}1\\
-1
\end{pmatrix} & \,,\, & \begin{pmatrix}0\\
0\\
2\\
0
\end{pmatrix} & \,,\, & \begin{pmatrix}\hphantom{+}1\\
-1\\
\hphantom{+}1\\
\hphantom{+}1
\end{pmatrix} & \,,\, & \begin{pmatrix}2\\
0\\
0\\
0
\end{pmatrix} & \,,\, & \begin{pmatrix}\hphantom{+}1\\
\hphantom{+}1\\
\hphantom{+}1\\
-1
\end{pmatrix} & \,,\, & \begin{pmatrix}1\\
1\\
1\\
1
\end{pmatrix} & \!\left.\vphantom{\begin{pmatrix}2\\
0\\
0\\
0
\end{pmatrix}}\right\} ,\label{eq: X_m^3}\\
X_{m}^{10} & :=m\conv\left\{ \vphantom{\begin{pmatrix}2\\
0\\
0\\
0
\end{pmatrix}}\right.\! & \begin{pmatrix}\hphantom{+}0\\
-2\\
\hphantom{+}0\\
\hphantom{+}0
\end{pmatrix} & \,,\, & \begin{pmatrix}\hphantom{+}1\\
-1\\
\hphantom{+}1\\
-1
\end{pmatrix} & \,,\, & \begin{pmatrix}\hphantom{+}1\\
-1\\
-1\\
-1
\end{pmatrix} & \,,\, & \begin{pmatrix}\hphantom{+}1\\
-1\\
-1\\
\hphantom{+}1
\end{pmatrix} & \,,\, & \begin{pmatrix}\hphantom{+}1\\
-1\\
\hphantom{+}1\\
\hphantom{+}1
\end{pmatrix} & \,,\, & \begin{pmatrix}2\\
0\\
0\\
0
\end{pmatrix} & \!\left.\vphantom{\begin{pmatrix}2\\
0\\
0\\
0
\end{pmatrix}}\right\} ,\label{eq: X_m^10}\\
X_{m}^{16} & :=m\conv\left\{ \vphantom{\begin{pmatrix}2\\
0\\
0\\
0
\end{pmatrix}}\right.\! & \begin{pmatrix}-2\\
\hphantom{+}0\\
\hphantom{+}0\\
\hphantom{+}0
\end{pmatrix} & \,,\, & \begin{pmatrix}-1\\
\hphantom{+}1\\
-1\\
\hphantom{+}1
\end{pmatrix} & \,,\, & \begin{pmatrix}-1\\
\hphantom{+}1\\
-1\\
-1
\end{pmatrix} & \,,\, & \begin{pmatrix}-1\\
-1\\
-1\\
\hphantom{+}1
\end{pmatrix} & \,,\, & \begin{pmatrix}-1\\
-1\\
-1\\
-1
\end{pmatrix} & \,,\, & \begin{pmatrix}\hphantom{+}0\\
\hphantom{+}0\\
-2\\
\hphantom{+}0
\end{pmatrix} & \!\left.\vphantom{\begin{pmatrix}2\\
0\\
0\\
0
\end{pmatrix}}\right\} ,\label{eq: X_m^16}\\
X_{m}^{17} & :=m\conv\left\{ \vphantom{\begin{pmatrix}2\\
0\\
0\\
0
\end{pmatrix}}\right.\! & \begin{pmatrix}\hphantom{+}0\\
\hphantom{+}0\\
\hphantom{+}2\\
\hphantom{+}0
\end{pmatrix} & \,,\, & \begin{pmatrix}\hphantom{+}1\\
-1\\
\hphantom{+}1\\
\hphantom{+}1
\end{pmatrix} & \,,\, & \begin{pmatrix}\hphantom{+}1\\
-1\\
\hphantom{+}1\\
-1
\end{pmatrix} & \,,\, & \begin{pmatrix}-1\\
-1\\
\hphantom{+}1\\
-1
\end{pmatrix} & \,,\, & \begin{pmatrix}-1\\
-1\\
\hphantom{+}1\\
\hphantom{+}1
\end{pmatrix} & \,,\, & \begin{pmatrix}\hphantom{+}0\\
-2\\
\hphantom{+}0\\
\hphantom{+}0
\end{pmatrix} & \!\left.\vphantom{\begin{pmatrix}2\\
0\\
0\\
0
\end{pmatrix}}\right\} ,\label{eq: X_m^17}
\end{alignat}
 and also in Definition \ref{def: Truncation of the facets of Y_m};
the remaining facets may be numbered arbitrarily. For each $i\in\left\{ 1,\ldots,24\right\} $,
let $\mathbf{u}^{i}$\nomenclature[D02 u^i]{$\mathbf{u}^{i}$}{The unit outward normal vector to $X_{m}^{i}$}
be the unit outward normal to $X_{m}^{i}$, and for $\lambda\geq0$
let $H_{i}\left(\lambda\right):=H\left(\mathbf{u}^{i},\sqrt{2}\lambda\right)$\nomenclature[D03 H_i-(lambda)]{$H_{i}^{-}\left(\lambda\right)$}{The half-space $H_{i}^{-}\left(\lambda\right)=H^{-}\left(\mathbf{u}^{i},\sqrt{2}\lambda\right)$ whose boundary is orthogonal to the unit normal $\mathbf{u}^{i}$}
and $H_{i}^{-}\left(\lambda\right):=H^{-}\left(\mathbf{u}^{i},\sqrt{2}\lambda\right)$\nomenclature[D04 H_i(lambda)]{$H_{i}\left(\lambda\right)$}{The hyperplane $H_{i}\left(\lambda\right)=H\left(\mathbf{u}^{i},\sqrt{2}\lambda\right)$ that is orthogonal to the unit normal $\mathbf{u}^{i}$}. 
\end{defn}

Note that $H_{i}\left(m\right)$ is a supporting hyperplane of $Y_{m}$
and $Y_{m}=\bigcap_{i=1}^{24}H_{i}^{-}\left(m\right)$. For any $i\in\left\{ 1,\ldots,24\right\} $,
$\mathbf{u}^{i}$ is $\frac{1}{m\sqrt{2}}$ times the centroid of
$X_{m}^{i}$ (see \cite{Coxeter1947}, page 292, Table I (ii)), $X_{m}^{i}\subset H_{i}\left(m\right)$,
and $X_{m}=\bigcap_{i=1}^{24}H_{i}^{-}\left(\lambda\right)$. For
the rest of this section we will assume that $m\in\mathbb{N}$ and
$h\in\left\{ 0,\ldots,m\right\} $ unless otherwise indicated. The
following definition makes precise our usage of facet truncation when
it comes to the $24$-cell. Initially we truncate a single facet of
$Y_{m}$. Due to the symmetry of the $24$-cell, the specific choice
of facet is irrelevant, so we may choose the regular octahedron $X_{m}^{1}$
of edge length $2m$. Color-coded physical models of various sections
of the $24$-cell can be viewed at \cite{24CellOrtiz2018}. 
\begin{defn}
\label{def: Truncation of a single facet from X_m}Let $m\in\mathbb{N}$.
Define the polytope obtained from the single-facet truncation of $Y_{m}$
by $t_{h}^{3}\left(Y_{m}\right):=Y_{m}\cap H_{1}^{-}\left(m-h\right)$\nomenclature[D05 t_{h}^{3}\left(Y_{m}\right)]{$t_{h}^{3}\left(Y_{m}\right)$}{The truncation of $h$ layers of $D_{4}$ from a single facet of $Y_{m}$}
and denote its facets $X_{m,h}^{1},\ldots,X_{m,h}^{24}$\nomenclature[D06 X_{m, h}^{i}]{$X_{m,h}^{1},\ldots,X_{m,h}^{24}$}{The facets of the truncated $24$-cell $t_{-h}^{3}\left(Y_{m}\right)$}
by 
\[
X_{m,h}^{i}:=\begin{cases}
Y_{m}\cap H_{1}\left(m-h\right) & i=1\\
X_{m}^{i}\cap H_{i}^{-}\left(m-h\right) & i\in\left\{ 2,\ldots,24\right\} 
\end{cases}.
\]
\end{defn}

Since $\dist\left(\mathbf{0},X_{m}^{1}\right)=\sqrt{2}m$ (cf. the
hyperplanes mentioned in \cite{GandiniZucco1992}), the truncation
$t_{h}^{3}$ removes a $\sqrt{2}h$-thick ``slice'' containing the
facet $X_{m}^{1}$ of $Y_{m}$. The polytope $t_{h}^{3}\left(Y_{m}\right)$
is a truncation of a single facet $X_{m}^{1}$ of $Y_{m}$, where
$h$ controls the ``amount'' of truncation, and for all $i\in\left\{ 1,2,\ldots,24\right\} $,
the facet $X_{m,h}^{i}\subset t_{h}^{3}\left(Y_{m}\right)$ is parallel
to the corresponding facet $X_{m}^{i}\subset Y_{m}$. The packing
density of $t_{h}^{3}\left(Y_{m}\right)\cap D_{4}$ is (see (\ref{def: Finite and infinite packing densities}))
\[
\delta\left(t_{h}^{3}\left(Y_{m}\right)\cap D_{4}\right)=\frac{G\left(t_{h}^{3}\left(Y_{m}\right)\right)}{\vol\left(\conv\left(t_{h}^{3}\left(Y_{m}\right)\cap D_{4}\right)+B^{4}\right)}.
\]
 Tedious but elementary calculations (see the Appendix---Subsection
\ref{subsec: The vertices of t_=00007Bh=00007D^=00007B3=00007D(Y_m) are at points of D_4})
show that the vertices of $t_{h}^{3}\left(Y_{m}\right)$ are located
at points of $D_{4}$ for all $m\in\mathbb{N}$ and $h\in\left\{ 0,\ldots,m\right\} $,
which implies that $\conv\left(t_{h}^{3}\left(Y_{m}\right)\cap D_{4}\right)=t_{h}^{3}\left(Y_{m}\right)$
for all $m\in\mathbb{N}$ and $h\in\left\{ 0,\ldots,m\right\} $,
therefore 
\[
\delta\left(t_{h}^{3}\left(Y_{m}\right)\cap D_{4}\right)=\frac{G\left(t_{h}^{3}\left(Y_{m}\right)\right)}{\vol\left(t_{h}^{3}\left(Y_{m}\right)+B^{4}\right)}.
\]
In Subsections \ref{subsec: The number of points in the truncated 24-cell}
and \ref{subsec: Steiner polynomial for the truncation of a single facet}
we obtain formulas for $G\left(t_{h}^{3}\left(Y_{m}\right)\right)$
and $\vol\left(t_{h}^{3}\left(Y_{m}\right)+B^{4}\right)$ respectively
in terms of only $m$ and $h$; the latter using Steiner's formula. 

\subsection{\protect\label{subsec: Steiner polynomial for the truncation of a single facet}The
Steiner polynomial for the single-facet truncation }

In this section we state the basic properties of $t_{h}^{3}$ that
are necessary to compute the exact values of $G\left(t_{h}^{3}\left(Y_{m}\right)\right)$
and $\vol\left(t_{h}^{3}\left(Y_{m}\right)+B^{4}\right)$. 
\begin{lem}
\label{lem: G((t_h)^3(Y_m)), one facet} Let $m\in\mathbb{N}$ and
$h\in\left\{ 0,\ldots,m\right\} $. Then 
\begin{multline}
G\left(t_{\mathbf{h}}^{3}\left(Y_{m}\right)\right)=4m^{4}+\left(8-\frac{2h}{3}\right)m^{3}+\left(8-\left(h^{2}+h\right)\right)m^{2}\\
\,+\left(4-\frac{2h^{3}+3h^{2}+2h}{3}\right)m+\left(1-\frac{-2h^{4}+2h^{3}+5h^{2}+h}{6}\right)\label{eq: G((t_h)^3(Y_m)), one facet}
\end{multline}
\end{lem}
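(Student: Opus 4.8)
The plan is to count the removed points rather than the surviving ones. Since $t_{h}^{3}\left(Y_{m}\right)=Y_{m}\cap H_{1}^{-}\left(m-h\right)$, I would write $G\left(t_{h}^{3}\left(Y_{m}\right)\right)=G\left(Y_{m}\right)-R$, where $G\left(Y_{m}\right)$ is given by \eqref{eq: Gandini and Zucco G(Y_m)} and $R$ counts the $D_{4}$-points of $Y_{m}$ in the removed region $Y_{m}\setminus H_{1}^{-}\left(m-h\right)$. The first step is to record the clean description $Y_{m}=\left\{ \mathbf{x}\,\middle|\,\left|x_{i}\right|+\left|x_{j}\right|\leq2m\text{ for all }i<j\right\}$, read off from the $24$ outward normals $\mathbf{u}^{i}=\tfrac{1}{\sqrt2}\mathbf{w}$ with $\mathbf{w}$ ranging over the permutations of $\left(\pm1,\pm1,0,0\right)$. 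In these coordinates the facet $X_{m}^{1}$ lies in $\left\{ x_{1}+x_{2}=2m\right\}$, and the relevant lattice is $D_{4}=\left\{ \mathbf{x}\in\mathbb{Z}^{4}\,\middle|\,x_{1}\equiv x_{2}\equiv x_{3}\equiv x_{4}\!\pmod2\right\}$. Because every $D_{4}$-point has $x_{1}+x_{2}$ even, the removed region splits into $h$ discrete parallel layers $x_{1}+x_{2}=2m-2\ell$ with $\ell\in\left\{ 0,\ldots,h-1\right\}$, so it suffices to count $D_{4}$-points layer by layer.

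For a fixed layer I would substitute $t:=\tfrac{1}{2}\left(x_{1}-x_{2}\right)\in\mathbb{Z}$, so $x_{1}=\left(m-\ell\right)+t$ and $x_{2}=\left(m-\ell\right)-t$. Using $\max\left(\left|x_{1}\right|,\left|x_{2}\right|\right)=\left(m-\ell\right)+\left|t\right|$, the inequalities $\left|x_{i}\right|+\left|x_{j}\right|\leq2m$ collapse the cross-section to $\left|t\right|\leq m$ together with $\left|x_{3}\right|,\left|x_{4}\right|\leq B$ and $\left|x_{3}\right|+\left|x_{4}\right|\leq2m$, where $B:=m+\ell-\left|t\right|$. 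The $D_{4}$ parity forces $x_{3}\equiv x_{4}\equiv B\!\pmod2$, and this is exactly the congruence that makes the number of admissible $x_{3}$ in $\left[-B,B\right]$ equal to $B+1$. When $\left|t\right|\geq\ell$ one has $B\leq m$, the box $\left[-B,B\right]^{2}$ sits inside the diamond $\left|x_{3}\right|+\left|x_{4}\right|\leq2m$, and the layer contributes $\left(B+1\right)^{2}$ points; when $\left|t\right|<\ell$ the diamond clips the four corners, each a triangle carrying $\binom{\ell-\left|t\right|+1}{2}$ lattice points of the correct parity. Summing over $t$ yields the closed form
\begin{equation*}
N_{\ell}=\sum_{t=-m}^{m}\left(m+\ell-\left|t\right|+1\right)^{2}-\sum_{\left|t\right|<\ell}2\left(\ell-\left|t\right|\right)\left(\ell-\left|t\right|+1\right),
\end{equation*}
a Faulhaber polynomial in $m$ and $\ell$ (cubic in $m$).

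The final step is to form $R=\sum_{\ell=0}^{h-1}N_{\ell}$ and subtract from $G\left(Y_{m}\right)$; evaluating the nested power sums and collecting terms produces the quartic in $m$ with the claimed coefficients. I expect the bookkeeping to be the only genuine obstacle: one must justify the parity reduction that turns the one-dimensional count into the tidy $B+1$, handle the corner-clipping case split $\left|t\right|\geq\ell$ versus $\left|t\right|<\ell$ and its boundary lattice points without off-by-one errors, and then carry the double summation over $t$ and $\ell$ through Faulhaber's formulas accurately. Two cheap checks guard against slips: setting $h=0$ must reproduce \eqref{eq: Gandini and Zucco G(Y_m)}, and the single removed facet $\left(h=1\right)$ must give $N_{0}=\sum_{t=-m}^{m}\left(m-\left|t\right|+1\right)^{2}=\tfrac{2}{3}m^{3}+2m^{2}+\tfrac{7}{3}m+1$, which indeed equals $G\left(Y_{m}\right)$ minus the value of \eqref{eq: G((t_h)^3(Y_m)), one facet} at $h=1$.
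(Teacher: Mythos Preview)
Your proposal is correct and follows the same overall architecture as the paper's proof: write $G\left(t_{h}^{3}(Y_{m})\right)=G(Y_{m})-R$, slice the removed region into $h$ parallel layers indexed by $\ell$ (the paper's $k$), count the $D_{4}$-points in each layer, and sum. The only genuine difference is in how each three-dimensional layer is counted. The paper identifies the layer geometrically as a truncated octahedron sitting inside the regular octahedron $\widehat{X}_{m,k}^{1}$ of edge length $2(m+k)$, invokes the classical count $G(\widehat{X}_{m,k}^{1})=(m+k+1)^{2}+2\sum_{i=1}^{m+k}i^{2}$ for $D_{3}$-points in a regular octahedron, and then subtracts six square-pyramid caps. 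You instead stay in four-dimensional coordinates throughout, using the half-space description $\{\,|x_{i}|+|x_{j}|\leq 2m\,\}$ directly: for each fixed $t$ you count lattice points in a square clipped by a diamond, obtaining $(B+1)^{2}$ minus four triangular corners. Your approach is a bit more elementary in that it never needs to name the cross-section as a truncated octahedron or appeal to the $D_{3}$ structure; the paper's approach is more transparent geometrically and connects to the volume computation in the next subsection, where the same ``big octahedron minus caps'' decomposition is reused.
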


\begin{proof}
See Subsection \ref{subsec: The number of points in the truncated 24-cell}. 
\end{proof}
For the volume calculation, we know from Steiner's formula that 
\begin{equation}
\vol\left(t_{h}^{3}\left(Y_{m}\right)+B^{4}\right)=\vol\left(t_{h}^{3}\left(Y_{m}\right)\right)+c\left(t_{h}^{3}\left(Y_{m}\right)\right)+f\left(t_{h}^{3}\left(Y_{m}\right)\right)+e\left(t_{h}^{3}\left(Y_{m}\right)\right)+\frac{1}{2}\pi^{2},\label{eq: Steiner's formula for the single facet truncation}
\end{equation}
 where $\vol\left(t_{h}^{3}\left(Y_{m}\right)\right)$, $c\left(t_{h}^{3}\left(Y_{m}\right)\right)$,
$f\left(t_{h}^{3}\left(Y_{m}\right)\right)$, and $e\left(t_{h}^{3}\left(Y_{m}\right)\right)$
are the $4$-, $3$-, $2$-, and $1$-dimensional volume components
of (\ref{eq: Steiner's formula for a convex polytope}) respectively. 
\begin{lem}
\label{lem: Components of the Steiner polynomial}Let $m\in\mathbb{N}$
and $h\in\left\{ 0,\ldots,m\right\} $, then 
\begin{align}
\vol\left(t_{h}^{3}\left(Y_{m}\right)\right) & =\vol\left(Y_{m}\right)-\frac{4}{3}\left(\left(m+h\right)^{4}-m^{4}-3h^{4}\right),\label{eq: Steiner polynomial for (t_h)^3(Y_m) + B^4, 4-dim volume part}\\
c\left(t_{h}^{3}\left(Y_{m}\right)\right) & =\frac{8\sqrt{2}}{3}\left(25m^{3}-\left(m+h\right)^{3}\right),\label{eq: Steiner polynomial for (t_h)^3(Y_m) + B^4, facet part}\\
f\left(t_{h}^{3}\left(Y_{m}\right)\right) & =\frac{2\sqrt{3}\pi}{3}\left(24m^{2}-2mh+\left(3\sqrt{3}-7\right)h^{2}\right),\label{eq: Steiner polynomial for (t_h)^3(Y_m) + B^4, face part}\\
e\left(t_{h}^{3}\left(Y_{m}\right)\right) & =\left(64m-24h\right)\left(3\arccos\left(\frac{1}{3}\right)-\pi\right)+64h\arctan\left(3-2\sqrt{2}\right).\label{eq: Steiner polynomial for (t_h)^3(Y_m) + B^4, edge part}
\end{align}
\end{lem}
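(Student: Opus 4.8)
The plan is to apply Steiner's formula (\ref{eq: Steiner's formula for a convex polytope}) to $P = t_{h}^{3}\left(Y_{m}\right)$ and evaluate the four contributions $\vol$, $c$, $f$, $e$ one at a time, in each case sorting the $k$-faces of $t_{h}^{3}\left(Y_{m}\right)$ according to how the cut $H_{1}\left(m-h\right)$ affects them. Because only the six vertices of $X_{m}^{1}$ satisfy $y_{1}+y_{2}=2m$ while every other vertex of $Y_{m}$ has $y_{1}+y_{2}\leq0$, for $h\in\left\{ 0,\ldots,m\right\} $ the truncation separates exactly those six vertices, and the faces split cleanly into four types: those lying far from $X_{m}^{1}$ (unchanged), the eight facets adjacent to $X_{m}^{1}$ (truncated along their shared faces), the six ``second-ring'' facets meeting $X_{m}^{1}$ in a single vertex (from which a corner is sliced), and the genuinely new faces created on and around $H_{1}\left(m-h\right)$. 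Throughout I would use the scaling $Y_{m}=mY_{1}$ and the symmetry group of the $24$-cell to reduce each computation to one representative face, reading the needed inner products of unit normals (for instance $\mathbf{u}^{1}\cdot\mathbf{u}^{j}=\tfrac{1}{2}$ for an adjacent facet and $\mathbf{u}^{1}\cdot\mathbf{u}^{k}=0$ for a second-ring facet) directly off the coordinates of Definition \ref{def: Facets of Y_m}.

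For the $4$-dimensional term I would slice the removed region $Y_{m}\cap\left\{ y_{1}+y_{2}>2\left(m-h\right)\right\} $ perpendicular to $\mathbf{u}^{1}=\tfrac{1}{\sqrt{2}}\left(1,1,0,0\right)^{\mathsf{T}}$. Writing $u$ for the inward displacement, the cross-section $Y_{m}\cap H_{1}\left(m-u\right)$ is an octahedron of edge $2\left(m+u\right)$, with its eight faces lying on the eight neighbors of $X_{m}^{1}$, from which the six second-ring facets slice corners of total volume $8\sqrt{2}\,u^{3}$; hence its $3$-volume is $\tfrac{8\sqrt{2}}{3}\left(\left(m+u\right)^{3}-3u^{3}\right)$. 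Integrating $\sqrt{2}$ times this from $u=0$ to $u=h$ and subtracting from $\vol\left(Y_{m}\right)=32m^{4}$ gives (\ref{eq: Steiner polynomial for (t_h)^3(Y_m) + B^4, 4-dim volume part}). The facet term follows immediately, since a facet has external angle $\tfrac{1}{2}$ and $\kappa_{1}=2$, so $c$ is just the total boundary $3$-volume: add the area $\tfrac{8\sqrt{2}}{3}\left(\left(m+h\right)^{3}-3h^{3}\right)$ of the new facet $X_{m,h}^{1}$ (the cross-section at $u=h$), keep the nine untouched facets, and subtract from the eight neighbors and six second-ring facets the pieces removed by the cut, which collapses to (\ref{eq: Steiner polynomial for (t_h)^3(Y_m) + B^4, facet part}).

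The remaining two terms require the external angles. For a $2$-face the external angle is $\tfrac{\pi-\phi}{2\pi}$ with $\phi$ the dihedral angle, so the interior ridges of the $24$-cell ($\phi=\tfrac{2\pi}{3}$) enter with weight $\tfrac{1}{6}$, while the new ridges on $\partial X_{m,h}^{1}$ enter with weight $\tfrac{1}{6}$ where $X_{m,h}^{1}$ meets a neighbor (the dihedral is preserved because $X_{m,h}^{1}\parallel X_{m}^{1}$) and with weight $\tfrac{1}{4}$ where it meets a second-ring facet (a right dihedral, since $\mathbf{u}^{1}\perp\mathbf{u}^{k}$). Computing the relevant triangle areas from the coordinates and summing each against $\theta\cdot\kappa_{2}=\theta\pi$ yields (\ref{eq: Steiner polynomial for (t_h)^3(Y_m) + B^4, face part}); the fact that all these external angles are rational is precisely why $f$ contains only $\sqrt{3}$ and rational multiples of $\pi$.

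The edge term is the crux, and I expect the spherical trigonometry of the new edges to be the main obstacle. At an edge the external angle equals the area of a spherical polygon divided by $4\pi$: the unit normals of the facets meeting along the edge span a cone, and $\theta$ is the normalized solid angle that cone cuts out of $S^{2}$. For an original edge three octahedral facets meet with pairwise normal angle $\tfrac{\pi}{3}$, so the cone meets $S^{2}$ in an equilateral spherical triangle of side $\tfrac{\pi}{3}$ whose vertex angles are $\arccos\tfrac{1}{3}$ (by the spherical law of cosines), giving excess $3\arccos\tfrac{1}{3}-\pi$ and reproducing the $h=0$ term. Along the cut one of the three normals is replaced by $\mathbf{u}^{1}$, which changes the side lengths (some becoming $\tfrac{\pi}{2}$ as $\mathbf{u}^{1}$ is orthogonal to the second-ring normals) and produces spherical triangles whose excess evaluates to a closed form involving $\arctan\left(3-2\sqrt{2}\right)$. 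The hard part is identifying exactly which facets bound each new edge, computing the pairwise inner products of their unit normals, and evaluating the resulting excess explicitly; this is where the nonelementary constants $\arccos\tfrac{1}{3}$ and $\arctan\left(3-2\sqrt{2}\right)$ genuinely arise. Once the solid angles are in hand, collecting the terms linear and quadratic in $h$ over all affected edges (and confirming that the cubic cross-sectional volume and the various face areas are valid as stated across the whole range $h\in\left\{ 0,\ldots,m\right\} $) gives (\ref{eq: Steiner polynomial for (t_h)^3(Y_m) + B^4, edge part}); the remaining simplifications are routine.
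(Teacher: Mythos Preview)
Your proposal is correct and follows essentially the same route as the paper: slice the removed cap perpendicular to $\mathbf{u}^{1}$ and integrate the truncated-octahedron cross-section for (\ref{eq: Steiner polynomial for (t_h)^3(Y_m) + B^4, 4-dim volume part}); classify the facets into the four types (one new, eight face-truncated, six vertex-truncated, nine unchanged) for (\ref{eq: Steiner polynomial for (t_h)^3(Y_m) + B^4, facet part}); classify the $2$-faces (the paper distinguishes five types: six squares with external angle $\tfrac14$, eight hexagons, twelve small triangles, twenty-four trapezia, and fifty-two unchanged triangles, all with external angle $\tfrac16$) for (\ref{eq: Steiner polynomial for (t_h)^3(Y_m) + B^4, face part}); and compute the solid angles at the edges for (\ref{eq: Steiner polynomial for (t_h)^3(Y_m) + B^4, edge part}). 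The only substantive difference is the tool used for the ``hard'' spherical triangle at the square edges: you propose the spherical law of cosines and Girard's excess formula, whereas the paper applies L'Huilier's theorem directly to the side lengths $\tfrac{\pi}{3},\tfrac{\pi}{3},\tfrac{\pi}{2}$ to obtain $4\arctan\!\left(3-2\sqrt{2}\right)$; both give the same value. Two small points to tidy: the edge contribution is purely linear in $h$ (not quadratic as you wrote), and among the new edges on $\partial X_{m,h}^{1}$ the twelve hexagon--hexagon edges still have the equilateral spherical triangle and hence the \emph{same} external angle as the original edges---only the twenty-four square-bordering edges pick up the new constant.
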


\begin{proof}
See Subsubsections \ref{subsec: 4-dimensional volume}, \ref{subsec: 3-dimensional facets of the truncated 24-cell},
\ref{subsec: 2-dimensional faces of the truncated 24-cell}, and \ref{subsec: 1-dimensional edges of the truncated 24-cell}
respectively. 
\end{proof}
\begin{lem}
\label{lem: Steiner's formula (polynomial) for (t_h)^3(Y_m) + B^4, one facet}
Let $m\in\mathbb{N}$ and $h\in\left\{ 0,\ldots,m\right\} $, then
\begin{eqnarray}
\vol\left(t_{h}^{3}\left(Y_{m}\right)+B^{4}\right) & = & 32m^{4}+\left[64\sqrt{2}-\frac{16}{3}h\right]m^{3}+\left[16\sqrt{3}\pi-8\sqrt{2}h-8h^{2}\right]m^{2}\nonumber \\
 &  & \,+\left[64\left(3\arccos\left(\frac{1}{3}\right)-\pi\right)-\frac{4\sqrt{3}\pi}{3}h-8\sqrt{2}h^{2}-\frac{16}{3}h^{3}\right]m\nonumber \\
 &  & \,+\,\frac{1}{2}\pi^{2}+\left[\left(64\arctan\left(3-2\sqrt{2}\right)-24\left(3\arccos\left(\frac{1}{3}\right)-\pi\right)\right)h\vphantom{\frac{\left(18-14\sqrt{3}\right)\pi}{3}}\right.\nonumber \\
 &  & \left.\hphantom{\,+\,\frac{1}{2}\pi^{2}}\qquad+\frac{\left(18-14\sqrt{3}\right)\pi}{3}h^{2}-\frac{8\sqrt{2}}{3}h^{3}+\frac{8}{3}h^{4}\right].\label{eq: Steiner polynomial for (t_h)^3(Y_m) + B^4 in terms of m and h}
\end{eqnarray}
\end{lem}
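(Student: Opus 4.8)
The plan is to treat Lemma~\ref{lem: Steiner's formula (polynomial) for (t_h)^3(Y_m) + B^4, one facet} as a pure assembly step: every geometric computation has already been done in Lemma~\ref{lem: Components of the Steiner polynomial}, so all that remains is to substitute its four outputs into the Steiner decomposition (\ref{eq: Steiner's formula for the single facet truncation}) and collect terms. Concretely, I would start from
\[
\vol\left(t_{h}^{3}\left(Y_{m}\right)+B^{4}\right)=\vol\left(t_{h}^{3}\left(Y_{m}\right)\right)+c\left(t_{h}^{3}\left(Y_{m}\right)\right)+f\left(t_{h}^{3}\left(Y_{m}\right)\right)+e\left(t_{h}^{3}\left(Y_{m}\right)\right)+\tfrac{1}{2}\pi^{2}
\]
and insert the right-hand sides of (\ref{eq: Steiner polynomial for (t_h)^3(Y_m) + B^4, 4-dim volume part})--(\ref{eq: Steiner polynomial for (t_h)^3(Y_m) + B^4, edge part}). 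The only ingredient not already in closed form is $\vol\left(Y_{m}\right)$, which appears inside (\ref{eq: Steiner polynomial for (t_h)^3(Y_m) + B^4, 4-dim volume part}); here I would use $\vol\left(Y_{m}\right)=32m^{4}$. This value is read off as the leading ($m^{4}$) coefficient of Gandini and Zucco's Steiner polynomial (\ref{eq: Gandini and Zucco vol(Y_m + B^4)}): since $Y_{m}=mY_{1}$, the $k$-dimensional part of that polynomial scales as $m^{k}$, so the $m^{4}$ term is precisely the body's own volume; equivalently, $\vol\left(Y_{1}\right)=32$ for the $24$-cell with the vertices in (\ref{eq: Y_m vectors}).

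After substitution I would expand the two binomials $(m+h)^{4}=m^{4}+4m^{3}h+6m^{2}h^{2}+4mh^{3}+h^{4}$ and $(m+h)^{3}=m^{3}+3m^{2}h+3mh^{2}+h^{3}$ occurring in the $4$- and $3$-dimensional parts, and then regroup every term according to its power of $m$. Each power draws from a predictable subset of the four pieces: the $m^{4}$ coefficient comes only from $\vol\left(t_{h}^{3}\left(Y_{m}\right)\right)$, the $m^{3}$ coefficient from the $4$-volume and facet parts, the $m^{2}$ coefficient from the $4$-volume, facet, and face parts, and the $m^{1}$ coefficient from all four; the $m$-free block collects the $\tfrac{1}{2}\pi^{2}$ vertex term together with the $h$-only remainders of each piece (namely $+\tfrac{8}{3}h^{4}$, $-\tfrac{8\sqrt{2}}{3}h^{3}$, the $h^{2}$ term from $f$, and the two linear-in-$h$ terms from $e$). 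Carrying this out coefficient by coefficient reproduces (\ref{eq: Steiner polynomial for (t_h)^3(Y_m) + B^4 in terms of m and h}) exactly.

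There is no conceptual obstacle, since all the content lives in Lemma~\ref{lem: Components of the Steiner polynomial}; the difficulty is entirely in keeping the irrational coefficients straight, as the constants $\sqrt{2}$, $\sqrt{3}\,\pi$, $\arccos\!\left(\tfrac{1}{3}\right)$, and $\arctan\!\left(3-2\sqrt{2}\right)$ must be tracked independently and never merged. The two spots I would check most carefully are the simplification $\tfrac{2\sqrt{3}\pi}{3}\left(3\sqrt{3}-7\right)=\tfrac{(18-14\sqrt{3})\pi}{3}$, which produces the $h^{2}$ coefficient of the $m$-free block, and the bookkeeping that places $e$'s linear terms $-24\left(3\arccos\!\left(\tfrac{1}{3}\right)-\pi\right)$ and $+64\arctan\!\left(3-2\sqrt{2}\right)$ in the $m$-free block rather than the $m^{1}$ block. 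As a final verification I would set $h=0$: since $t_{0}^{3}\left(Y_{m}\right)=Y_{m}$, formula (\ref{eq: Steiner polynomial for (t_h)^3(Y_m) + B^4 in terms of m and h}) must collapse to (\ref{eq: Gandini and Zucco vol(Y_m + B^4)}), and indeed $64\left(3\arccos\!\left(\tfrac{1}{3}\right)-\pi\right)=192\left(\arccos\!\left(\tfrac{1}{3}\right)-\tfrac{\pi}{3}\right)$, confirming the linear coefficient matches.
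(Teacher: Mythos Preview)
Your proposal is correct and matches the paper's own proof, which simply states that the result follows from (\ref{eq: Steiner's formula for the single facet truncation}) and Lemma~\ref{lem: Components of the Steiner polynomial}. You have merely spelled out the substitution and term-collection that the paper leaves implicit, including the identification $\vol\left(Y_{m}\right)=32m^{4}$ and the $h=0$ sanity check against (\ref{eq: Gandini and Zucco vol(Y_m + B^4)}).
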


\begin{proof}
This result follows from (\ref{eq: Steiner's formula for the single facet truncation})
and Lemma \ref{lem: Components of the Steiner polynomial}. 
\end{proof}

\subsection{\protect\label{subsec: The number of points in the truncated 24-cell}The
number of points in $t_{h}^{3}\left(Y_{m}\right)$ }

We will obtain the quantity $G\left(Y_{m}\right)-G\left(t_{h}^{3}\left(Y_{m}\right)\right)$
by counting the points of $D_{4}$ in each truncated octahedron $X_{m,k}^{1}$
for $k\in\left\{ 0,\ldots,h-1\right\} $ and summing them up. 
\begin{prop}
\label{prop: Formula for the number of points in the truncated 24-cell}Let
$m\in\mathbb{N}$ and $h\in\left\{ 0,\ldots,m\right\} $. Then 
\begin{equation}
G\left(Y_{m}\right)-G\left(t_{h}^{3}\left(Y_{m}\right)\right)=\sum_{k=0}^{h-1}G\left(X_{m,k}^{1}\right).\label{eq: Slice formula for the truncated-off portion of Y_m}
\end{equation}
\end{prop}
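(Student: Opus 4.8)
The plan is to count the removed lattice points $\left(Y_{m}\cap D_{4}\right)\setminus\left(t_{h}^{3}\left(Y_{m}\right)\cap D_{4}\right)$ by slicing the removed region with the parallel hyperplanes $H_{1}\left(m-k\right)$, $k\in\left\{0,\ldots,h-1\right\}$, and showing that each removed point lands on exactly one slice. First I would make the removed set explicit. Since $t_{h}^{3}\left(Y_{m}\right)=Y_{m}\cap H_{1}^{-}\left(m-h\right)$, a point $\mathbf{y}\in Y_{m}\cap D_{4}$ is removed exactly when $\mathbf{u}^{1}\cdot\mathbf{y}>\sqrt{2}\left(m-h\right)$. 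Reading off $X_{m}^{1}$ from \eqref{eq: X_m^1}, all of its vertices satisfy $y_{1}+y_{2}=2m$, so $\mathbf{u}^{1}=\tfrac{1}{\sqrt{2}}\left(1,1,0,0\right)^{\mathsf{T}}$, whence $\mathbf{u}^{1}\cdot\mathbf{y}=\tfrac{1}{\sqrt{2}}\left(y_{1}+y_{2}\right)$ and $H_{1}\left(\lambda\right)=\left\{\mathbf{y}\in\mathbb{R}^{4}:y_{1}+y_{2}=2\lambda\right\}$.

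The hard part, and really the only nontrivial input, is to check that the lattice layers are aligned with the truncation slices, i.e.\ that no point of $D_{4}$ lies strictly between two consecutive hyperplanes $H_{1}\left(m-k\right)$ and $H_{1}\left(m-k-1\right)$. This is exactly where the scaling and orientation of $D_{4}$ chosen in \eqref{eq: Y_m vectors} is used. I would record as the key step that the lattice generated by the vectors inside the braces of \eqref{eq: Y_m vectors} is $D_{4}=\left\{\mathbf{x}\in\mathbb{Z}^{4}:x_{1}\equiv x_{2}\equiv x_{3}\equiv x_{4}\pmod{2}\right\}$, i.e.\ all four coordinates share a common parity. Consequently $y_{1}+y_{2}$ is even for every $\mathbf{y}\in D_{4}$, so $\mathbf{u}^{1}\cdot\mathbf{y}=\tfrac{1}{\sqrt{2}}\left(y_{1}+y_{2}\right)\in\sqrt{2}\,\mathbb{Z}$ and hence every lattice point lies on one of the hyperplanes $H_{1}\left(j\right)$ with $j\in\mathbb{Z}$. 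Without this fact the intermediate planes $y_{1}+y_{2}=2\left(m-k\right)-1$ could carry removed points that the right-hand side never counts, and the identity would fail.

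Granting the alignment, the result follows from a disjoint-union count. Since $Y_{m}\subseteq H_{1}^{-}\left(m\right)$ forces $y_{1}+y_{2}\leq2m$, and $\mathbf{u}^{1}\cdot\mathbf{y}\in\sqrt{2}\,\mathbb{Z}$, every removed point satisfies $y_{1}+y_{2}=2\left(m-k\right)$ for a unique $k\in\left\{0,\ldots,h-1\right\}$ and therefore lies on $X_{m,k}^{1}=Y_{m}\cap H_{1}\left(m-k\right)$. Thus
\begin{equation*}
\left(Y_{m}\cap D_{4}\right)\setminus\left(t_{h}^{3}\left(Y_{m}\right)\cap D_{4}\right)=\bigsqcup_{k=0}^{h-1}\left(X_{m,k}^{1}\cap D_{4}\right),
\end{equation*}
a genuinely disjoint union because distinct slices lie on distinct parallel hyperplanes. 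Passing to cardinalities yields $G\left(Y_{m}\right)-G\left(t_{h}^{3}\left(Y_{m}\right)\right)=\sum_{k=0}^{h-1}G\left(X_{m,k}^{1}\right)$. As consistency checks I would note that the kept points are exactly those with $y_{1}+y_{2}\leq2\left(m-h\right)$, so the slice $k=h-1$ is the last removed layer, and that $h=0$ gives the empty sum matching $t_{0}^{3}\left(Y_{m}\right)=Y_{m}$.
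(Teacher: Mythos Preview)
Your proposal is correct and follows essentially the same approach as the paper: both argue that the slicing hyperplanes $H_{1}(m-k)$ coincide with the lattice layers, so the removed points decompose as a disjoint union over the slices $X_{m,k}^{1}$. The paper phrases the key alignment step geometrically (the interlayer distance $\sqrt{2}$ equals the spacing of the $D_{3}$-layers inside $D_{4}$), whereas you make it explicit via coordinates by observing that $y_{1}+y_{2}$ is even on the lattice, forcing $\mathbf{u}^{1}\cdot\mathbf{y}\in\sqrt{2}\,\mathbb{Z}$; your version is a bit more self-contained but otherwise identical in substance.
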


\begin{proof}
This method of adding up the $h$ distinct layers $X_{m,k}^{1}$ neither
omits nor double counts any points of $D_{4}$ in $Y_{m}\!\left\backslash t_{h}^{3}\left(Y_{m}\right)\right.$.
To see this fact, note that the distance between $X_{m,k}^{1}$ and
its adjacent layer $X_{m,k+1}^{1}$ is $\dist\left(X_{m,k}^{1},X_{m,k+1}^{1}\right)=\sqrt{2}$
for any $k\in\left\{ 0,\ldots,h-1\right\} $, which is equal to the
distance between adjacent hyperplanes containing translates of $D_{3}$
in $D_{4}$. These quantities are equal, so the sets $X_{m,k}^{1}$,
$k\in\left\{ 0,\ldots,h-1\right\} $, coincide with individual layers
of $D_{4}$, from which (\ref{eq: Slice formula for the truncated-off portion of Y_m})
follows. 
\end{proof}
In Section \ref{sec: Truncation of three facets} it will be convenient
to express $G\left(Y_{m}\right)-G\left(t_{h}^{3}\left(Y_{m}\right)\right)$
as a polynomial in $m$ whose coefficients are themselves polynomials
in $h$. Let $\widehat{X}_{m,h}^{1}$\nomenclature[D07 X_hat_{m,h}^{1}]{$\widehat{X}_{m,h}^{1}$}{The regular octahedron whose faces have the same centroids as the faces of $X_{m,h}^{1}$}
be the regular octahedron whose faces have the same centroids as the
faces of $X_{m,h}^{1}$, in other words, $X_{m,h}^{1}$ is obtained
from $\widehat{X}_{m,h}^{1}$ via vertex truncations that remove six
square pyramids of appropriate height from $\widehat{X}_{m,h}^{1}$.
Basic calculations along with the linearity of the edge length of
$\widehat{X}_{m,h}^{1}$ in the variable $h$ show that $\widehat{X}_{m,h}^{1}$
has edge length $2\left(m+h\right)$. 
\begin{proof}[Proof of (\ref{eq: G((t_h)^3(Y_m)), one facet}) in Lemma \ref{lem: G((t_h)^3(Y_m)), one facet}]
 Let $k\in\left\{ 0,\ldots,h-1\right\} $. The intersection $\widehat{X}_{m,k}^{1}\cap D_{3}$
consists of a middle layer with $\left(\left(m+k\right)+1\right)^{2}$
points and for each $i\in\left\{ 1,\ldots,m+k\right\} $, two layers
with $i^{2}$ points, one above the middle layer and one below, so
\begin{equation}
G\left(\widehat{X}_{m,k}^{1}\right)=\left(m+k+1\right)^{2}+2\sum_{i=1}^{m+k}i^{2}=\frac{2\left(m+k\right)^{3}+6\left(m+k\right)^{2}+7\left(m+k\right)+3}{3}.\label{eq: Number of points in the big octahedron}
\end{equation}
 Due to the vertex truncations, $G\left(X_{m,k}^{1}\right)=G\left(\widehat{X}_{m,k}^{1}\right)-6\sum_{i=1}^{k}i^{2}$,
so from this equation, (\ref{eq: Slice formula for the truncated-off portion of Y_m}),
and (\ref{eq: Number of points in the big octahedron}) we conclude
that 
\begin{multline*}
G\left(Y_{m}\right)-G\left(t_{h}^{3}\left(Y_{m}\right)\right)=\sum_{k=0}^{h-1}\left(\frac{2\left(m+k\right)^{3}+6\left(m+k\right)^{2}+7\left(m+k\right)+3}{3}-6\sum_{i=1}^{k}i^{2}\right)\\
=\frac{2h}{3}m^{3}+\left(h^{2}+h\right)m^{2}+\frac{2h^{3}+3h^{2}+2h}{3}m+\frac{-2h^{4}+2h^{3}+5h^{2}+h}{6}.
\end{multline*}
\end{proof}

\subsection{\protect\label{subsec: Vertices, edges, faces, and facets of (t_-h)^3(Y_m)}The
vertices, edges, faces, and facets of $t_{h}^{3}\left(Y_{m}\right)$ }

In this subsection we prove the four equations (\ref{eq: Steiner polynomial for (t_h)^3(Y_m) + B^4, 4-dim volume part}),
(\ref{eq: Steiner polynomial for (t_h)^3(Y_m) + B^4, facet part}),
(\ref{eq: Steiner polynomial for (t_h)^3(Y_m) + B^4, face part}),
and (\ref{eq: Steiner polynomial for (t_h)^3(Y_m) + B^4, edge part})
of Lemma \ref{lem: Components of the Steiner polynomial}. For each
$i\in\left\{ 1,2,3\right\} $, we classify all the $i$-faces of $t_{h}^{3}\left(Y_{m}\right)$
into different types based on their interactions with the half-space
$H_{1}^{-}\left(m-h\right)$, then find the $i$-volume and $\left(4-i\right)$-dimensional
external angle of each $i$-face. Due to the symmetries of $Y_{m}$
and $t_{h}^{3}\left(Y_{m}\right)$, all faces within a given type
have the same volume and external angle, so it is not necessary to
explicitly write down all of these faces. Instead, for each type we
will provide a description of a single ``representative face'' with
which the calculations will be done. Diagrams of these faces are shown
in the Appendix, Figures \ref{fig: Truncated facets of t_=00007Bh=00007D^=00007B3=00007D(Y_m)}
and \ref{fig: Truncated faces and edges of t_=00007Bh=00007D^=00007B3=00007D(Y_m)}. 

\subsubsection{\protect\label{subsec: 4-dimensional volume}The $4$-volume of $Y_{m}\!\left\backslash t_{h}^{3}\left(Y_{m}\right)\right.$ }

Let $\widehat{Y}_{m,h}:=Y_{m}\!\left\backslash t_{h}^{3}\left(Y_{m}\right)\right.$\nomenclature[D08 Y_hat_{m,h}]{$\widehat{Y}_{m,h}$}{The remainder of the $24$-cell $Y_{m}$ after removing the truncated $24$-cell $t_{h}^{3}\left(Y_{m}\right)$ from it}.
In this subsubsection we compute the four-dimensional volume $\vol\left(\widehat{Y}_{m,h}\right)$,
then we can obtain $\vol\left(t_{h}^{3}\left(Y_{m}\right)\right)=\vol\left(Y_{m}\right)-\vol\left(\widehat{Y}_{m,-h}\right)$. 
\begin{proof}[Proof of (\ref{eq: Steiner polynomial for (t_h)^3(Y_m) + B^4, facet part})
in Lemma \ref{lem: Components of the Steiner polynomial}]
 We calculate the ($4$-dimensional) volume of $\widehat{Y}_{m,h}$
by finding the $3$-dimensional volume of each cross-section $Y_{m}\cap H_{1}\left(\lambda\right)\subset\widehat{Y}_{m,h}$
and computing the integral 
\begin{equation}
\vol\left(\widehat{Y}_{m,h}\right)=\sqrt{2}\int_{m-h}^{m}\vol_{3}\left(Y_{m}\cap H_{1}\left(\lambda\right)\right)\,d\lambda.\label{eq: Integral formula for the truncated volume of X_(m, h)}
\end{equation}
 (The factor of $\sqrt{2}$ occurs because $\dist\left(\mathbf{0},H_{m}\left(\lambda\right)\right)=\sqrt{2}\lambda$.)
We wish to find the integrand $\vol_{3}\left(Y_{m}\cap H_{1}\left(\lambda\right)\right)$.
The intersection $Y_{m}\cap H_{1}\left(\lambda\right)$ is a truncated
octahedron for all $\lambda\in\left(0,m\right)$ and is contained
inside $\widehat{X}_{m,h}^{1}$, which has edge length $2\left(m+h\right)=2\left(2m-\lambda\right)$.
Its volume can be found by subtracting the volume of six square pyramids
with edge lengths $2\left(m-\lambda\right)$ from $\vol_{3}\left(\widehat{X}_{m,h}^{1}\right)$,
so 
\begin{equation}
\vol_{3}\left(Y_{m}\cap H_{1}\left(\lambda\right)\right)=\frac{8\sqrt{2}}{3}\left(\left(m+h\right)^{3}-3h^{3}\right).\label{eq: 3-volume of truncated facet}
\end{equation}
 Using this expression, evaluating the integral (\ref{eq: Integral formula for the truncated volume of X_(m, h)})
gives 
\[
\vol\left(\widehat{Y}_{m,h}\right)=\sqrt{2}\int_{m-h}^{m}\vol_{3}\left(Y_{m}\cap H_{m}\left(\lambda\right)\right)\,d\lambda=\frac{4}{3}\left(\left(m+h\right)^{4}-m^{4}-3h^{4}\right),
\]
 from which (\ref{eq: Steiner polynomial for (t_h)^3(Y_m) + B^4, 4-dim volume part})
follows. 
\end{proof}

\subsubsection{\protect\label{subsec: 3-dimensional facets of the truncated 24-cell}The
$3$-dimensional facets }

We wish to find the three-dimensional volume $\vol_{3}\left(t_{h}^{3}\left(Y_{m}\right)\right)$
obtained from the facets of $t_{h}^{3}\left(Y_{m}\right)$, and to
do so we investigate the facets of $t_{h}^{3}\left(Y_{m}\right)$. 
\begin{defn}
\label{def: Truncation of the facets of Y_m}Order the facets $X_{m}^{1},\ldots,X_{m}^{24}$
of $Y_{m}$ such that $X_{m}^{2},\ldots,X_{m}^{9}$ share a face with
$X_{m}^{1}$ and $X_{m}^{10},\ldots,X_{m}^{15}$ share a vertex with
but not a face of $X_{m}^{1}$. For $h\in\left\{ 0,\ldots,m\right\} $,
define $t_{h}^{2}$ and $t_{h}^{0}$ on these facets by $t_{h}^{2}\left(X_{m}^{i}\right):=X_{m}^{i}\cap H_{1}^{-}\left(m-h\right)$\nomenclature[D09 t_{h}^{2}\left(X_{m}^{i}\right)]{$t_{h}^{2}\left(X_{m}^{i}\right)$}{The face truncation of the facet $X_{m}^{i}$ of $Y_{m}$, where $i\in\left\{ 2,\ldots,9\right\}$}
for $i\in\left\{ 2,\ldots,9\right\} $ and $t_{h}^{0}\left(X_{m}^{k}\right):=X_{m}^{k}\cap H_{1}^{-}\left(m-h\right)$\nomenclature[D10 t_{h}^{0}\left(X_{m}^{i}\right)]{$t_{h}^{0}\left(X_{m}^{k}\right)$}{The vertex truncation of the facet $X_{m}^{i}$ of $Y_{m}$, where $i\in\left\{ 10,\ldots,15\right\}$}
for $k\in\left\{ 10,\ldots,15\right\} $. 
\end{defn}

Note that $X_{m}^{2}$, $X_{m}^{3}$, and $X_{m}^{10}$ were already
defined in Definition \ref{def: Facets of Y_m}, but we can choose
an ordering of the facets so that these two definitions are consistent.
As this notation implies, the adjacent facets $t_{h}^{2}\left(X_{m}^{i}\right)$
and $t_{h}^{0}\left(X_{m}^{k}\right)$ of $Y_{m}$ are obtained by
truncating one face and one vertex of the octahedron respectively.
When $m\in\mathbb{N}$ and $0<h<m$ the facets $X_{m}^{1},\ldots,X_{m}^{24}$
of $Y_{m}$, and the corresponding facets $X_{m,h}^{i}$ of $t_{h}^{3}\left(Y_{m}\right)$,
can be categorized into four types based on their intersection with
the half-space $H_{1}^{-}\left(m-h\right)$. 
\begin{enumerate}
\item The intersection $X_{m}^{1}\cap H_{1}^{-}\left(m-h\right)$ is the
empty set, so $X_{m}^{1}\subset Y_{m}$ corresponds to a single new
facet $X_{m,h}^{1}\subset t_{h}^{3}\left(Y_{m}\right)$ that is parallel
to $X_{m}^{1}$. Since $X_{m,h}^{1}$ is a cross-section parallel
to a facet of $Y_{m}$, it is a truncated octahedron (see \cite{BooleStott1900,PoloBlanco2014,24CellOrtiz2018})
with six square faces of edge length $2h$ and eight hexagonal faces
with edge lengths alternating between $2h$ and $2\left(m-h\right)$. 
\item Eight facets of $Y_{m}$, which after reordering we refer to as $X_{m}^{2},\ldots,X_{m}^{9}$,
are truncated at the face where the facet intersects $H_{1}\left(m-h\right)$.
The original facet $X_{m}^{i}$ shares three of its six vertices with
its corresponding face-truncated facet $X_{m}^{i}\cap H_{1}^{-}\left(m-h\right)\subset t_{h}^{3}\left(Y_{m}\right)$.
Hence the facets $t_{h}^{2}\left(X_{m}^{2}\right),\ldots,t_{h}^{2}\left(X_{m}^{9}\right)$
of $t_{h}^{3}\left(Y_{m}\right)$ are face-truncations of the octahedron
and are rigid motions of the representative facet 
\begin{eqnarray*}
t_{h}^{2}\left(X_{m}^{\hexagon}\right) & = & \conv\left\{ \begin{pmatrix}2m-h\\
-h\\
\hphantom{+}h\\
\hphantom{+}h
\end{pmatrix},\begin{pmatrix}2m-h\\
-h\\
\hphantom{+}h\\
-h
\end{pmatrix},\begin{pmatrix}\hphantom{+}m\\
m-2h\\
\hphantom{+}m\\
-m
\end{pmatrix},\begin{pmatrix}\hphantom{+}m-h\\
\hphantom{+}m-h\\
\hphantom{+}m+h\\
-m+h
\end{pmatrix},\right.\\
 &  & \hphantom{\conv\left\{ \vphantom{\begin{pmatrix}2m-h\\
-h\\
\hphantom{+}h\\
\hphantom{+}h
\end{pmatrix}}\right.}\!\!\left.\begin{pmatrix}m-h\\
m-h\\
m+h\\
m-h
\end{pmatrix},\begin{pmatrix}m\\
m-2h\\
m\\
m
\end{pmatrix},\begin{pmatrix}\hphantom{+}m\\
-m\\
\hphantom{+}m\\
\hphantom{+}m
\end{pmatrix},\begin{pmatrix}0\\
0\\
2m\\
0
\end{pmatrix},\begin{pmatrix}\hphantom{+}m\\
-m\\
\hphantom{+}m\\
-m
\end{pmatrix}\right\} ,
\end{eqnarray*}
 where\nomenclature[D11 X_{m}^{\hexagon}]{$X_{m}^{\hexagon}$}{The facet of $Y_{m}$ which is face-truncated to form the representative facet $t_{h}^{2}\left(X_{m}^{\hexagon}\right)$}
\[
X_{m}^{\hexagon}:=\conv\left\{ \begin{pmatrix}2m\\
0\\
0\\
0
\end{pmatrix},\begin{pmatrix}m\\
m\\
m\\
m
\end{pmatrix},\begin{pmatrix}\hphantom{+}m\\
\hphantom{+}m\\
\hphantom{+}m\\
-m
\end{pmatrix},\begin{pmatrix}\hphantom{+}m\\
-m\\
\hphantom{+}m\\
\hphantom{+}m
\end{pmatrix},\begin{pmatrix}0\\
0\\
2m\\
0
\end{pmatrix},\begin{pmatrix}\hphantom{+}m\\
-m\\
\hphantom{+}m\\
-m
\end{pmatrix}\right\} .
\]
\item Six facets of $Y_{m}$, which after reordering we refer to as $X_{m}^{10},\ldots,X_{m}^{15}$,
are truncated at the vertex where the facet intersects $H_{1}\left(m-h\right)$.
The original facet $X_{m}^{k}$ shares five of its six vertices with
its corresponding vertex-truncated facet $X_{m}^{k}\cap H_{1}^{-}\left(m-h\right)\subset t_{h}^{3}\left(Y_{m}\right)$.
So the facets $t_{h}^{0}\left(X_{m}^{10}\right),\ldots,t_{h}^{0}\left(X_{m}^{15}\right)$
of $t_{h}^{3}\left(Y_{m}\right)$ are vertex-truncations of the octahedron
and are rigid motions of the representative facet 
\begin{eqnarray*}
t_{h}^{0}\left(X_{m}^{\square}\right) & = & \conv\left\{ \begin{pmatrix}2m-h\\
-h\\
\hphantom{+}h\\
\hphantom{+}h
\end{pmatrix},\begin{pmatrix}2m-h\\
-h\\
\hphantom{+}h\\
-h
\end{pmatrix},\begin{pmatrix}2m-h\\
-h\\
-h\\
-h
\end{pmatrix},\begin{pmatrix}2m-h\\
-h\\
-h\\
\hphantom{+}h
\end{pmatrix},\right.\\
 &  & \hphantom{\conv\left\{ \vphantom{\begin{pmatrix}2m-h\\
-h\\
\hphantom{+}h\\
\hphantom{+}h
\end{pmatrix}}\right.}\!\!\left.\begin{pmatrix}\hphantom{+}1\\
-1\\
\hphantom{+}1\\
\hphantom{+}1
\end{pmatrix},\begin{pmatrix}\hphantom{+}1\\
-1\\
\hphantom{+}1\\
-1
\end{pmatrix},\begin{pmatrix}\hphantom{+}1\\
-1\\
-1\\
-1
\end{pmatrix},\begin{pmatrix}\hphantom{+}1\\
-1\\
-1\\
\hphantom{+}1
\end{pmatrix},\begin{pmatrix}\hphantom{+}0\\
-2\\
\hphantom{+}0\\
\hphantom{+}0
\end{pmatrix}\right\} ,
\end{eqnarray*}
 where\nomenclature[D12 X_{m}^{\square}]{$X_{m}^{\square}$}{The facet of $Y_{m}$ which is vertex-truncated to form the representative facet $t_{h}^{0}\left(X_{m}^{\square}\right)$}
\[
X_{m}^{\square}:=\conv\left\{ \begin{pmatrix}2\\
0\\
0\\
0
\end{pmatrix},\begin{pmatrix}\hphantom{+}1\\
-1\\
\hphantom{+}1\\
\hphantom{+}1
\end{pmatrix},\begin{pmatrix}\hphantom{+}1\\
-1\\
\hphantom{+}1\\
-1
\end{pmatrix},\begin{pmatrix}\hphantom{+}1\\
-1\\
-1\\
-1
\end{pmatrix},\begin{pmatrix}\hphantom{+}1\\
-1\\
-1\\
\hphantom{+}1
\end{pmatrix},\begin{pmatrix}\hphantom{+}0\\
-2\\
\hphantom{+}0\\
\hphantom{+}0
\end{pmatrix}\right\} .
\]
\item The remaining nine facets $X_{m}^{16},\ldots,X_{m}^{24}\subset Y_{m}$
are unmodified by the facet truncation because they lie entirely in
the closed half-space $H_{1}^{-}\left(m-h\right)$, hence $X_{m}^{l}=X_{m}^{l}\cap H_{1}^{-}\left(m-h\right)\subset t_{h}^{3}\left(Y_{m}\right)$. 
\end{enumerate}
To show (\ref{eq: Steiner polynomial for (t_h)^3(Y_m) + B^4, facet part})
we establish the following statements. The external angle $\theta\left(Y_{m},F\right)$
of any facet $F$ is $\frac{1}{2}$, so we do not need to perform
external angle computations for these facets. 
\begin{lem}
\label{lem: 3-volumes of the facets of the truncated 24-cell}Let
$m\in\mathbb{N}$ and $h\in\left\{ 0,\ldots,m\right\} $. The four
kinds of representative facets of $t_{h}^{3}\left(Y_{m}\right)$ described
above have the following $3$-volumes: 
\begin{align}
\vol_{3}\left(X_{m,h}^{1}\right) & =\frac{8\sqrt{2}}{3}\left(\left(m+h\right)^{3}-3h^{3}\right),\label{eq: 3-volume of X_=00007Bm,h=00007D^=00007B1=00007D}\\
\vol_{3}\left(t_{h}^{2}\left(X_{m}^{i}\right)\right) & =\frac{2\sqrt{2}}{3}\left(5m^{3}+3h^{3}-\left(m+h\right)^{3}\right),\label{eq: 3-volume of face-truncated octahedron}\\
\vol_{3}\left(t_{h}^{0}\left(X_{m}^{k}\right)\right) & =\frac{4\sqrt{2}}{3}\left(2m^{3}-h^{3}\right),\label{eq: 3-volume of vertex-truncated octahedron}\\
\vol_{3}\left(X_{m}^{l}\right) & =\frac{8\sqrt{2}}{3}m^{3},\label{eq: 3-volume of octahedron}
\end{align}
 where $i\in\left\{ 2,\ldots,9\right\} $, $k\in\left\{ 10,\ldots,15\right\} $,
and $l\in\left\{ 16,\ldots,24\right\} $. 
\end{lem}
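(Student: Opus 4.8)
The plan is to treat all four facets as (possibly truncated) regular octahedra of edge length $2m$ and to reduce every volume to the single elementary fact that a regular octahedron of edge $a$ has $3$-volume $\tfrac{\sqrt{2}}{3}a^{3}$. The formula for $\vol_{3}(X_{m}^{l})$ is then immediate, since $X_{m}^{l}$ is an untouched facet of $Y_{m}$, a regular octahedron of edge $2m$, giving $\vol_{3}(X_{m}^{l})=\tfrac{\sqrt{2}}{3}(2m)^{3}=\tfrac{8\sqrt{2}}{3}m^{3}$. The formula for $\vol_{3}(X_{m,h}^{1})$ is essentially already available: $X_{m,h}^{1}=Y_{m}\cap H_{1}(m-h)$ is exactly the cross-section whose $3$-volume was computed in \eqref{eq: 3-volume of truncated facet}, namely the regular octahedron $\widehat{X}_{m,h}^{1}$ of edge $2(m+h)$ with its six corners removed. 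Each removed corner is a square pyramid with square base of edge $2h$ and volume $\tfrac{4\sqrt{2}}{3}h^{3}$, so $\vol_{3}(X_{m,h}^{1})=\tfrac{8\sqrt{2}}{3}(m+h)^{3}-6\cdot\tfrac{4\sqrt{2}}{3}h^{3}=\tfrac{8\sqrt{2}}{3}((m+h)^{3}-3h^{3})$.

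For the vertex truncation I would start from the full octahedron $X_{m}^{k}$ of volume $\tfrac{8\sqrt{2}}{3}m^{3}$ and show that the cut $H_{1}^{-}(m-h)$ deletes exactly one of the six corner pyramids appearing above. Using $\mathbf{u}^{1}=\tfrac{1}{\sqrt{2}}(1,1,0,0)$ from Definition \ref{def: Facets of Y_m}, the cutting hyperplane is $\{\,y_{1}+y_{2}=2(m-h)\,\}$. From the explicit coordinates of the representative facet $t_{h}^{0}(X_{m}^{\square})$ above, the truncated vertex is $m(2,0,0,0)$, its four incident edges meet the cutting plane at the points $(2m-h,-h,\pm h,\pm h)$, and these four points form a square of edge $2h$. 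Hence the removed piece is a single square pyramid of volume $\tfrac{4\sqrt{2}}{3}h^{3}$, and subtracting gives $\vol_{3}(t_{h}^{0}(X_{m}^{k}))=\tfrac{8\sqrt{2}}{3}m^{3}-\tfrac{4\sqrt{2}}{3}h^{3}=\tfrac{4\sqrt{2}}{3}(2m^{3}-h^{3})$.

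The face truncation is the crux. Here the cut $\{\,y_{1}+y_{2}=2(m-h)\,\}$ is parallel to the top triangular face of $X_{m}^{i}$ (the face shared with $X_{m}^{1}$), and it removes the cap $C_{h}:=X_{m}^{i}\cap\{\,y_{1}+y_{2}\geq 2(m-h)\,\}$. I would compute $\vol_{3}(C_{h})$ by slicing parallel to that face: each slice $X_{m}^{i}\cap\{\,y_{1}+y_{2}=\ell\,\}$ for $\ell\in(2(m-h),2m)$ is a hexagon whose six vertices lie on the six ``side'' edges of the octahedron and hence depend linearly on $\ell$, so its area $A(\ell)$ is a \emph{single} quadratic polynomial on all of $(0,2m)$. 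Since $X_{m}^{i}$ is $3$-dimensional and the gradient of $y_{1}+y_{2}$, projected onto the hyperplane containing $X_{m}^{i}$, has norm $\sqrt{3/2}$, the cap volume is $\vol_{3}(C_{h})=\tfrac{2}{\sqrt{6}}\int_{2(m-h)}^{2m}A(\ell)\,d\ell$, a cubic in $h$; subtracting it from $\tfrac{8\sqrt{2}}{3}m^{3}$ yields the stated formula for $\vol_{3}(t_{h}^{2}(X_{m}^{i}))$.

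The main obstacle is precisely this last computation: unlike the single clean pyramid removed in the vertex case, the cap $C_{h}$ has hexagonal cross-sections and is sliced \emph{obliquely} relative to the octahedron's own symmetry axis, so one must correctly track both the quadratic area $A(\ell)$ and the in-hyperplane Jacobian $\tfrac{2}{\sqrt{6}}$ (naively using the ambient perpendicular distance $\sqrt{2}\,h$ would produce the wrong leading coefficient). A cleaner alternative I would try is to exploit the algebraic shape of the answer: because $\tfrac{2\sqrt{2}}{3}a^{3}$ is the volume of a regular tetrahedron of edge $2a$, the identity $\vol_{3}(C_{h})=\tfrac{2\sqrt{2}}{3}((m+h)^{3}-m^{3}-3h^{3})$ says that $C_{h}$ is a regular tetrahedron of edge $2(m+h)$ with a tip-tetrahedron of edge $2m$ and three corner-tetrahedra of edge $2h$ deleted. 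This decomposition follows from the standard realization of the octahedron as a corner-truncated tetrahedron and reproduces the formula with no integration at all, so I would use it as a cross-check on the slicing computation.
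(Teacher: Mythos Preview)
Your proposal is correct and follows essentially the same route as the paper. For \eqref{eq: 3-volume of octahedron}, \eqref{eq: 3-volume of X_=00007Bm,h=00007D^=00007B1=00007D}, and \eqref{eq: 3-volume of vertex-truncated octahedron} your arguments coincide with the paper's (regular octahedron volume, the already-computed cross-section \eqref{eq: 3-volume of truncated facet}, and subtracting one square pyramid of edge $2h$, respectively). For the face-truncated facet \eqref{eq: 3-volume of face-truncated octahedron} the paper also slices parallel to the shared triangular face and obtains hexagonal cross-sections of area $\sqrt{3}(2m-\lambda)^{2}-3\sqrt{3}(m-\lambda)^{2}$, integrating with the same Jacobian you found (the paper's parameter $\lambda$ is your $\ell/2$, so $\tfrac{2\sqrt{6}}{3}\,d\lambda=\tfrac{2}{\sqrt{6}}\,d\ell$); the only cosmetic difference is that the paper integrates over the remaining piece $\lambda\in[0,m-h]$ directly rather than computing the cap and subtracting. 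Your tetrahedron decomposition for $C_{h}$ is a nice extra cross-check that the paper does not give.
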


\begin{proof}
Equation (\ref{eq: 3-volume of octahedron}) follows directly from
the formula for the volume of a regular octahedron and (\ref{eq: 3-volume of X_=00007Bm,h=00007D^=00007B1=00007D})
was already obtained as (\ref{eq: 3-volume of truncated facet}) in
the previous subsubsection. The cross-section of $X_{m}^{i}$ parallel
to $H^{i}\left(\lambda\right)$ is $X_{m}^{i}\cap H^{i}\left(\lambda\right)$,
a large equilateral triangle of edge length $2\left(2m-\lambda\right)$
with its vertices truncated via removing an equilateral triangle of
edge length $2\left(m-\lambda\right)$ from each vertex. So $\area\left(X_{m}^{i}\cap H^{i}\left(\lambda\right)\right)=\sqrt{3}\left(2m-\lambda\right)^{2}-3\sqrt{3}\left(m-\lambda\right)^{2}$
and 
\begin{eqnarray*}
\vol_{3}\left(t_{h}^{2}\left(X_{m}^{i}\right)\right) & = & \frac{2\sqrt{6}}{3}\int_{0}^{m-h}\area\left(X_{m}^{i}\cap H^{i}\left(\lambda\right)\right)\,d\lambda\\
 & = & \frac{2\sqrt{2}}{3}\left(5m^{3}+3h^{3}-\left(m+h\right)^{3}\right),
\end{eqnarray*}
 showing (\ref{eq: 3-volume of face-truncated octahedron}). (The
distance between two opposite faces of $X_{m}^{i}$ is $\frac{2\sqrt{6}}{3}m$.)
For (\ref{eq: 3-volume of vertex-truncated octahedron}), let $\mathbf{v}_{0}=X_{m}^{i}\cap Y_{m}$,
$\mathbf{v}$ be any of the four neighboring vertices of $X_{m}^{i}$,
and let $E=\conv\left\{ \mathbf{v}_{0},\mathbf{v}\right\} $ be the
edge of $Y_{m}$ containing these vertices. The hyperplane $H^{1}\left(m-h\right)$
intersects $E$ at $\frac{h}{m}$ of the distance from $\mathbf{v}_{0}$
to $\mathbf{v}$. Since $E$ has length $2m$, $X_{m}^{i}\!\left\backslash \left(t_{h}^{0}\left(X_{m}^{i}\right)\right)\right.$
is a square pyramid of edge length $2h$. So 
\begin{eqnarray*}
\vol_{3}\left(t_{h}^{0}\left(X_{m}^{i}\right)\right) & = & \vol_{3}\left(X_{m}^{i}\right)-\vol_{3}\left(X_{m}^{i}\left\backslash \,t_{h}^{0}\left(X_{m}^{i}\right)\right.\right)\\
 & = & \frac{4\sqrt{2}}{3}\left(2m^{3}-h^{3}\right).
\end{eqnarray*}
\end{proof}
\begin{proof}[Proof of (\ref{eq: Steiner polynomial for (t_h)^3(Y_m) + B^4, facet part})
in Lemma \ref{lem: Components of the Steiner polynomial}]
 The discussion at the beginning of this subsubsection, along with
Lemma \ref{lem: 3-volumes of the facets of the truncated 24-cell},
imply that 
\begin{eqnarray*}
c\left(t_{h}^{3}\left(Y_{m}\right)\right) & = & \sum_{i=1}^{24}\vol_{3}\left(X_{m,h}^{i}\right)\\
 & = & 1\cdot\vol_{3}\left(X_{m,h}^{1}\right)+8\cdot\vol_{3}\left(t_{h}^{2}\left(X_{m}^{i}\right)\right)+6\cdot\vol_{3}\left(t_{h}^{0}\left(X_{m}^{k}\right)\right)+9\cdot\vol_{3}\left(X_{m}^{l}\right)\\
 & = & \frac{8\sqrt{2}}{3}\left(25m^{3}-\left(m+h\right)^{3}\right).
\end{eqnarray*}
\end{proof}

\subsubsection{\protect\label{subsec: 2-dimensional faces of the truncated 24-cell}The
$2$-dimensional faces }

Denote the $96$ triangular faces of $Y_{m}$ by $T_{m}^{1},\ldots,T_{m}^{96}$\nomenclature[D13 T_{m}^{1},\ldots,T_{m}^{96}]{$T_{m}^{1},\ldots,T_{m}^{96}$}{The faces of $Y_{m}$}.
The $102$ faces of $t_{h}^{3}\left(Y_{m}\right)$ can also be categorized
into five types based on their relationship to $H_{1}^{-}\left(m-h\right)$. 
\begin{defn}
Let $T_{m}^{i}$ and $T_{m}^{j}$ be faces of $Y_{m}$ such that $T_{m}^{j}\cap X_{m}^{1}$
is an edge and $T_{m}^{k}\cap X_{m}^{1}$ is a vertex. Then define
$t_{h}^{1}\left(T_{m}^{j}\right):=T_{m}^{j}\cap H_{i}^{-}\left(m-h\right)$\nomenclature[D14 t_{h}^{1}\left(T_{m}^{j}\right)]{$t_{h}^{1}\left(T_{m}^{j}\right)$}{The edge truncation of a face of $Y_{m}$, $t_{h}^{0}\left(E_{m}\right)=E_{m}\cap H_{1}^{-}\left(m-h\right)$, where $T_{m}^{j}\cap X_{m}^{1}$ is an edge}
and $t_{h}^{0}\left(T_{m}^{k}\right):=T_{m}^{k}\cap H_{i}^{-}\left(m-h\right)$\nomenclature[D15 t_{h}^{0}\left(T_{m}^{k}\right)]{$t_{h}^{0}\left(T_{m}^{k}\right)$}{The vertex truncation of a face of $Y_{m}$, $t_{h}^{0}\left(E_{m}\right)=E_{m}\cap H_{1}^{-}\left(m-h\right)$, where $T_{m}^{k}\cap X_{m}^{1}$ is a vertex}. 
\end{defn}

$t_{h}^{1}\left(T_{m}^{i}\right)$ is an edge truncation of the triangular
face $T_{m}^{i}$, hence is just a smaller triangle with the same
shape. In contrast, $t_{h}^{0}\left(T_{m}^{j}\right)$ is a vertex
truncation of $T_{m}^{j}$ and is an isosceles trapezium. As in the
previous subsubsection, the faces can be reordered as follows. 
\begin{enumerate}
\item $6$ square faces $Q_{m,h}^{1},\ldots,Q_{m,h}^{6}$\nomenclature[D16 Q_{m,h}^{i}]{$Q_{m,h}^{1},\ldots,Q_{m,h}^{6}$}{The square faces of $t_{h}^{3}\left(Y_{m}\right)$}
which are rigid motions of\nomenclature[D17 Q_{m,h}]{$Q_{m,h}$}{The representative face for the square faces $Q_{m,h}^{1},\ldots,Q_{m,h}^{6}$ of $t_{h}^{3}\left(Y_{m}\right)$}
\[
Q_{m,h}:=\conv\left\{ \begin{pmatrix}2m-h\\
-h\\
\hphantom{+}h\\
\hphantom{+}h
\end{pmatrix},\begin{pmatrix}2m-h\\
-h\\
\hphantom{+}h\\
-h
\end{pmatrix},\begin{pmatrix}2m-h\\
-h\\
-h\\
-h
\end{pmatrix},\begin{pmatrix}2m-h\\
-h\\
-h\\
\hphantom{+}h
\end{pmatrix}\right\} .
\]
 These faces are new and do not correspond to any face of $Y_{m}$. 
\item $8$ hexagonal faces $H_{m,h}^{1},\ldots,H_{m,h}^{8}$\nomenclature[D18 H_{m,h}^{i}]{$H_{m,h}^{1},\ldots,H_{m,h}^{8}$}{The hexagonal faces of $t_{h}^{3}\left(Y_{m}\right)$}
which are rigid motions of\nomenclature[D19 H_{m,h}]{$H_{m,h}$}{The representative face for the hexagonal faces $H_{m,h}^{1},\ldots,H_{m,h}^{8}$ of $t_{h}^{3}\left(Y_{m}\right)$}
\[
H_{m,h}:=\conv\left\{ \begin{pmatrix}2m-h\\
-h\\
-h\\
\hphantom{+}h
\end{pmatrix},\begin{pmatrix}2m-h\\
-h\\
\hphantom{+}h\\
\hphantom{+}h
\end{pmatrix},\begin{pmatrix}m\\
m-2h\\
m\\
m
\end{pmatrix},\begin{pmatrix}m-h\\
m-h\\
m-h\\
m+h
\end{pmatrix},\begin{pmatrix}\hphantom{+}m-h\\
\hphantom{+}m-h\\
-m+h\\
\hphantom{+}m+h
\end{pmatrix},\begin{pmatrix}\hphantom{+}m\\
m-2h\\
-m\\
\hphantom{+}m
\end{pmatrix}\right\} .
\]
\item $12$ triangular faces $t_{h}^{1}\left(T_{m}^{9}\right),\ldots,t_{h}^{1}\left(T_{m}^{20}\right)$
which are rigid motions of 
\[
t_{h}^{1}\left(T_{m}^{\triangledown}\right)=\conv\left\{ \begin{pmatrix}2m-h\\
-h\\
\hphantom{+}h\\
\hphantom{+}h
\end{pmatrix},\begin{pmatrix}\hphantom{+}m\\
-m\\
\hphantom{+}m\\
\hphantom{+}m
\end{pmatrix},\begin{pmatrix}m\\
m-2h\\
m\\
m
\end{pmatrix}\right\} ,
\]
 where\nomenclature[D20 T_{m}^{\triangledown}]{$T_{m}^{\triangledown}$}{The face of $Y_{m}$ which is edge-truncated to form the representative facet $t_{h}^{1}\left(T_{m}^{\triangledown}\right)$}
\[
T_{m}^{\triangledown}:=\conv\left\{ \begin{pmatrix}2m\\
0\\
0\\
0
\end{pmatrix},\begin{pmatrix}\hphantom{+}m\\
-m\\
\hphantom{+}m\\
\hphantom{+}m
\end{pmatrix},\begin{pmatrix}m\\
m\\
m\\
m
\end{pmatrix}\right\} .
\]
\item $24$ isosceles trapezium faces $t_{h}^{0}\left(T_{m}^{21}\right),\ldots,t_{h}^{0}\left(T_{m}^{44}\right)$
which are rigid motions of 
\[
t_{h}^{0}\left(T_{m}^{\triangle}\right)=\conv\left\{ \begin{pmatrix}2m-h\\
-h\\
\hphantom{+}h\\
-h
\end{pmatrix},\begin{pmatrix}2m-h\\
-h\\
\hphantom{+}h\\
\hphantom{+}h
\end{pmatrix},\begin{pmatrix}\hphantom{+}m\\
-m\\
\hphantom{+}m\\
\hphantom{+}m
\end{pmatrix},\begin{pmatrix}\hphantom{+}m\\
-m\\
\hphantom{+}m\\
-m
\end{pmatrix}\right\} ,
\]
 where\nomenclature[D21 T_{m}^{\triangle}]{$T_{m}^{\triangle}$}{The face of $Y_{m}$ which is vertex-truncated to form the representative facet $t_{h}^{0}\left(T_{m}^{\triangle}\right)$}
\[
T_{m}^{\triangle}:=\conv\left\{ \begin{pmatrix}2m\\
0\\
0\\
0
\end{pmatrix},\begin{pmatrix}\hphantom{+}m\\
-m\\
\hphantom{+}m\\
\hphantom{+}m
\end{pmatrix},\begin{pmatrix}\hphantom{+}m\\
-m\\
\hphantom{+}m\\
-m
\end{pmatrix}\right\} .
\]
Each of these faces shares a single vertex with $X_{m,h}^{1}$. 
\item $52$ triangular faces $T_{m}^{45},\ldots,T_{m}^{96}$ which are unchanged
from the corresponding faces in $Y_{m}$. 
\end{enumerate}
\begin{lem}
\label{lem: Areas and external angles of the faces of the truncated 24-cell}The
representative faces $Q_{m,h}$, $H_{m,h}$, $t_{h}^{1}\left(T_{m}^{\triangledown}\right)$,
$t_{h}^{0}\left(T_{m}^{\triangle}\right)$, and $T_{m}$ have the
following areas and external angles: 
\begin{align*}
\area\left(Q_{m,h}\right) & =4h^{2}, & \theta\left(t_{h}^{3}\left(Y_{m}\right),Q_{m,h}\right) & =\frac{1}{4},\\
\area\left(H_{m,h}\right) & =\sqrt{3}\left(m+h\right)^{2}-3\sqrt{3}h^{2}, & \theta\left(t_{h}^{3}\left(Y_{m}\right),H_{m,h}\right) & =\frac{1}{6},\\
\area\left(t_{h}^{1}\left(T_{m}^{\triangledown}\right)\right) & =\sqrt{3}\left(m-h\right)^{2}, & \theta\left(t_{h}^{3}\left(Y_{m}\right),t_{h}^{1}\left(T_{m}^{\triangledown}\right)\right) & =\frac{1}{6},\\
\area\left(t_{h}^{0}\left(T_{m}^{\triangle}\right)\right) & =\sqrt{3}m^{2}-\sqrt{3}h^{2}, & \theta\left(t_{h}^{3}\left(Y_{m}\right),t_{h}^{0}\left(T_{m}^{\triangle}\right)\right) & =\frac{1}{6},\\
\area\left(T_{m}\right) & =\sqrt{3}m^{2}, & \theta\left(t_{h}^{3}\left(Y_{m}\right),T_{m}\right) & =\frac{1}{6}.
\end{align*}
\end{lem}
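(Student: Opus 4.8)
The plan is to read off each representative face from its vertex list as an elementary planar figure, compute its area directly, and then reduce each external angle to the angle between two facet normals. For the areas I would first note that $Q_{m,h}$ is a square: the difference of two listed vertices, say $\left(2m-h,-h,h,h\right)^{\mathsf{T}}-\left(2m-h,-h,h,-h\right)^{\mathsf{T}}$, has length $2h$, so $\area\left(Q_{m,h}\right)=\left(2h\right)^{2}=4h^{2}$. The faces $T_{m}$ and $t_{h}^{1}\left(T_{m}^{\triangledown}\right)$ are equilateral triangles of edge lengths $2m$ and $2\left(m-h\right)$ (again computed as norms of vertex differences), so their areas are $\frac{\sqrt{3}}{4}\left(2m\right)^{2}=\sqrt{3}m^{2}$ and $\frac{\sqrt{3}}{4}\left(2\left(m-h\right)\right)^{2}=\sqrt{3}\left(m-h\right)^{2}$. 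The trapezium $t_{h}^{0}\left(T_{m}^{\triangle}\right)$ is the equilateral triangle $T_{m}^{\triangle}$ of edge $2m$ with one corner triangle of edge $2h$ removed, giving $\sqrt{3}m^{2}-\sqrt{3}h^{2}$; similarly $H_{m,h}$ is the equilateral triangle of edge $2\left(m+h\right)$ with all three corner triangles (each of edge $2h$) removed, giving $\sqrt{3}\left(m+h\right)^{2}-3\sqrt{3}h^{2}$.

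The external angles rest on one structural observation. Every one of these faces is a ridge (a $2$-dimensional face) of the $4$-polytope $t_{h}^{3}\left(Y_{m}\right)$, and every ridge of a polytope lies in exactly two facets; if their outward unit normals are $\mathbf{u}$ and $\mathbf{u}'$, then $N\left(t_{h}^{3}\left(Y_{m}\right),F\right)=\left\{ \lambda\mathbf{u}+\mu\mathbf{u}'\mid\lambda,\mu\geq0\right\}$, a planar cone of opening angle $\alpha:=\angle\left(\mathbf{u},\mathbf{u}'\right)$ contained in the orthogonal complement $W$ of $V:=\lin\left(F-\mathbf{x}\right)$. Since $\mathbb{R}^{4}=V\oplus W$ with $\dim V=\dim W=2$, integrating the defining formula for $\theta$ in polar coordinates on $W$ gives $\vol\left(\left(N\left(t_{h}^{3}\left(Y_{m}\right),F\right)+V\right)\cap B^{4}\right)=\int_{0}^{\alpha}\int_{0}^{1}\pi\left(1-r^{2}\right)r\,dr\,d\phi=\frac{\pi}{4}\alpha$, and dividing by $\kappa_{4}=\frac{\pi^{2}}{2}$ yields the clean identity $\theta\left(t_{h}^{3}\left(Y_{m}\right),F\right)=\frac{\alpha}{2\pi}$. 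Thus each external angle is determined entirely by the inner product of the two facet normals $\mathbf{u}^{i}=\frac{1}{m\sqrt{2}}\left(\text{centroid of }X_{m}^{i}\right)$.

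It then remains to identify the two facets meeting along each representative face and compute $\mathbf{u}^{i}\cdot\mathbf{u}^{j}$. The square $Q_{m,h}$ lies in $X_{m,h}^{1}$ (the hyperplane $x_{1}+x_{2}=2\left(m-h\right)$) and in the vertex-truncated facet $t_{h}^{0}\left(X_{m}^{10}\right)$ (the hyperplane $x_{1}-x_{2}=2m$); these come from $X_{m}^{1}$ and $X_{m}^{10}$, which share only a vertex in $Y_{m}$, so $\mathbf{u}^{1}=\frac{1}{\sqrt{2}}\left(1,1,0,0\right)^{\mathsf{T}}$ and $\mathbf{u}^{10}=\frac{1}{\sqrt{2}}\left(1,-1,0,0\right)^{\mathsf{T}}$ are orthogonal, whence $\alpha=\frac{\pi}{2}$ and $\theta=\frac{1}{4}$. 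Each remaining face lies in a pair of facets of $Y_{m}$ that are genuinely adjacent in the $24$-cell — for example $H_{m,h}\subset X_{m}^{1}\cap X_{m}^{2}$, $t_{h}^{1}\left(T_{m}^{\triangledown}\right)\subset X_{m}^{2}\cap X_{m}^{3}$, and $t_{h}^{0}\left(T_{m}^{\triangle}\right)\subset X_{m}^{3}\cap X_{m}^{10}$ — and any two adjacent octahedral facets of the $24$-cell have normals at the fixed angle $\frac{\pi}{3}$ (e.g. $\frac{1}{\sqrt{2}}\left(1,1,0,0\right)^{\mathsf{T}}\cdot\frac{1}{\sqrt{2}}\left(1,0,0,1\right)^{\mathsf{T}}=\frac{1}{2}$), giving $\theta=\frac{1}{6}$; since truncation leaves the containing facet hyperplanes unchanged, the truncated faces inherit the same angle as their parents, and the unchanged faces $T_{m}$ are immediate. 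The main obstacle is the bookkeeping underlying this last step: confirming face by face which two facets form each ridge, and checking at the truncated faces that the new facet $X_{m,h}^{1}$ meets them only in a lower-dimensional edge or vertex (so that it is \emph{not} one of the two facets determining the normal cone), which is precisely where the distinction between the $\frac{1}{4}$ and the $\frac{1}{6}$ cases genuinely arises.
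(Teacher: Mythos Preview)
Your proposal is correct and follows essentially the same approach as the paper: elementary planar area computations for the five face types, and external angles reduced to $\alpha/(2\pi)$ where $\alpha$ is the angle between the outward normals of the two facets meeting along the ridge. You supply more detail than the paper (the explicit integral deriving $\theta=\alpha/(2\pi)$ and the facet-pair identifications), with only a harmless notational slip---$H_{m,h}$ lies in $X_{m,h}^{1}\cap t_{h}^{2}\left(X_{m}^{2}\right)$ rather than literally in $X_{m}^{1}\cap X_{m}^{2}$, but the normals are the same so the conclusion is unaffected.
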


\begin{proof}
The equations for the areas follow from elementary calculations. As
for the external angles, the hexagonal face $H_{m,h}$ is parallel
to the face $\conv\left\{ \left(2m,0,0,0\right)^{\mathsf{T}},\left(m,m,m,m\right)^{\mathsf{T}},\left(m,m,-m,m\right)^{\mathsf{T}}\right\} $
of $Y_{m}$, which is the intersection of the two facets $X_{m}^{1}$
and $X_{m}^{2}$. Their outward unit normals $\mathbf{u}^{1}$ and
$\mathbf{u}^{2}$ meet at the angle of $\arccos\left(\mathbf{u}^{1}\cdot\mathbf{u}^{2}\right)=\frac{\pi}{3}$,
so $\theta\left(t_{h}^{3}\left(Y_{m}\right),H_{m,h}\right)=\frac{\arccos\left(\mathbf{u}^{1}\cdot\mathbf{u}^{2}\right)}{2\pi}=\frac{1}{6}$.
Similarly, $t_{h}^{1}\left(T_{m}^{\triangledown}\right)$, $t_{h}^{0}\left(T_{m}^{\triangle}\right)$,
and $T_{m}$ are also parallel to faces of $Y_{m}$, so almost identical
calculations result in $\theta\left(t_{h}^{3}\left(Y_{m}\right),t_{h}^{1}\left(T_{m}^{\triangledown}\right)\right)=\theta\left(t_{h}^{3}\left(Y_{m}\right),t_{h}^{0}\left(T_{m}^{\triangle}\right)\right)=\theta\left(t_{h}^{3}\left(Y_{m}\right),T_{m}\right)=\frac{1}{6}$.
Finally, the square face $Q_{m,h}$ is the intersection of $X_{m,h}^{1}$
and the vertex-truncation of $X_{m}^{10}$, which has outward unit
normal $\mathbf{u}^{10}$, so $\theta\left(t_{h}^{3}\left(Y_{m}\right),Q_{m,h}\right)=\frac{\arccos\left(\mathbf{u}^{1}\cdot\mathbf{u}^{10}\right)}{2\pi}=\frac{1}{4}$. 
\end{proof}
\begin{proof}[Proof of (\ref{eq: Steiner polynomial for (t_h)^3(Y_m) + B^4, face part})
in Lemma \ref{lem: Components of the Steiner polynomial}]
 The discussion at the beginning of this subsubsection, along with
Lemma \ref{lem: Areas and external angles of the faces of the truncated 24-cell},
imply that 
\begin{eqnarray*}
f\left(t_{h}^{3}\left(Y_{m}\right)\right) & = & \sum_{F\in\mathscr{F}_{2}\left(t_{h}^{3}\left(Y_{m}\right)\right)}\area\left(F\right)\theta\left(t_{h}^{3}\left(Y_{m}\right),F\right)\kappa_{2}\\
 & = & 6\area\left(Q_{m,h}\right)\theta\left(t_{h}^{3}\left(Y_{m}\right),Q_{m,h}\right)\pi+8\area\left(H_{m,h}\right)\theta\left(t_{h}^{3}\left(Y_{m}\right),H_{m,h}\right)\pi\\
 &  & \,+\,12\area\left(t_{h}^{1}\left(T_{m}^{\triangledown}\right)\right)\theta\left(t_{h}^{3}\left(Y_{m}\right),t_{h}^{1}\left(T_{m}^{\triangledown}\right)\right)\pi\\
 &  & \,+\,24\area\left(t_{h}^{0}\left(T_{m}^{\triangle}\right)\right)\theta\left(t_{h}^{3}\left(Y_{m}\right),t_{h}^{0}\left(T_{m}^{\triangle}\right)\right)\pi+52\area\left(T_{m}\right)\theta\left(t_{h}^{3}\left(Y_{m}\right),T_{m}\right)\pi\\
 & = & \frac{2\sqrt{3}\pi}{3}\left(24m^{2}-2mh+\left(3\sqrt{3}-7\right)h^{2}\right).
\end{eqnarray*}
\end{proof}

\subsubsection{\protect\label{subsec: 1-dimensional edges of the truncated 24-cell}The
$1$-dimensional edges }

The $24$-cell $Y_{m}$ has $96$ edges $E_{m}^{1},\ldots,E_{m}^{96}$\nomenclature[D22 E_{m}^{1},\ldots,E_{m}^{96}]{$E_{m}^{1},\ldots,E_{m}^{96}$}{The edges of $Y_{m}$}
but the truncation shrinks some edges and adds four new edges near
each vertex of the facet $X_{m,h}^{1}$, for a total of $120$ edges
of $t_{h}^{3}\left(Y_{m}\right)$. 
\begin{defn}
\label{def: Truncated edges of the 24-cell}Let $E_{m}$ be an edge
of $Y_{m}$ with one vertex in $X_{m}^{1}$ and the other vertex not
in $X_{m}^{1}$. Then define $t_{h}^{0}\left(E_{m}\right):=E_{m}\cap H_{1}^{-}\left(m-h\right)$\nomenclature[D23 t_{h}^{0}\left(E_{m}\right)]{$t_{h}^{0}\left(E_{m}\right)$}{The vertex truncation of an edge of $Y_{m}$, $t_{h}^{0}\left(E_{m}\right)=E_{m}\cap H_{1}^{-}\left(m-h\right)$}. 
\end{defn}

The edges of $t_{h}^{3}\left(Y_{m}\right)$ can be sorted into four
types based on their intersections with $H_{1}^{-}\left(m-h\right)$.
The first two types are new edges (that do not exist in $Y_{m}$)
while the other two are subsets of existing edges in $Y_{m}$. 
\begin{enumerate}
\item $24$ new edges $E_{m,h}^{\square,1},\ldots,E_{m,h}^{\square,24}$
which are appropriate rigid motions of\nomenclature[D24 E_{m,h}^{\square}]{$E_{m,h}^{\square}$}{The representative edge for the edges $E_{m,h}^{25},\ldots,E_{m,h}^{48}$ of $t_{h}^{3}\left(Y_{m}\right)$ that border one square face of $X_{m,h}^{1}$}
\[
E_{m,h}^{\square}:=\conv\left\{ \begin{pmatrix}2m-h\\
-h\\
\hphantom{+}h\\
\hphantom{+}h
\end{pmatrix},\begin{pmatrix}2m-h\\
-h\\
\hphantom{+}h\\
-h
\end{pmatrix}\right\} .
\]
 Each of these edges borders a square face of $X_{m,h}^{1}$. 
\item $12$ edges $E_{m,h}^{\hexagon,1},\ldots,E_{m,h}^{\hexagon,12}$ which
are appropriate rigid motions of the representative edge\nomenclature[D25 E_{m,h}^{\hexagon}]{$E_{m,h}^{\hexagon}$}{The representative edge for the edges $E_{m,h}^{1},\ldots,E_{m,h}^{24}$ of $t_{h}^{3}\left(Y_{m}\right)$ that border two adjacent hexagonal faces of $X_{m,h}^{1}$}
\[
E_{m,h}^{\hexagon}:=\conv\left\{ \begin{pmatrix}2m-h\\
-h\\
\hphantom{+}h\\
\hphantom{+}h
\end{pmatrix},\begin{pmatrix}m\\
m-2h\\
m\\
m
\end{pmatrix}\right\} .
\]
 Each of these edges borders two adjacent hexagonal faces of $X_{m,h}^{1}$
but not a square face. 
\item $24$ edges $t_{h}^{0}\left(E_{m}^{13}\right),\ldots,t_{h}^{0}\left(E_{m}^{36}\right)$
which are appropriate rigid motions of\nomenclature[D26 E_{m}^{\triangle}]{$E_{m}^{\triangle}$}{The edge of $Y_{m}$ which is truncated to form the representative edge $t_{h}^{0}\left(E_{m}^{\triangle}\right)$}
\[
t_{h}^{0}\left(E_{m}^{\triangle}\right)=\conv\left\{ \begin{pmatrix}2m-h\\
-h\\
\hphantom{+}h\\
\hphantom{+}h
\end{pmatrix},\begin{pmatrix}\hphantom{+}m\\
-m\\
\hphantom{+}m\\
\hphantom{+}m
\end{pmatrix}\right\} ,\qquad\text{where}\qquad E_{m}^{\triangle}:=\conv\left\{ \begin{pmatrix}2m\\
0\\
0\\
0
\end{pmatrix},\begin{pmatrix}\hphantom{+}m\\
-m\\
\hphantom{+}m\\
\hphantom{+}m
\end{pmatrix}\right\} \text{.}
\]
\item $60$ edges $E_{m}^{37},\ldots,E_{m}^{96}$ which are unchanged from
the corresponding edges in $Y_{m}$. 
\end{enumerate}
It follows directly from the definition that their lengths are 
\begin{equation}
\ell\left(E_{m}\right)=2m,\qquad\ell\left(E_{m,h}^{\square}\right)=2h,\qquad\ell\left(E_{m,h}^{\hexagon}\right)=\ell\left(t_{h}^{0}\left(E_{m}^{\triangle}\right)\right)=2\left(m-h\right)\text{.}\label{eq: Lengths of the edges of the truncated 24-cell}
\end{equation}
 Each edge $E$ is the intersection of three facets so the intersection
of its normal cone with $S^{2}$ is a spherical triangle, and to calculate
its area we use L'Huilier's Theorem (see \cite{Todhunter1871}, page
70): 
\begin{thm}[L'Huilier's Theorem]
Let $\pos\left\{ \mathbf{v}^{1},\mathbf{v}^{2},\mathbf{v}^{3}\right\} \cap S^{2}$
be a spherical triangle, then 
\[
\tan\left(\frac{a_{1,2,3}}{4}\right)=\sqrt{\begin{array}{l}
{\displaystyle \tan\left(\frac{a_{1,2}+a_{1,3}+a_{2,3}}{4}\right)\tan\left(\frac{-a_{1,2}+a_{1,3}+a_{2,3}}{4}\right)}\\
{\displaystyle \,\cdot\tan\left(\frac{a_{1,2}-a_{1,3}+a_{2,3}}{4}\right)\tan\left(\frac{a_{1,2}+a_{1,3}-a_{2,3}}{4}\right)}
\end{array}},
\]
 where $a_{i,j}$ is the angle between $\mathbf{v}^{i}$ and $\mathbf{v}^{j}$
and $a_{1,2,3}$ is the spherical area of $\pos\left\{ \mathbf{v}^{1},\mathbf{v}^{2},\mathbf{v}^{3}\right\} \cap S^{2}$.
\end{thm}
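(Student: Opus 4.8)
The plan is to recognize this as the classical L'Huilier formula and to prove it by reducing the spherical area to the spherical excess, then evaluating through the half-angle formulas of spherical trigonometry. Abbreviate the three sides by $a=a_{2,3}$, $b=a_{1,3}$, $c=a_{1,2}$ (the pairwise angles between the unit vectors $\mathbf{v}^{1},\mathbf{v}^{2},\mathbf{v}^{3}$ are exactly the arc lengths of the sides), write $A,B,C$ for the interior angles of the spherical triangle at the corresponding vertices, and set $s=\frac{1}{2}(a+b+c)$. Then the four quarter-angle arguments in the statement are precisely $\frac{s}{2}$, $\frac{s-a}{2}$, $\frac{s-b}{2}$, $\frac{s-c}{2}$, so the claim reads
\[
\tan\frac{a_{1,2,3}}{4}=\sqrt{\tan\frac{s}{2}\,\tan\frac{s-a}{2}\,\tan\frac{s-b}{2}\,\tan\frac{s-c}{2}}.
\]

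First I would apply Girard's theorem: on the unit sphere the area of a triangle equals its spherical excess, so $a_{1,2,3}=A+B+C-\pi$. Setting $\sigma=\frac{1}{2}(A+B+C)$ gives $\frac{a_{1,2,3}}{2}=\sigma-\frac{\pi}{2}$ and hence $\cos\frac{a_{1,2,3}}{2}=\sin\sigma$. The elementary identity $\tan^{2}\frac{\theta}{2}=\frac{1-\cos\theta}{1+\cos\theta}$ with $\theta=\frac{a_{1,2,3}}{2}$ then reduces the entire theorem to the single trigonometric identity
\[
\frac{1-\sin\sigma}{1+\sin\sigma}=\tan\frac{s}{2}\,\tan\frac{s-a}{2}\,\tan\frac{s-b}{2}\,\tan\frac{s-c}{2},
\]
after which one takes the positive square root; the right-hand side is positive because $\sigma\in(\frac{\pi}{2},\frac{3\pi}{2})$ forces $\sin\sigma<1$.

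The inputs I would feed into this identity are the spherical half-angle formulas, derived from the spherical law of cosines $\cos a=\cos b\cos c+\sin b\sin c\cos A$: solving for $\cos A$ and applying sum-to-product to $1\mp\cos A$ yields $\sin^{2}\frac{A}{2}=\frac{\sin(s-b)\sin(s-c)}{\sin b\sin c}$ and $\cos^{2}\frac{A}{2}=\frac{\sin s\,\sin(s-a)}{\sin b\sin c}$, together with their cyclic versions. The cleanest way to assemble these into $\sin\sigma$ is through the Delambre--Gauss analogies, which express $\sin\frac{A+B}{2}$ and $\cos\frac{A+B}{2}$ in terms of half-side functions; writing $\sin\sigma=\sin\frac{A+B}{2}\cos\frac{C}{2}+\cos\frac{A+B}{2}\sin\frac{C}{2}$ then puts $\sin\sigma$ entirely in terms of the sides $a,b,c$.

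The main obstacle is exactly this final reduction: confirming that $\frac{1-\sin\sigma}{1+\sin\sigma}$ collapses to the four-fold product of tangents. It is a purely trigonometric computation, but a delicate one, demanding repeated sum-to-product factorizations and careful bookkeeping of the arguments $s,s-a,s-b,s-c$, with attention to signs since $\sigma>\frac{\pi}{2}$. To guard against algebraic slips I would check the trirectangular triangle $a=b=c=A=B=C=\frac{\pi}{2}$, for which both sides of the reduced identity equal $3-2\sqrt{2}$. Since L'Huilier's theorem is a standard classical result (as the citation to Todhunter indicates), an acceptable shortcut is simply to quote it; but the route above is self-contained given only Girard's theorem and the spherical law of cosines.
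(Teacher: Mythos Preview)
The paper does not prove L'Huilier's theorem at all: it is merely stated with a reference to Todhunter (1871), p.~70, and then applied as a black box in computing the edge external angles. So your outline already goes well beyond what the paper does.

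Your sketch is the standard classical argument and is correct in outline: Girard reduces the area to the excess, the half-angle identity converts $\tan^{2}(E/4)$ to $(1-\sin\sigma)/(1+\sin\sigma)$, and the spherical half-angle formulas (equivalently, the Delambre--Gauss analogies) express $\sin\sigma$ in terms of the sides to yield the four-tangent product. One small slip: the remark that $\sin\sigma<1$ shows positivity is backwards --- that inequality is what makes the \emph{left} side $(1-\sin\sigma)/(1+\sin\sigma)$ positive, whereas positivity of the tangent product on the right comes from $0<s-a,s-b,s-c$ together with $s<\pi$ (since $a+b+c<2\pi$ for a spherical triangle), so all four half-arguments lie in $(0,\pi/2)$. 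Apart from that the route is the textbook one, and your own closing observation that simply citing the result is an acceptable shortcut is exactly what the paper does.
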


\begin{lem}
\label{lem: External angles of the edges of the truncated 24-cell}The
representative edges $E_{m,h}^{\hexagon}$, $E_{m,h}^{\square}$,
$t_{h}^{0}\left(E_{m}^{\triangle}\right)$, and $E_{m}$ have the
following external angles: 
\begin{align}
\theta\left(t_{h}^{3}\left(Y_{m}\right),E_{m,h}^{\hexagon}\right) & =\frac{3\arccos\left(\frac{1}{3}\right)-\pi}{4\pi},\label{eq: External angle of hexagon edge}\\
\theta\left(t_{h}^{3}\left(Y_{m}\right),E_{m,h}^{\square}\right) & =\frac{4\arctan\left(3-2\sqrt{2}\right)}{4\pi},\label{eq: External angle of square edge}\\
\theta\left(t_{h}^{3}\left(Y_{m}\right),t_{h}^{0}\left(E_{m}^{\triangle}\right)\right) & =\frac{3\arccos\left(\frac{1}{3}\right)-\pi}{4\pi},\label{eq: External angle of triangle edge}\\
\theta\left(t_{h}^{3}\left(Y_{m}\right),E_{m}\right) & =\frac{3\arccos\left(\frac{1}{3}\right)-\pi}{4\pi}.\label{eq: External angle of unchanged edge}
\end{align}
\end{lem}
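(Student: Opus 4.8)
The plan is to reduce every external angle to the spherical area of a spherical triangle and then evaluate that area with L'Huilier's Theorem. First I would unwind the definition of $\theta$ for a $1$-face in $\mathbb{R}^4$. If $E$ is an edge with direction line $\lin(E)$ and $N = N(t_h^3(Y_m),E)$ is its (three-dimensional) normal cone, which lies in the hyperplane $\lin(E)^\perp$, then writing $(N + \lin(E)) \cap B^4$ in coordinates adapted to the splitting $\lin(E) \oplus \lin(E)^\perp$ and integrating in polar coordinates on the three-dimensional factor gives
\[
\vol\bigl((N + \lin(E)) \cap B^4\bigr) = \Omega(E) \int_{-1}^{1} \frac{(1-t^2)^{3/2}}{3}\,dt = \frac{\pi}{8}\,\Omega(E),
\]
where $\Omega(E)$ is the spherical area (solid angle) of $N \cap S^2$. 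Since $\kappa_4 = \tfrac12\pi^2$, this yields the uniform reduction $\theta(t_h^3(Y_m),E) = \Omega(E)/(4\pi)$, which already accounts for the common denominator $4\pi$ in every line of the lemma. Because each edge of $Y_m$ and of $t_h^3(Y_m)$ is the intersection of exactly three facets, $N \cap S^2$ is a spherical triangle whose vertices are the three relevant outward unit normals.

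Next I would pin down, for each representative edge, the three facets meeting along it and read off their normals, all of the form $\tfrac1{\sqrt2}(\pm1,\pm1,0,0)$ and coordinate permutations. For $E_m$ the three normals are, up to symmetry, $\tfrac1{\sqrt2}(1,-1,0,0)$, $\tfrac1{\sqrt2}(1,0,1,0)$, $\tfrac1{\sqrt2}(1,0,0,1)$; since the truncation preserves both the direction of $t_h^0(E_m^\triangle)$ and the three octahedral facets bordering it (each of those facets still passes through the new endpoint on $H_1(m-h)$, while $X_{m,h}^1$ fails to meet the old vertex of $Y_m$), the edge $t_h^0(E_m^\triangle)$ has exactly the same three normals as $E_m$. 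For $E_{m,h}^\hexagon$ the bordering facets are the new cross-section $X_{m,h}^1$ together with two octahedral facets adjacent to $X_m^1$, giving normals $\tfrac1{\sqrt2}(1,1,0,0)$, $\tfrac1{\sqrt2}(1,0,1,0)$, $\tfrac1{\sqrt2}(1,0,0,1)$. For $E_{m,h}^\square$ the bordering facets are $X_{m,h}^1$ and two octahedral facets sharing only a vertex with $X_m^1$, giving normals $\tfrac1{\sqrt2}(1,1,0,0)$, $\tfrac1{\sqrt2}(1,-1,0,0)$, $\tfrac1{\sqrt2}(1,0,1,0)$.

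Taking inner products then fixes the side lengths of each spherical triangle. The three edges $E_m$, $t_h^0(E_m^\triangle)$, and $E_{m,h}^\hexagon$ all give equilateral triangles with every side equal to $\arccos\tfrac12 = \tfrac\pi3$, whereas $E_{m,h}^\square$ gives a triangle with sides $\tfrac\pi2,\tfrac\pi3,\tfrac\pi3$ (the side of length $\tfrac\pi2$ coming from the orthogonal pair $\tfrac1{\sqrt2}(1,1,0,0)$, $\tfrac1{\sqrt2}(1,-1,0,0)$). Feeding these into L'Huilier's Theorem, the equilateral case reduces to $\tan(\Omega/4) = \tan^{3/2}(\pi/12) = (2-\sqrt3)^{3/2}$, while the mixed case reduces to $\tan(\Omega/4) = \tan(\pi/8)\sqrt{\tan(7\pi/24)\tan(\pi/24)} = 3 - 2\sqrt2$, so that $\Omega = 4\arctan(3-2\sqrt2)$ for $E_{m,h}^\square$. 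Dividing each $\Omega$ by $4\pi$ produces the four stated values.

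The main obstacle is twofold. The bookkeeping part—correctly identifying the three bounding facets of each new or truncated edge and verifying that the truncation leaves the normals along $E_m$ and $t_h^0(E_m^\triangle)$ untouched—must be done carefully, but it is routine given the explicit vertex lists of Definition~\ref{def: Facets of Y_m}. The genuinely delicate point is the final trigonometric simplification: L'Huilier delivers the square-edge angle in exactly the advertised form $4\arctan(3-2\sqrt2)$, but for the equilateral triangles it returns $(2-\sqrt3)^{3/2}$, which still has to be reconciled with $3\arccos\tfrac13 - \pi$. I would therefore treat the equilateral case more transparently through the spherical excess: the spherical law of cosines applied to sides $\tfrac\pi3$ gives each interior angle equal to $\arccos\tfrac13$, whence Girard's formula yields area $3\arccos\tfrac13 - \pi$ directly, matching the three equilateral lines of the lemma.
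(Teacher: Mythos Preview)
Your argument is correct and follows essentially the same route as the paper: identify the three facet normals meeting at each representative edge, compute the spherical triangle they span, and divide its area by $4\pi$. The paper is terser---it simply cites Gandini and Zucco for the equilateral case and applies L'Huilier only to $E_{m,h}^{\square}$---whereas you spell out the reduction $\theta=\Omega/(4\pi)$ and, more usefully, handle the equilateral triangles via the spherical law of cosines and Girard's excess formula rather than L'Huilier. That is a genuine expository improvement: it produces $3\arccos\tfrac13-\pi$ directly instead of leaving $(2-\sqrt3)^{3/2}$ to be reconciled afterwards.

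One small slip to fix: in your description of $E_{m,h}^{\square}$ you say the two accompanying octahedral facets both ``share only a vertex with $X_m^1$,'' but the normals you then list, $\tfrac1{\sqrt2}(1,-1,0,0)$ and $\tfrac1{\sqrt2}(1,0,1,0)$, correspond to $X_m^{10}$ (vertex-adjacent to $X_m^1$) and $X_m^3$ (face-adjacent to $X_m^1$) respectively---exactly the $\mathbf{u}^1,\mathbf{u}^2,\mathbf{u}^{10}$ configuration the paper uses. The computation is unaffected, but the verbal identification should read ``one face-adjacent and one vertex-adjacent facet.''
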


\begin{proof}
The spherical triangles formed by the normal cones of $E_{m,h}^{\hexagon}$,
$t_{h}^{0}\left(E_{m}^{\triangle}\right)$, and $E_{m}$ have the
same angles as the spherical triangle mentioned in \cite{GandiniZucco1992},
Part (A), from which (\ref{eq: External angle of hexagon edge}),
(\ref{eq: External angle of triangle edge}), and (\ref{eq: External angle of unchanged edge})
all follow. The spherical triangle formed by the normal cone of $E_{m,h}^{\square}$
has outward normals $\mathbf{u}^{1}$, $\mathbf{u}^{2}$, and $\mathbf{u}^{10}$,
so $\arccos\left(\mathbf{u}^{1}\cdot\mathbf{u}^{2}\right)=\arccos\left(\mathbf{u}^{2}\cdot\mathbf{u}^{10}\right)=\frac{\pi}{3}$
and $\arccos\left(\mathbf{u}^{1}\cdot\mathbf{u}^{10}\right)=\frac{\pi}{2}$,
and so L'Huilier's Theorem gives (\ref{eq: External angle of square edge}):
\begin{eqnarray*}
\area\left(N\left(t_{h}^{3}\left(Y_{m}\right),E_{m,h}^{\square}\right)\cap S^{2}\right) & = & 4\arctan\sqrt{\tan\left(\frac{7\pi}{24}\right)\tan\left(\frac{3\pi}{24}\right)\tan\left(\frac{3\pi}{24}\right)\tan\left(\frac{\pi}{24}\right)}\\
 & = & 4\arctan\left(3-2\sqrt{2}\right).
\end{eqnarray*}
\end{proof}
\begin{proof}[Proof of (\ref{eq: Steiner polynomial for (t_h)^3(Y_m) + B^4, edge part})
in Lemma \ref{lem: Components of the Steiner polynomial}]
 The discussion at the beginning of this subsubsection, along with
(\ref{eq: Lengths of the edges of the truncated 24-cell}) and Lemma
\ref{lem: External angles of the edges of the truncated 24-cell},
imply that 
\begin{eqnarray*}
e\left(t_{h}^{3}\left(Y_{m}\right)\right) & = & \sum_{E\in\mathscr{F}_{1}\left(t_{h}^{3}\left(Y_{m}\right)\right)}\ell\left(E\right)\theta\left(t_{h}^{3}\left(Y_{m}\right),E\right)\kappa_{3}\\
 & = & 12\ell\left(E_{m,h}^{\square}\right)\theta\left(t_{h}^{3}\left(Y_{m}\right),E_{m,h}^{\square}\right)\kappa_{3}+24\ell\left(E_{m,h}^{\hexagon}\right)\theta\left(t_{h}^{3}\left(Y_{m}\right),E_{m,h}^{\square}\right)\kappa_{3}\\
 &  & \,+\,24\ell\left(t_{h}^{0}\left(E_{m}^{\triangle}\right)\right)\theta\left(t_{h}^{3}\left(Y_{m}\right),t_{h}^{0}\left(E_{m}^{\triangle}\right)\right)\kappa_{3}+60\ell\left(E_{m}\right)\theta\left(t_{h}^{3}\left(Y_{m}\right),E_{m}\right)\kappa_{3}\\
 & = & \left(64m-24h\right)\left(3\arccos\left(\frac{1}{3}\right)-\pi\right)+16h\arctan\left(3-2\sqrt{2}\right).
\end{eqnarray*}
\end{proof}

\section{\protect\label{sec: Truncation of three facets}The truncation of
three facets and the upper bound $n_{4}^{*}\protect\leq338,\!196$ }

In this section we truncate three facets of $Y_{m}$. Since the $24$-cell
has three pairwise disjoint facets, in the next subsection we define
the polytope $t_{\mathbf{h}}^{3}\left(Y_{m}\right)$ obtained from
truncating three such facets $X_{m}^{1}$, $X_{m}^{16}$, and $X_{m}^{17}$
(see the start of Section \ref{sec: The truncation of a single facet})
of $Y_{m}$, and obtain the triple facet versions of Lemmas \ref{lem: G((t_h)^3(Y_m)), one facet}
and \ref{lem: Steiner's formula (polynomial) for (t_h)^3(Y_m) + B^4, one facet}. 
\begin{defn}
\label{def: Truncation of three facets from Y_m}Let $m\in\mathbb{N}$
and $\mathbf{h}=\left(h_{1},h_{16},h_{17}\right)\in\left\{ 0,\ldots,m\right\} ^{3}$,
then define\nomenclature[E1 t_{h_vector}^{3}\left(Y_{m}\right)]{$t_{\mathbf{h}}^{3}\left(Y_{m}\right)$}{The truncation of $h_{1}$, $h_{16}$, and $h_{17}$ layers of $D_{4}$ from three facets of $Y_{m}$, where $\mathbf{h}=\left(h_{1},h_{16},h_{17}\right)$}
$t_{\mathbf{h}}^{3}\left(Y_{m}\right):=Y_{m}\cap\left[H_{1}\left(m-h_{1}\right)\cap H_{16}\left(m-h_{16}\right)\cap H_{17}\left(m-h_{17}\right)\right]$. 
\end{defn}

\noindent \begin{center}
\begin{figure}[H]
\noindent \begin{centering}
\includegraphics[width=6.24in]{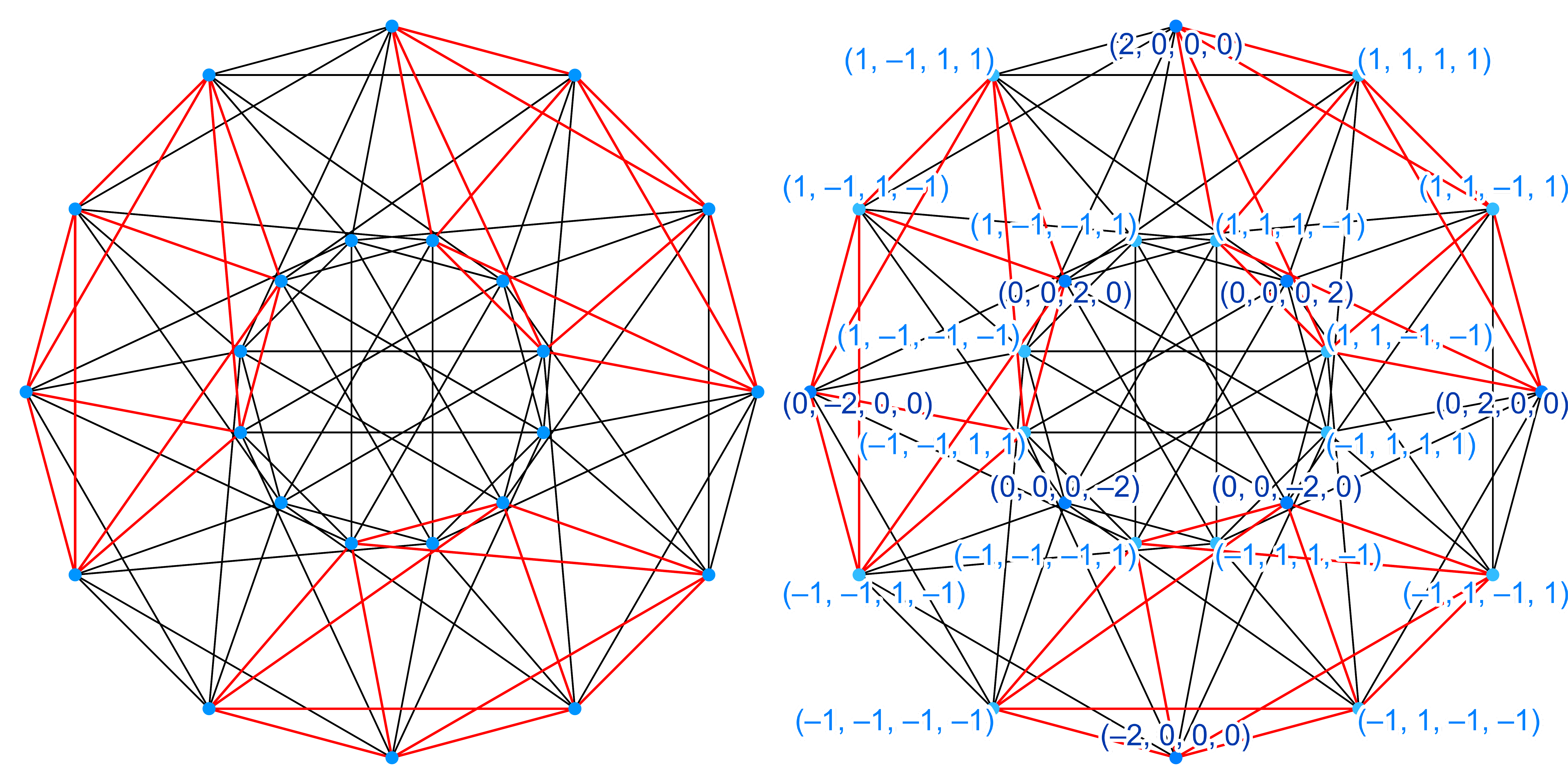}
\par\end{centering}
\caption{\protect\label{fig: Three disjoint facets of the 24-cell}Three disjoint
facets of $Y_{1}$, whose edges are marked in \emph{red}, with and
without coordinates. The $24$-cell layout of this diagram and others
in the Appendix are based on \cite{24CellMathWorld,24CellWikipedia}
and the $24$-cell visualization in the application \emph{HyperSolids}
\cite{24CellHyperSolids}. }
\end{figure}
\par\end{center}

In the same way that $h$ was used to denote the amount of truncation
of $X_{m}^{1}$, we use $\mathbf{h}=\left(h_{1},h_{16},h_{17}\right)$\nomenclature[E2 h_vector]{$\mathbf{h}$}{An ordered triple $\mathbf{h}=\left(h_{1},h_{16},h_{17}\right)$ which denotes the number of layers of $D_{4}$ which are removed by truncating the faces $X_{m}^{1}$, $X_{m}^{16}$, and $X_{m}^{17}$ of $Y_{m}$}
where $h_{i}$ denotes the amount of truncation of $X_{m}^{i}$ for
$i\in\left\{ 1,16,17\right\} $. While Wills \cite{Wills1985} was
able to truncate all four vertices of the tetrahedron to varying degrees,
we limit ourselves to three facets of $Y_{m}$ which were chosen to
be pairwise disjoint. However, when they are truncated they will move
closer to each other and possibly intersect in ways that will change
the shape of the facets. We want to avoid any intersections, otherwise
we may accidentally double count or miss some points or volumes during
our calculations. The following proposition shows that not only are
$X_{m}^{1}$, $X_{m}^{16}$, and $X_{m}^{17}$ disjoint, but their
truncated counterparts $X_{m,h_{1}}^{1}$, $X_{m,h_{16}}^{16}$, and
$X_{m,h_{17}}^{17}$ stay disjoint for all $h_{1},h_{16},h_{17}\in\left\{ 0,\ldots,\left\lfloor \frac{m-1}{2}\right\rfloor \right\} $.
\begin{prop}
\label{prop: The three truncated facets of the 24-cell are still disjoint}Let
$m\in\mathbb{N}$ and $\mathbf{h}\in\left\{ 0,\ldots,\left\lfloor \frac{m-1}{2}\right\rfloor \right\} ^{3}$.
Then the three facets $X_{m,h_{i}}^{i}$, $i\in\left\{ 1,16,17\right\} $,
are pairwise disjoint.
\end{prop}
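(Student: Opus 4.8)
The plan is to reduce the pairwise-disjointness statement to a single distance computation, exploiting the high symmetry of the $24$-cell. The three chosen facets $X_m^1$, $X_m^{16}$, $X_m^{17}$ are pairwise disjoint regular octahedra; recall from Definition \ref{def: Facets of Y_m} that $\mathbf{u}^i$ is $\tfrac{1}{m\sqrt2}$ times the centroid of $X_m^i$ and that $X_m^i \subset H_i(m)$, i.e.\ the supporting hyperplane of the untruncated facet sits at signed distance $\sqrt2\,m$ from the origin. The truncation $X_{m,h_i}^i = Y_m \cap H_i(m-h_i)$ lives in the hyperplane $H_i(m-h_i)$, at distance $\sqrt2\,(m-h_i)$ from $\mathbf{0}$. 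First I would compute, for each unordered pair $\{i,j\}\subset\{1,16,17\}$, the angle $\arccos(\mathbf{u}^i\cdot\mathbf{u}^j)$ between the two outward normals; by the symmetry argument already used in Lemma \ref{lem: Areas and external angles of the faces of the truncated 24-cell} (where one finds $\mathbf{u}^1\cdot\mathbf{u}^2=\tfrac12$ and $\mathbf{u}^1\cdot\mathbf{u}^{10}=0$), I expect the three normals of pairwise-disjoint facets to be mutually \emph{orthogonal} (this is exactly the statement that a $24$-cell has triples of pairwise-disjoint octahedral facets whose centroids are orthogonal, e.g.\ $\pm\mathbf e_1$, $\pm\mathbf e_2$, $\pm\mathbf e_3$ directions after normalization). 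I would verify $\mathbf{u}^1\cdot\mathbf{u}^{16}=\mathbf{u}^1\cdot\mathbf{u}^{17}=\mathbf{u}^{16}\cdot\mathbf{u}^{17}=0$ directly from the vertex coordinates in \eqref{eq: X_m^1}, \eqref{eq: X_m^16}, \eqref{eq: X_m^17}.

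Granting orthogonality of the normals, the obstruction to disjointness is purely local: two truncated octahedra in orthogonal slabs can only collide near the shared region where both slicing hyperplanes cut into $Y_m$. The cleanest route is to show that the truncated slabs removed by the two truncations do not overlap. The slab removed by the $i$-th truncation is $Y_m \cap \{\,\mathbf{x} : \mathbf{u}^i\cdot\mathbf{x} > \sqrt2\,(m-h_i)\,\}$, i.e.\ the cap $\widehat Y_{m,h_i}$ of \emph{thickness} $\sqrt2\,h_i$ measured along $\mathbf{u}^i$. Because the normals are orthogonal, I would argue that any point $\mathbf{x}$ lying in both caps would have to satisfy $\mathbf{u}^i\cdot\mathbf{x} > \sqrt2\,(m-h_i)$ for two orthogonal unit vectors simultaneously, while still belonging to $Y_m = \bigcap_{k=1}^{24} H_k^-(m)$; I would then bound $|\mathbf{x}|$ from below and show this forces $\mathbf{x}\notin Y_m$ once $h_i,h_j \le \lfloor\tfrac{m-1}{2}\rfloor$. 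Concretely, using orthonormality one gets $|\mathbf x|^2 \ge (\mathbf u^i\cdot\mathbf x)^2 + (\mathbf u^j\cdot\mathbf x)^2 > 2\bigl((m-h_i)^2 + (m-h_j)^2\bigr)$, and this lower bound on the norm must be reconciled with membership in the other supporting half-spaces of $Y_m$.

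The sharpest and most natural reformulation is therefore a direct distance estimate: I would compute $\dist\bigl(X_{m,h_i}^i, X_{m,h_j}^j\bigr)$ and show it is strictly positive. A robust way to do this is to observe that $X_{m,h_i}^i$ is contained in the affine hyperplane $H_i(m-h_i)$ while $X_{m,h_j}^j \subset H_j(m-h_j)$, and to bound the separation of these two octahedral faces inside $Y_m$. Since the facets are octahedra of edge length $2m$ lying at mutual ``distance $2\sqrt2\,m$'' between the parallel supporting hyperplanes of opposite facets, the worst case is the pair of vertices of $X_m^i$ and $X_m^j$ that are closest to each other before truncation; I would identify those nearest vertices from the coordinate lists and compute how truncation by $h_i$ and $h_j$ layers moves the truncated facets' boundaries. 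The condition $h_i \le \lfloor\tfrac{m-1}{2}\rfloor$ is exactly what guarantees the two cut hyperplanes stay on opposite sides of the relevant midpoint, i.e.\ $(m-h_i)+(m-h_j) \ge m+1 > m$, keeping the truncated slabs from meeting.

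The main obstacle I anticipate is not the orthogonality computation (which is immediate from the coordinates) but pinning down \emph{exactly which} vertices or boundary features of the two truncated octahedra realize the minimum distance, and verifying that the crude norm bound above actually certifies disjointness rather than merely ``almost'' doing so. In particular, I would need to rule out a near-miss where a truncated edge of $X_{m,h_i}^i$ grazes $X_{m,h_j}^j$; the orthogonality of $\mathbf u^i,\mathbf u^j$ should make this tractable, because it lets me split any candidate common point into orthogonal coordinate contributions and treat each truncation independently. I expect the inequality to reduce, after elementary manipulation, to something equivalent to $2h_i + 1 \le m$ and $2h_j + 1 \le m$, i.e.\ precisely $h_i, h_j \le \lfloor\tfrac{m-1}{2}\rfloor$, confirming that the hypothesis on $\mathbf h$ is sharp and that strict disjointness holds throughout the allowed range.
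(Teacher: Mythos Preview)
Your central geometric assumption is wrong, and the error propagates through the whole plan. You conjecture that the three outward normals $\mathbf{u}^{1},\mathbf{u}^{16},\mathbf{u}^{17}$ are mutually orthogonal, extrapolating from $\mathbf{u}^{1}\cdot\mathbf{u}^{10}=0$. But computing the centroids from \eqref{eq: X_m^1}, \eqref{eq: X_m^16}, \eqref{eq: X_m^17} gives
\[
\mathbf{u}^{1}=\tfrac{1}{\sqrt2}(1,1,0,0),\qquad
\mathbf{u}^{16}=\tfrac{1}{\sqrt2}(-1,0,-1,0),\qquad
\mathbf{u}^{17}=\tfrac{1}{\sqrt2}(0,-1,1,0),
\]
so each pairwise dot product is $-\tfrac12$, not $0$: the normals make an angle of $120^{\circ}$. (In the $24$-cell, dot product $0$ corresponds to facets sharing exactly a vertex; disjoint non-antipodal facets give $-\tfrac12$.) With this, your Pythagorean bound $|\mathbf{x}|^{2}\ge(\mathbf{u}^{i}\cdot\mathbf{x})^{2}+(\mathbf{u}^{j}\cdot\mathbf{x})^{2}$ fails, and the ``independent coordinate contributions'' heuristic you rely on to localize the nearest features also breaks down. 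The reduction to $(m-h_i)+(m-h_j)>m$ that you anticipate is therefore not justified by your argument.

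The paper's proof exploits a different structural fact that survives (indeed, is created by) the $120^{\circ}$ angle: for each pair $\{i,j\}\subset\{1,16,17\}$ the sum $\mathbf{u}^{i}+\mathbf{u}^{j}$ is itself a facet normal of $Y_m$ (e.g.\ $\mathbf{u}^{1}+\mathbf{u}^{17}=\tfrac{1}{\sqrt2}(1,0,1,0)=\mathbf{u}^{3}$). Consequently the affine $2$-plane $A_{m,h}=H_i(m-h)\cap H_j(m-h)$ is \emph{parallel} to a supporting hyperplane of $Y_m$, and one only has to compare two distances from the origin: $\dist(\mathbf{0},A_{m,h})=2\sqrt2\,(m-h)$ versus $\dist(\mathbf{0},H_3(m))=\sqrt2\,m$. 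The inequality $2(m-h)>m$, i.e.\ $h<\tfrac{m}{2}$, then places $A_{m,h}$ entirely outside $Y_m$, so the two truncated facets (which lie in $H_i(m-h)\cap Y_m$ and $H_j(m-h)\cap Y_m$) cannot meet. No vertex-by-vertex nearest-feature analysis is needed.
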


\begin{proof}
Without loss of generality, we can assume that $i=1$, $j=17$, and
$h=h_{1}=h_{17}$ (otherwise we can set $h=\max\left\{ h_{1},h_{17}\right\} $).
We will show that if $h<\frac{m}{2}$ then the intersection $A_{m,h}:=\aff\left(X_{m,h}^{1}\right)\cap\aff\left(X_{m,h}^{17}\right)$
does not intersect $t_{\mathbf{h}}^{3}\left(Y_{m}\right)$, from which
it follows that $X_{m,h}^{1}$ and $X_{m,h}^{17}$ are disjoint. 

There exists some $\mathbf{x}^{m,h}\in A_{m,h}$ such that the quadrilateral
$\conv\left\{ \mathbf{0},\sqrt{2}\left(m-h\right)\mathbf{u}^{1},\sqrt{2}\left(m-h\right)\mathbf{u}^{17},\mathbf{x}^{m,h}\right\} \subset\lin\left\{ \mathbf{u}^{1},\mathbf{u}^{17}\right\} $
forms a kite whose internal angles are $120{^\circ}$, $90{^\circ}$,
$60{^\circ}$, and $90{^\circ}$. Consider the support hyperplane
$H_{17}\left(m-h\right)$. Since $\mathbf{x}^{m,h}=\sqrt{2}\left(m-h\right)\left(1,0,1,0\right)^{\mathsf{T}}$
is a scalar multiple of $\mathbf{u}^{17}$, the intersection $A_{m,h}$
is parallel to $H_{17}\left(m-h\right)$, hence it suffices to check
only the single point $\mathbf{x}^{m,h}\in A_{m,h}$. The distances
are $\dist\left(\mathbf{0},\mathbf{x}^{m,-k}\right)=2\sqrt{2}\left(m-h\right)$
and $\dist\left(\mathbf{0},H_{17}\left(m-h\right)\right)=\sqrt{2}m$,
so $X_{m,h}^{1}$ and $X_{m,h}^{17}$ are disjoint if $2\sqrt{2}\left(m-h'\right)>\sqrt{2}m$,
which simplifies to $h'<\frac{m}{2}$. This final inequality can be
written as $h'\leq\left\lfloor \frac{m-1}{2}\right\rfloor $ since
$m$ is an integer. 
\end{proof}
With the following lemma, we can finally write the desired quantities
$G\left(t_{\mathbf{h}}^{3}\left(Y_{m}\right)\right)$ and $\vol\left(t_{\mathbf{h}}^{3}\left(Y_{m}\right)+B^{4}\right)$
in terms of $m$ and $\mathbf{h}=\left(h_{1},h_{16},h_{17}\right)$.
For conciseness, we write $\left|\mathbf{h}\right|^{k}:=h_{1}^{k}+h_{16}^{k}+h_{17}^{k}$\nomenclature[E3 h_vector^k]{$\left|\mathbf{h}\right|^{k}$}{The sum $h_{1}^{k}+h_{16}^{k}+h_{17}^{k}$}
for $\mathbf{h}$ in Definition \ref{def: Truncation of three facets from Y_m}
and $k\in\mathbb{N}$. 
\begin{lem}
\label{lem: G((t_h)^3(Y_m)) and Steiner's formula, three facets}
Let $m\in\mathbb{N}$ and $\mathbf{h}\in\left\{ 0,\ldots,\left\lfloor \frac{m-1}{2}\right\rfloor \right\} ^{3}$.
Then 
\begin{multline}
G\left(t_{\mathbf{h}}^{3}\left(Y_{m}\right)\right)=4m^{4}+\left(8-\frac{2\left|\mathbf{h}\right|}{3}\right)m^{3}+\left(8-\left(\left|\mathbf{h}\right|^{2}+\left|\mathbf{h}\right|\right)\right)m^{2}\\
\,+\left(4-\frac{2\left|\mathbf{h}\right|^{3}+3\left|\mathbf{h}\right|^{2}+2\left|\mathbf{h}\right|}{3}\right)m+\left(1-\frac{-2\left|\mathbf{h}\right|^{4}+2\left|\mathbf{h}\right|^{3}+5\left|\mathbf{h}\right|^{2}+\left|\mathbf{h}\right|}{6}\right)\label{eq: G((t_h)^3(X_m)), the number of points in the truncated 24-cell, three facets}
\end{multline}
 and 
\begin{eqnarray}
\vol\left(t_{\mathbf{h}}^{3}\left(Y_{m}\right)+B^{4}\right) & = & 32m^{4}+\left[64\sqrt{2}-\frac{16}{3}\left|\mathbf{h}\right|\right]m^{3}+\left[16\sqrt{3}\pi-8\sqrt{2}\left|\mathbf{h}\right|-8\left|\mathbf{h}\right|^{2}\right]m^{2}\nonumber \\
 &  & \,+\left[64\left(3\arccos\left(\frac{1}{3}\right)-\pi\right)-\frac{4\sqrt{3}\pi}{3}\left|\mathbf{h}\right|-8\sqrt{2}\left|\mathbf{h}\right|^{2}-\frac{16}{3}\left|\mathbf{h}\right|^{3}\right]m\nonumber \\
 &  & \,+\,\frac{1}{2}\pi^{2}+\left[\left(64\arctan\left(3-2\sqrt{2}\right)-24\left(3\arccos\left(\frac{1}{3}\right)-\pi\right)\right)\left|\mathbf{h}\right|\vphantom{\frac{\left(18-14\sqrt{3}\right)\pi}{3}}\right.\nonumber \\
 &  & \left.\hphantom{\,+\,\frac{1}{2}\pi^{2}}\qquad+\frac{\left(18-14\sqrt{3}\right)\pi}{3}\left|\mathbf{h}\right|^{2}-\frac{8\sqrt{2}}{3}\left|\mathbf{h}\right|^{3}+\frac{8}{3}\left|\mathbf{h}\right|^{4}\right].\label{eq: Steiner polynomial for (t_-h)^3(X_m) + B^4, three facets, in terms of m and h}
\end{eqnarray}
\end{lem}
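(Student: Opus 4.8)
The plan is to leverage the single-facet results of Lemmas~\ref{lem: G((t_h)^3(Y_m)), one facet} and~\ref{lem: Steiner's formula (polynomial) for (t_h)^3(Y_m) + B^4, one facet}, exploiting the fact that the three chosen facets $X_m^1$, $X_m^{16}$, $X_m^{17}$ are pairwise disjoint and, by Proposition~\ref{prop: The three truncated facets of the 24-cell are still disjoint}, remain disjoint after truncation for every $\mathbf{h}\in\{0,\ldots,\lfloor(m-1)/2\rfloor\}^3$. The crucial point is that disjointness makes the three truncations \emph{non-interacting}: the slice removed at each facet, together with the lower-dimensional faces it modifies, lies in a neighborhood disjoint from those of the other two facets. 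Consequently every quantity of interest decomposes as its value on $Y_m$ plus three independent single-facet corrections, one per facet.

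First I would treat the point count. By Proposition~\ref{prop: Formula for the number of points in the truncated 24-cell}, applied at each of the three facets, together with the disjointness of the removed slices (so that no lattice point of $D_4$ is omitted or double-counted),
\[
G\bigl(t_{\mathbf{h}}^3(Y_m)\bigr)=G(Y_m)-\sum_{i\in\{1,16,17\}}\bigl(G(Y_m)-G(t_{h_i}^3(Y_m))\bigr).
\]
The computation establishing (\ref{eq: G((t_h)^3(Y_m)), one facet}) expresses the single-facet deficit $G(Y_m)-G(t_h^3(Y_m))$ as a polynomial in $m$ and $h$ every term of which is a monomial $c_{j,k}\,m^j h^k$ with $k\geq 1$. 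Summing such monomials over $h\in\{h_1,h_{16},h_{17}\}$ simply replaces each $h^k$ by $h_1^k+h_{16}^k+h_{17}^k=|\mathbf{h}|^k$, which yields (\ref{eq: G((t_h)^3(X_m)), the number of points in the truncated 24-cell, three facets}).

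Next I would handle the volume via Steiner's formula, treating the four components $\vol$, $c$, $f$, $e$ of Lemma~\ref{lem: Components of the Steiner polynomial} separately. Because the affected regions at the three facets are disjoint, each component for $t_{\mathbf{h}}^3(Y_m)$ equals its $Y_m$ base value plus the sum of the three single-facet corrections recorded in (\ref{eq: Steiner polynomial for (t_h)^3(Y_m) + B^4, 4-dim volume part})--(\ref{eq: Steiner polynomial for (t_h)^3(Y_m) + B^4, edge part}); here the symmetry of the $24$-cell, which acts transitively on its facets, guarantees that truncating $X_m^{16}$ or $X_m^{17}$ by an amount $h_i$ produces the same local combinatorics and volumes as truncating $X_m^1$ by $h_i$, so each facet contributes the identical single-facet correction with its own parameter. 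As in the point-count case, every correction is a sum of monomials in the single parameter $h$, so summation over the three facets again sends $h^k\mapsto|\mathbf{h}|^k$. Adding the four components and the constant $\tfrac12\pi^2$ reproduces (\ref{eq: Steiner polynomial for (t_-h)^3(X_m) + B^4, three facets, in terms of m and h}).

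The main obstacle is justifying the non-interaction rigorously---that is, confirming that the three removed slices, and the lower-dimensional faces they alter, are genuinely pairwise disjoint, not merely that the untruncated facets are. This is precisely the content of Proposition~\ref{prop: The three truncated facets of the 24-cell are still disjoint}: since each truncating hyperplane fails to meet the affine hulls of the other truncated facets inside $t_{\mathbf{h}}^3(Y_m)$, the slices cannot overlap, and the additivity invoked above is valid exactly on the stated range $\mathbf{h}\in\{0,\ldots,\lfloor(m-1)/2\rfloor\}^3$. It is worth noting that the appearance of $|\mathbf{h}|^k=h_1^k+h_{16}^k+h_{17}^k$ rather than a power of $h_1+h_{16}+h_{17}$ is itself the combinatorial fingerprint of this independence: were the truncations to interact, cross-terms such as $h_1h_{16}$ would appear, and their absence confirms that the three corrections add without interference.
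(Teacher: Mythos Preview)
Your proposal is correct and follows essentially the same approach as the paper: the paper's own proof simply states that the two formulas follow directly from Lemmas~\ref{lem: G((t_h)^3(Y_m)), one facet} and~\ref{lem: Steiner's formula (polynomial) for (t_h)^3(Y_m) + B^4, one facet}, Proposition~\ref{prop: The three truncated facets of the 24-cell are still disjoint}, and (\ref{eq: Steiner polynomial for (t_h)^3(Y_m) + B^4 in terms of m and h}). Your write-up merely unpacks this in more detail, making explicit the additivity of the three single-facet corrections and the substitution $h^k\mapsto|\mathbf{h}|^k$ that the paper also remarks on immediately after the proof.
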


Hence the values of $G\left(t_{\mathbf{h}}^{3}\left(Y_{m}\right)\right)$
and $\vol\left(t_{\mathbf{h}}^{3}\left(Y_{m}\right)+B^{4}\right)$
are straightforward calculations using $m$ and the sums of powers
$\left|\mathbf{h}\right|^{k}$, $k\in\left\{ 1,2,3,4\right\} $. 
\begin{proof}
(\ref{eq: G((t_h)^3(X_m)), the number of points in the truncated 24-cell, three facets})
and (\ref{eq: Steiner polynomial for (t_-h)^3(X_m) + B^4, three facets, in terms of m and h})
follow directly from Lemmas \ref{lem: G((t_h)^3(Y_m)), one facet}
and \ref{lem: Steiner's formula (polynomial) for (t_h)^3(Y_m) + B^4, one facet},
Proposition \ref{prop: The three truncated facets of the 24-cell are still disjoint},
and (\ref{eq: Steiner polynomial for (t_h)^3(Y_m) + B^4 in terms of m and h}). 
\end{proof}
Note that because $\left|\mathbf{h}\right|^{k}$ is linear in each
term $h_{i}^{k}$, (\ref{eq: G((t_h)^3(X_m)), the number of points in the truncated 24-cell, three facets})
and (\ref{eq: Steiner polynomial for (t_-h)^3(X_m) + B^4, three facets, in terms of m and h})
are identical to (\ref{eq: G((t_h)^3(Y_m)), one facet}) and (\ref{eq: Steiner polynomial for (t_h)^3(Y_m) + B^4 in terms of m and h})
except for the use of $\left|\mathbf{h}\right|^{k}$ instead of $h^{k}$. 

Finally, we prove the main theorem about $n_{4}^{*}$ after a couple
of additional definitions. 
\begin{defn}
Let $\mathscr{C}$ be a set of packings in $\mathbb{R}^{d}$. Let
$C\in\mathscr{C}$ and $k\in\left\{ 0,\ldots,\left|C\right|\right\} $,
then define $p_{k}\left(C\right):=C\left\backslash \left\{ k\text{ distinct points of }C\right\} \right.$\nomenclature[E4 p_k(C)]{$p_{k}\left(C\right)$}{A packing $C$ with $k$ of its points removed. The specific points do not matter (for our purposes), only the number of points}. 
\end{defn}

This definition is strictly speaking not well-defined, however, \emph{we
will only use inequalities involving $p_{k}\left(C\right)$ that do
not depend on the specific points that are removed}. 
\begin{defn}
\label{def: Approximate density}Let $C\in\mathscr{C}$ and $n\in\mathbb{N}_{0}$,
then define\nomenclature[E5 delta tilde(C, n)]{$\widetilde{\delta}\left(C,n\right)$}{A lower bound for $\delta\left(C,n\right)$ obtained by the convex hull of the spheres of $C$}
\begin{equation}
\widetilde{\delta}\left(C,n\right):=\frac{n\kappa_{d}}{\vol\left(\conv C+B^{d}\right)}\qquad\text{for }n\in\left\{ 0,\ldots,\left|C\right|\right\} \text{.}\label{eq: Approximate (crude) density}
\end{equation}
\end{defn}

We refer to $\widetilde{\delta}$ as the \textbf{approximate (packing)
density} and write $\widetilde{\delta}\left(p_{k}\left(C\right)\right)$\nomenclature[E6 delta tilde(p_{k}(C))]{$\widetilde{\delta}\left(p_{k}\left(C\right)\right)$}{Another term for $\widetilde{\delta}\left(C,\left|C\right|-k\right)$}
to mean $\widetilde{\delta}\left(C,\left|C\right|-k\right)$. The
definition of $\widetilde{\delta}\left(p_{k}\left(C\right)\right)$
uses $\vol\left(\conv C+B^{d}\right)$ in the denominator instead
of $\vol\left(\conv\left(p_{k}\left(C\right)+B^{d}\right)\right)$,
thereby sidestepping the ambiguity in the definition of $p_{k}\left(C\right)$
at the cost of a slightly larger denominator. In particular, $\delta\left(p_{k}\left(C\right)\right)\geq\widetilde{\delta}\left(p_{k}\left(C\right)\right)$
for any packing set $C$ and $k\in\left\{ 0,\ldots,\left|C\right|\right\} $. 
\begin{proof}[Proof of Theorem \ref{thm: MAIN THEOREM: n*_4 upper bound}]
We calculate $G\left(t_{\mathbf{h}}^{3}\left(Y_{17}\right)\right)$
and $\vol\left(t_{\mathbf{h}}^{3}\left(Y_{17}\right)+B^{4}\right)$
for all triples $\mathbf{h}\in\left\{ 0,\ldots,8\right\} ^{3}$ using
(\ref{eq: G((t_h)^3(X_m)), the number of points in the truncated 24-cell, three facets})
and (\ref{eq: Steiner polynomial for (t_-h)^3(X_m) + B^4, three facets, in terms of m and h}).
The particular $\mathbf{h}$ that yields the packing with fewest spheres
that is still denser than the sausage is $\mathbf{h}=\left(1,3,4\right)$.
The corresponding packing $t_{\left(1,3,4\right)}^{3}\left(Y_{17}\right)\cap D_{4}$
has $G\left(t_{-\left(1,3,4\right)}^{3}\left(Y_{17}\right)\right)=338,\!224$
points and $\delta\left(t_{\left(1,3,4\right)}^{3}\left(Y_{m}\right)\cap D_{4}\right)\approx0.589\,106$
exceeds $\delta\left(S_{338,224}^{4}\right)\approx0.589\,049$, so
$n_{4}^{*}\leq338,\!224$. Furthermore, $n_{4}^{*}\leq338,\!224-32=338,\!192$
is obtained from 
\begin{alignat*}{3}
\widetilde{\delta}\left(p_{33}\left(t_{\left(1,3,4\right)}^{3}\left(Y_{m}\right)\cap D_{4}\right)\right) & \approx0.589\,048\,6 & \qquad & <\qquad & \delta\left(S_{338,191}^{4}\right) & \approx0.589\,049\,3,\\
\widetilde{\delta}\left(p_{32}\left(t_{\left(1,3,4\right)}^{3}\left(Y_{m}\right)\cap D_{4}\right)\right) & \approx0.589\,050\,3 & \qquad & >\qquad & \delta\left(S_{338,192}^{4}\right) & \approx0.589\,049\,3.
\end{alignat*}
\end{proof}
Figures \ref{fig: Densities of the truncated 24-cells} and \ref{fig: Densities of the truncated 24-cells, zoom in}
in the Appendix graph the densities of various packings $t_{\mathbf{h}}^{3}\left(Y_{m}\right)\cap D_{4}$
near $n=338,\!224$. 

\section{\protect\label{sec: N*_4 upper bound}The upper bound $N_{4}^{*}\protect\leq516,\!946$ }

In this section we prove Theorem \ref{thm: MAIN THEOREM: N*_4 upper bound}.
While we use the existing collection of packings $t_{\mathbf{h}}^{3}\left(Y_{m}\right)$,
an important contrast is present for $N_{4}^{*}$ compared to $n_{4}^{*}$.
To establish Theorem \ref{thm: MAIN THEOREM: n*_4 upper bound} it
sufficed to exhibit a single packing of $338,\!192$ points with density
exceeding that of the sausage, however, to establish Theorem \ref{thm: MAIN THEOREM: N*_4 upper bound}
it is required to show that \emph{for every} natural number $N\geq516,\!946$
there exists a finite packing in $\mathbb{R}^{4}$ with $N$ points
that is denser than the corresponding sausage. Gandini and Wills (1992)
\cite{GandiniWills1992} used three different sequences of three different
kinds of polyhedra and performed systems of vertex truncations on
them. With this approach they were able to show that for every $N\geq65$
and $N\in\left\{ 56,59,61,62\right\} $, an appropriate three-dimensional
packing of $N$ spheres is denser than the sausage. Similarly, we
generate a large number of packings from the truncations of $Y_{m}$
using the observation at the end of Section \ref{sec: Truncation of three facets}. 
\begin{defn}
\label{def: Covering intervals of N}Let $\mathscr{C}\subseteq\mathscr{P}_{n}^{d}$
with $n\geq3$. For any $C\in\mathscr{C}$, let\nomenclature[F1 v_tilde(C, d)]{$\widetilde{v}\left(C,d\right)$}{The difference $\widetilde{v}\left(C,d\right)=\vol\left(\conv S_{n}^{d}+B^{d}\right)-\vol\left(\conv C+B^{d}\right)$ between the volume of the convex hull of the sausage and the volume of the convex hull of the packing (is defined as $0$ if $C$ is less dense than the sausage)}\nomenclature[F2 L_tilde(C)]{$\widetilde{L}\left(C\right)$}{The set of all integers from $n-\widetilde{r}\left(C,d\right)$ to $n$, inclusive (is defined as the empty set if $C$ is less dense than the sausage)}\nomenclature[F3 L_tilde(C_script)]{$\widetilde{L}\left(\mathscr{C}\right)$}{The collection of sets $\widetilde{L}\left(C\right)$ for all $C\in\mathscr{C}$}
\begin{align*}
\widetilde{r}\left(C,d\right) & :=\max\left\{ 0,\left\lceil \frac{\vol\left(\conv S_{n}^{d}+B^{d}\right)-\vol\left(\conv C+B^{d}\right)}{2\kappa_{d-1}}-1\right\rceil \right\} ,\\
\widetilde{L}\left(C\right) & :=\begin{cases}
\left\{ n-\widetilde{r}\left(C,d\right),\ldots,n-1,n\right\}  & \delta\left(C\right)>\delta\left(S_{n}^{d}\right)\\
\emptyset & \delta\left(C\right)\leq\delta\left(S_{n}^{d}\right)
\end{cases}.
\end{align*}
\end{defn}

A brief calculation shows that $\widetilde{r}\left(C,d\right)$ is
the maximum number of spheres that can be removed from $C$ so that
the approximate density upper bound $\widetilde{\delta}\left(p_{k}\left(C\right)\right)>\delta\left(S_{n-k}^{d}\right)$
holds. We show that for all $N\geq516,\!946$ there exist $m\in\mathbb{N}$
and $\mathbf{h}\in\left\{ 0,\ldots,\left\lfloor \frac{m-1}{2}\right\rfloor \right\} ^{3}$
such that $N\in\widetilde{L}\left(t_{\mathbf{h}}^{3}\left(Y_{m}\right)\cap D_{4}\right)$,
then there exists a full-dimensional packing (namely $t_{\mathbf{h}}^{3}\left(Y_{m}\right)\cap D_{4}$)
of $N$ points that is denser than $S_{N}^{4}$. The proof of Theorem
\ref{thm: MAIN THEOREM: N*_4 upper bound} is a direct consequence
of the following covering inclusions. 
\begin{lem}
\label{lem: Covering inclusion using t_=00007Bh=00007D^=00007B3=00007D(Y_m), with truncations}Let
$n^{*}=516,\!946$ and $N^{*}:=459,\!118,\!697$, then for all $n^{*}\leq N<N^{*}$
we have 
\begin{equation}
N\subseteq\bigcup_{m=1}^{\infty}\bigcup_{\mathbf{h}\in\left\{ 0,\ldots,\left\lfloor \frac{m-1}{2}\right\rfloor \right\} ^{3}}\widetilde{L}\left(t_{\mathbf{h}}^{3}\left(Y_{m}\right)\cap D_{4}\right).\label{eq: Covering inclusion using t_=00007Bh=00007D^=00007B3=00007D(Y_m), with truncations}
\end{equation}
\end{lem}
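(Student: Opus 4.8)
The plan is to recast the covering inclusion (\ref{eq: Covering inclusion using t_=00007Bh=00007D^=00007B3=00007D(Y_m), with truncations}) as a statement that a family of integer intervals leaves no gaps. For each admissible pair $(m,\mathbf{h})$ the quantities $n(m,\mathbf{h}):=G(t_{\mathbf{h}}^3(Y_m))$ and $\widetilde{r}(t_{\mathbf{h}}^3(Y_m)\cap D_4,4)$ are given in closed form by Lemma \ref{lem: G((t_h)^3(Y_m)) and Steiner's formula, three facets} and Definition \ref{def: Covering intervals of N}, so $\widetilde{L}(t_{\mathbf{h}}^3(Y_m)\cap D_4)$ is the interval of integers with right endpoint $n(m,\mathbf{h})$ and length $\widetilde{r}+1$ (or empty). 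Listing the nonempty intervals in increasing order of right endpoint, their union contains $[n^*,N^*)$ precisely when each interval's left endpoint is at most one more than the largest right endpoint among the previously listed intervals, while the family reaches down to $n^*$ and up to at least $N^*-1$. Thus the lemma reduces to verifying this no-gap condition, i.e. for consecutive right endpoints $n_j<n_{j+1}$ that $n_{j+1}-n_j\le \widetilde{r}_{j+1}+1$, together with the two endpoint checks.

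I would first dispose of the gaps \emph{between} levels. By (\ref{eq: G((t_h)^3(X_m)), the number of points in the truncated 24-cell, three facets}) one has $G(Y_{m+1})-G(Y_m)=O(m^3)$, whereas at a fixed level the maximal truncation $|\mathbf{h}|\sim\tfrac{3m}{2}$ removes $\Theta(m^4)$ lattice points, so the $n$-ranges swept out by consecutive levels overlap by a wide margin for every relevant $m$. The delicate part is \emph{within} a fixed level: as $|\mathbf{h}|$ is incremented one coordinate at a time the attainable values $n(m,\mathbf{h})$ are spaced $O(m^3)$ apart, and these steps must be bridged by the interval lengths. The governing heuristic is that $\widetilde{r}$ is a fixed positive fraction of $n$ --- this slack is created by the gap between the $D_4$ density $\tfrac{\pi^2}{16}\approx0.617$ and the limiting sausage density $\tfrac{\kappa_4}{2\kappa_3}=\tfrac{3\pi}{16}\approx0.589$ --- so for large $m$ the reach $\widetilde{r}=\Theta(m^4)$ dominates the step $O(m^3)$ and chaining is automatic; indeed, once $\widetilde{r}(Y_m\cap D_4,4)\ge G(Y_{m+1})-G(Y_m)$ the untruncated polytopes $Y_m$ already chain by themselves, which is exactly the regime $N\ge N^*$ handled separately.

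The main obstacle is the bottom of the range, near the level with $4m^4\approx n^*$ (so $m$ around $19$), combined with the heavily truncated packings. Two effects work against us there: the relative slack $\widetilde{r}/n$ shrinks as truncation pushes the density toward that of the sausage, and $\widetilde{L}$ collapses to $\emptyset$ once a packing is no denser than $S_n^4$, so the gaps left by such packings must be covered from less-truncated packings at the same or an adjacent level. In this window the asymptotic estimate above is too crude and the no-gap inequalities have to be checked one at a time over the finite parameter set $\{(m,\mathbf{h}):m\lesssim104,\ \mathbf{h}\in\{0,\dots,\lfloor\frac{m-1}{2}\rfloor\}^3\}$. Because $n(m,\mathbf{h})$ and $\widetilde{r}(m,\mathbf{h})$ carry the transcendental constants $\pi$, $\arccos\tfrac13$, and $\arctan(3-2\sqrt2)$ from (\ref{eq: Steiner polynomial for (t_-h)^3(X_m) + B^4, three facets, in terms of m and h}), the comparisons $\widetilde{\delta}(p_k(\cdot))\gtrless\delta(S_n^4)$ cannot be resolved by exact rational arithmetic; I would therefore evaluate every volume, hence every $\widetilde{r}$, using rigorous interval arithmetic so that each individual inequality is certified with a guaranteed sign. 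The value $N^*=459{,}118{,}697$ is chosen precisely as the threshold past which the clean asymptotic chaining takes over, leaving this lemma to dispatch only the finite initial segment $[n^*,N^*)$.
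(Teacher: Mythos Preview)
Your proposal is correct and follows essentially the same approach as the paper: both reduce the covering statement to a finite verification over the admissible pairs $(m,\mathbf{h})$ and certify the required inequalities using rigorous interval arithmetic, since the volumes carry the transcendental constants $\pi$, $\arccos\tfrac13$, and $\arctan(3-2\sqrt2)$. Your chaining formulation (sorting the intervals $\widetilde{L}$ by right endpoint and checking $n_{j+1}-n_j\le\widetilde{r}_{j+1}+1$) is logically equivalent to the paper's organization, which simply loops over each $N\in\{n^*,\dots,N^*-1\}$ and searches the finitely many $m$ satisfying $G(t_{\mathbf{m}_{\max}}^3(Y_m))-\widetilde{r}_{\max}\le N\le G(Y_m)$ for a witness $\mathbf{h}$; your asymptotic heuristics explaining why the intervals eventually chain automatically are helpful context that the paper leaves implicit.
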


\begin{lem}
\label{lem: Covering inclusion using Y_m, no truncations}Let $N^{*}$
be as above, then for all $N\geq N^{*}$ we have 
\begin{equation}
N\subseteq\bigcup_{m=1}^{\infty}\widetilde{L}\left(Y_{m}\cap D_{4}\right).\label{eq: Covering inclusion using Y_m, no truncations}
\end{equation}
\end{lem}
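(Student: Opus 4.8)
The plan is to exploit the fact that each set $\widetilde{L}\left(Y_{m}\cap D_{4}\right)$ from Definition \ref{def: Covering intervals of N} is an interval of consecutive integers with largest element $G\left(Y_{m}\right)$ and $\widetilde{r}\left(Y_{m}\cap D_{4},4\right)+1$ elements in total. Since the tops $G\left(Y_{m}\right)$ strictly increase in $m$, the union $\bigcup_{m}\widetilde{L}\left(Y_{m}\cap D_{4}\right)$ will contain every integer above some threshold as soon as consecutive intervals overlap, i.e. as soon as the bottom $G\left(Y_{m}\right)-\widetilde{r}\left(Y_{m}\cap D_{4},4\right)$ of the $m$-th interval is at most one more than the top $G\left(Y_{m-1}\right)$ of the $(m-1)$-st interval. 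So the whole statement reduces to a single inequality in $m$, valid for all sufficiently large $m$, together with the identification of the threshold.

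First I would make $\widetilde{r}$ explicit. Because the vertices of $Y_{m}$ lie in $D_{4}$ we have $\conv\left(Y_{m}\cap D_{4}\right)=Y_{m}$, and from \eqref{eq: Sausage density} the sausage convex hull satisfies $\vol\left(\conv S_{n}^{4}+B^{4}\right)=\kappa_{4}+2\left(n-1\right)\kappa_{3}$ with $n=G\left(Y_{m}\right)$. Substituting \eqref{eq: Gandini and Zucco G(Y_m)} and \eqref{eq: Gandini and Zucco vol(Y_m + B^4)} into $D_{m}:=\vol\left(\conv S_{G\left(Y_{m}\right)}^{4}+B^{4}\right)-\vol\left(Y_{m}+B^{4}\right)$, the $\tfrac{1}{2}\pi^{2}$ terms cancel and $D_{m}$ becomes an explicit quartic in $m$ with leading coefficient $\tfrac{32\pi}{3}-32=32\left(\tfrac{\pi}{3}-1\right)>0$ (positive because $Y_{m}\cap D_{4}$ beats the sausage). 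Dividing by $2\kappa_{3}=\tfrac{8\pi}{3}$, set $R\left(m\right):=\tfrac{3D_{m}}{8\pi}$, a quartic whose leading coefficient is $4-\tfrac{12}{\pi}>0$; then $\widetilde{r}\left(Y_{m}\cap D_{4},4\right)=\left\lceil R\left(m\right)-1\right\rceil$ for every $m$ with $D_{m}>0$.

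Next comes the overlap inequality, which is the technical core. A direct expansion gives $G\left(Y_{m}\right)-G\left(Y_{m-1}\right)=16m^{3}+8m$, so consecutive intervals meet precisely when $\widetilde{r}\left(Y_{m}\cap D_{4},4\right)\geq 16m^{3}+8m-1$. Using $\widetilde{r}=\left\lceil R-1\right\rceil$ and the fact that the right-hand side is an integer, this is equivalent to $\Phi\left(m\right):=R\left(m\right)-\left(16m^{3}+8m\right)>-1$. Here $\Phi$ is again a quartic with leading coefficient $4-\tfrac{12}{\pi}>0$; I would show $\Phi'\left(m\right)>0$ once $m$ exceeds roughly $78$ (the cubic $\Phi'$ has a single positive root there) and verify $\Phi\left(105\right)>0$, whence $\Phi\left(m\right)>0>-1$ for all $m\geq 105$. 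Finally I would set $N^{*}=G\left(Y_{104}\right)-\widetilde{r}\left(Y_{104}\cap D_{4},4\right)$, check it equals $459{,}118{,}697$ and that interval $104$ is nonempty (it is, since $Y_{m}\cap D_{4}$ is denser than the sausage already for $m\geq17$), so that $\widetilde{L}\left(Y_{104}\cap D_{4}\right)=\left\{N^{*},\ldots,G\left(Y_{104}\right)\right\}$; the established overlaps for $m\geq105$ then telescope to give $\bigcup_{m\geq104}\widetilde{L}\left(Y_{m}\cap D_{4}\right)\supseteq\left\{N\in\mathbb{Z}\mid N\geq N^{*}\right\}$, which is the claim.

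The main obstacle is the rigor of the quartic inequality $\Phi\left(m\right)>-1$ for $m\geq105$ and the exact determination of $N^{*}$: the coefficients of $R$ (hence of $\Phi$) involve $\pi$, $\sqrt{2}$, $\sqrt{3}$, and $\arccos\left(\tfrac{1}{3}\right)$, so certifying the sign of $\Phi\left(105\right)$, the location of the root of $\Phi'$, and the precise integer value $G\left(Y_{104}\right)-\left\lceil R\left(104\right)-1\right\rceil=459{,}118{,}697$ all require controlled (interval) arithmetic rather than exact symbolic cancellation. A secondary but genuine subtlety is bookkeeping the ceiling: one must keep every comparison between integer quantities, which is why the overlap condition is phrased as the strict bound $\Phi\left(m\right)>-1$ instead of $R\left(m\right)\geq 16m^{3}+8m$.
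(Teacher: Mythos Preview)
Your approach is correct and closely parallels the paper's: both reduce the lemma to showing that consecutive intervals $\widetilde{L}\left(Y_{m-1}\cap D_{4}\right)$ and $\widetilde{L}\left(Y_{m}\cap D_{4}\right)$ overlap for all sufficiently large $m$, then pin down the threshold via the packing $Y_{104}\cap D_{4}$. The difference is only in how the overlap is certified. You work directly with $\widetilde{r}$, rewriting the overlap condition as the single quartic inequality $\Phi\left(m\right)=R\left(m\right)-\left(16m^{3}+8m\right)>-1$ and establishing it for $m\geq 105$ by elementary calculus plus one numerical evaluation. The paper instead forms the rational function
\[
f_{Y}\left(m\right)=\widetilde{\delta}\left(Y_{m}\cap D_{4},\,G\left(Y_{m-1}\right)+1\right)
\]
and shows, via a crude bound on the coefficients of the numerator of $f_{Y}'$, that $f_{Y}$ is increasing for $m\geq 43$, after which the boundary check at $m=104$ is done numerically. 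Your polynomial formulation is marginally more direct, since $\Phi\left(m\right)>-1$ \emph{is} the overlap condition, whereas the paper's monotonicity of $f_{Y}$ must still be played off against the (also increasing) sausage density $\delta\left(S_{n}^{4}\right)$ before overlap follows. Both routes ultimately need controlled (interval) arithmetic for the final numerical identifications, exactly as you flag.
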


We use interval arithmetic on a computer for the proof of Lemma \ref{lem: Covering inclusion using t_=00007Bh=00007D^=00007B3=00007D(Y_m), with truncations}.
Information about interval arithmetic can be found in \cite{MooreKearfottCloud2009,Tucker2011}
and the documentation \cite{JuliaIntervalArithmetic}. Ordinarily,
computers perform calculations on floating-point numbers, the set
of which---bar some exceptions such as $+\infty$ and $-\infty$
(see \cite{Kneusel2017} for more details)---is a finite subset of
the real numbers and is therefore unsuitable for rigorous conclusions
on $\mathbb{R}$. Interval arithmetic is a solution to this issue
by performing calculations not on individual numbers, but on ranges
$\left[a,b\right]$ known as intervals, where $a$ and $b$ are floating-point
numbers. Functions are defined to work with intervals such that if
$x\in\left[a,b\right]$ then $f\left(x\right)\in\left[c,d\right]$
for certain floating-point numbers $c$ and $d$ that depend on $a$,
$b$, and $f$, whether or not $x$ or $f\left(x\right)$ are themselves
floating-point numbers. We use the Julia programming language (2023,
version 1.8.5) \cite{Julia} together with the Julia Standard Library
package \texttt{LinearAlgebra} and the third-party packages \texttt{IntervalArithmetic}
by Benet and Sanders (2022, version 0.20.8) \cite{JuliaIntervalArithmetic}
and Readables by Sarnoff (2019, version 0.3.3)\cite{JuliaReadables}. 
\begin{proof}[Proof of Lemma \ref{lem: Covering inclusion using t_=00007Bh=00007D^=00007B3=00007D(Y_m), with truncations}]
In (\ref{eq: Covering inclusion using t_=00007Bh=00007D^=00007B3=00007D(Y_m), with truncations})
the outer union runs over all $m\in\mathbb{N}$, hence for computer
purposes we must restrict it to a finite subset. Let $m\in\mathbb{N}$,
$m_{\max}:=\left\lfloor \frac{m-1}{2}\right\rfloor $, $\mathbf{m}_{\max}:=\left(\left\lfloor \frac{m-1}{2}\right\rfloor ,\left\lfloor \frac{m-1}{2}\right\rfloor ,\left\lfloor \frac{m-1}{2}\right\rfloor \right)$,
and $\widetilde{r}_{\max}:=\widetilde{r}\left(t_{\mathbf{m}_{\max}}^{3}\left(Y_{m}\right)\cap D_{4},4\right)$.
Then 
\begin{align*}
\min\bigcup_{\mathbf{h}\in\left\{ 0,\ldots,\left\lfloor \frac{m-1}{2}\right\rfloor \right\} ^{3}}\widetilde{L}\left(t_{\mathbf{h}}^{3}\left(Y_{m}\right)\cap D_{4}\right) & =G\left(t_{\mathbf{m}_{\max}}^{3}\left(Y_{m}\right)\right)-\widetilde{r}_{\max},\\
\max\bigcup_{\mathbf{h}\in\left\{ 0,\ldots,\left\lfloor \frac{m-1}{2}\right\rfloor \right\} ^{3}}\widetilde{L}\left(t_{\mathbf{h}}^{3}\left(Y_{m}\right)\cap D_{4}\right) & =G\left(Y_{m}\right),
\end{align*}
 so for each $N\in\mathbb{N}$ it suffices to check in (\ref{eq: Covering inclusion using t_=00007Bh=00007D^=00007B3=00007D(Y_m), with truncations})
only those values of $m$ for which 
\begin{equation}
G\left(t_{\mathbf{m}_{\max}}^{3}\left(Y_{m}\right)\right)-\widetilde{r}_{\max}\leq N\leq G\left(Y_{m}\right).\label{eq: Range of m needed for a given n}
\end{equation}
 Since $G\left(t_{\mathbf{m}_{\max}}^{3}\left(Y_{m}\right)\right)>G\left(Y_{\left\lfloor \frac{m}{2}\right\rfloor }\right)$
and $G\left(Y_{m}\right)\rightarrow\infty$, for each $N$ there exists
only a finite number of $m$ that satisfy (\ref{eq: Range of m needed for a given n}).
Therefore we use interval arithmetic to show that for each $N\in\left\{ n^{*},n^{*}+1,\ldots,N^{*}-1\right\} $
there exists some $m$ satisfying (\ref{eq: Range of m needed for a given n})
(due to the nature of interval arithmetic, the precise inequalities
used in the code are more complicated) and $\mathbf{h}\in\left\{ 0,\ldots,\left\lfloor \frac{m-1}{2}\right\rfloor \right\} ^{3}$
such that $N\in\widetilde{L}\left(t_{\mathbf{h}}^{3}\left(Y_{m}\right)\cap D_{4}\right)$. 
\end{proof}
For the proof of Lemma \ref{lem: Covering inclusion using Y_m, no truncations},
we first note that for a fixed $m$, $k\mapsto\widetilde{\delta}\left(p_{k}\left(Y_{m}\cap D_{4}\right)\right)$
is decreasing. Then we establish that the ``worst-case packing''
$p_{G\left(Y_{m}\right)-G\left(Y_{m-1}\right)-1}\left(Y_{m}\cap D_{4}\right)$,
in the sense that it has the lowest approximate density among all
packings of the form $p_{k}\left(Y_{m}\cap D_{4}\right)$, $k\in\left\{ 0,\,\ldots,\,G\left(Y_{m}\right)-G\left(Y_{m-1}\right)-1\right\} $,
has density that monotonically increases in $m$ for all $m\geq43$.
Hence the sets $\widetilde{L}\left(p_{k}\left(Y_{m}\cap D_{4}\right)\right)$,
where $m\geq43$ and $k\in\left\{ 0,\,\ldots,\,G\left(Y_{m}\right)-G\left(Y_{m-1}\right)-1\right\} $,
contain all $N\geq G\left(Y_{43}\right)$. 
\begin{proof}[Proof of Lemma \ref{lem: Covering inclusion using Y_m, no truncations}]
From (\ref{eq: Gandini and Zucco G(Y_m)}) and (\ref{eq: Gandini and Zucco vol(Y_m + B^4)}),
we have 
\begin{align*}
G\left(p_{G\left(Y_{m}\right)-G\left(Y_{m-1}\right)-1}\left(Y_{m}\cap D_{4}\right)\right) & =4m^{4}-8m^{3}+8m^{2}-2,\\
\widetilde{\delta}\left(p_{G\left(Y_{m}\right)-G\left(Y_{m-1}\right)-1}\left(Y_{m}\cap D_{4}\right)\right) & =f_{Y}\left(m\right),
\end{align*}
 where 
\[
f_{Y}\left(x\right):=\frac{\left[4x^{4}-8x^{3}+8x^{2}-2\right]\cdot\frac{1}{2}\pi^{2}}{32x^{4}+64\sqrt{2}x^{3}+16\sqrt{3}\pi x^{2}+192\left(\arccos\left(\frac{1}{3}\right)-\frac{\pi}{3}\right)x+\frac{1}{2}\pi^{2}}.
\]
 Then 
\[
f_{Y}'\left(x\right)=\frac{g_{Y}\left(x\right)}{\left(32x^{4}+64\sqrt{2}x^{3}+16\sqrt{3}\pi x^{2}+192\left(\arccos\left(\frac{1}{3}\right)-\frac{\pi}{3}\right)x+\frac{1}{2}\pi^{2}\right)^{2}}\cdot\frac{1}{2}\pi^{2},
\]
 where $g_{Y}\left(x\right)=256\left(\sqrt{2}+1\right)x^{6}+a_{5}x^{5}+a_{4}x^{4}+a_{3}x^{3}+a_{2}x^{2}+a_{1}x+a_{0}$.
Each $a_{i}$, $i\in\left\{ 0,\ldots,5\right\} $, is bounded by $\left|a_{i}\right|\leq64\sqrt{2}\cdot24+8\cdot192\sqrt{2}=4344.46\ldots$,
then $g_{Y}\left(x\right)\geq256\left(\sqrt{2}+1\right)\left(x-42.176\ldots\right)x^{5}>0$
for all $x\geq43$, so $f_{Y}$ is monotonically increasing for all
$m\geq43$. We conclude the proof by observing that 
\begin{align*}
\widetilde{\delta}\left(p_{17,910,584}\left(Y_{104}\cap D_{4}\right)\right)=\widetilde{\delta}\left(Y_{104}\cap D_{4},\,459,\!118,\!697\right) & \approx0.589\,048\,622\,8\\
<\delta\left(S_{459,118,697}^{4}\right) & \approx0.589\,048\,623\,1,\\
\widetilde{\delta}\left(p_{17,910,583}\left(Y_{104}\cap D_{4}\right)\right)=\widetilde{\delta}\left(Y_{104}\cap D_{4},\,459,\!118,\!698\right) & \approx0.589\,048\,624\,1\\
>\delta\left(S_{459,118,698}^{4}\right) & \approx0.589\,048\,623\,1.
\end{align*}
\end{proof}
\begin{rem}
The code used in the proof of Lemma \ref{lem: Covering inclusion using t_=00007Bh=00007D^=00007B3=00007D(Y_m), with truncations}
shows that $516,\!945$ is \emph{not} in any $\widetilde{L}\left(t_{\mathbf{h}}^{3}\left(Y_{m}\right)\cap D_{4}\right)$,
hence $516,\!946$ is the best possible using our specific methods.
The polytope $t_{\mathbf{h}}^{3}\left(Y_{m}\right)$ contains six
vertices that are unmodified by the facet truncations, and by truncating
these vertices it is possible to improve our bounds for $n_{4}^{*}$
and $N_{4}^{*}$. 
\end{rem}

\section{Acknowledgements }

The author wishes to thank Martin Henk for his review and comments
of this paper. Additionally, the author wishes to thank Martin Henk,
Duncan Clark, Ansgar Freyer, and Cemile Kürko\u{g}lu for their review
and comments of portions of my PhD thesis-in-progress, from which
this paper is derived from.

\pagebreak{}

\bibliographystyle{plain}
\bibliography{Chun_Sausage_Catastrophe}

\pagebreak{}

\settowidth{\nomlabelwidth}{$\left|\mathbf{v}\right|$, $\mathbf{v}$ is a vector}
\printnomenclature{}

\pagebreak{}

\section{Appendix }

\subsection{\protect\label{subsec: The vertices of t_=00007Bh=00007D^=00007B3=00007D(Y_m) are at points of D_4}The
vertices of $t_{\mathbf{h}}^{3}\left(Y_{m}\right)$ }

In this subsection we prove the claim which we stated immediately
above Section \ref{subsec: Steiner polynomial for the truncation of a single facet}:
For all $m\in\mathbb{N}$ and $h\in\left\{ 0,\ldots,m\right\} $,
the set of vertices of $t_{h}^{3}\left(Y_{m}\right)$ is a subset
of $D_{4}$. 
\begin{lem}
\label{lem: Intersections of faces of t_=00007Bh=00007D(Y_m) with D_4 yield densest lattices}Let
$m\in\mathbb{N}_{0}$, $\mathbf{h}\in\left\{ 0,\ldots,\left\lfloor \frac{m-1}{2}\right\rfloor \right\} ^{3}$,
and $F$ be any face in $t_{\mathbf{h}}^{3}\left(Y_{m}\right)$. Then
with one exception, 
\[
\aff\left\{ F\cap D_{4}\right\} \cong\begin{cases}
\left\{ \mathbf{0}\right\}  & \dim F=0\\
\mathbb{Z} & \dim F=1\\
A_{2} & \dim F=2\\
D_{3} & \dim F=3\\
D_{4} & \dim F=4
\end{cases},
\]
and all lattices have minimal norm $4$. The exceptional face is any
square face $Q_{m,h}$ of $t_{h}^{3}\left(Y_{m}\right)$ for which
$Q_{m,h}\cap D_{4}\cong\mathbb{Z}^{2}$ with minimal norm $4$. 
\end{lem}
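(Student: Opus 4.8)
The plan is to turn the statement into a short, finite verification over the face types of $t_{\mathbf{h}}^3(Y_m)$ already enumerated in Section~\ref{sec: The truncation of a single facet}, reading off each sublattice of $D_4$ directly from the normals that cut out the face. First I would record that the generators in the braces of~(\ref{eq: Y_m vectors}) produce $D_4=\{x\in\mathbb{Z}^4 : x_1\equiv x_2\equiv x_3\equiv x_4 \pmod 2\}$, whose $24$ minimal vectors are exactly the vertices of $Y_1$ (the eight $\pm2\mathbf{e}_i$ and the sixteen $(\pm1,\pm1,\pm1,\pm1)$), all of squared length $4$; in particular $D_4$ has minimal norm $4$, while the facet outward normals are the vectors $\mathbf{u}^i=\tfrac{1}{\sqrt2}(\pm\mathbf{e}_a\pm\mathbf{e}_b)$. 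Since the symmetry group of $Y_m$ fixes $D_4$ setwise and acts transitively on the faces of each type, it suffices to treat one representative per dimension and type, and these are precisely the faces written out with explicit coordinates in Section~\ref{sec: The truncation of a single facet}.

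The structural device I would use is that every $k$-face $F$ is the intersection of $t_{\mathbf{h}}^3(Y_m)$ with $4-k$ of its supporting hyperplanes $H_i(\lambda_i)=H(\mathbf{u}^i,\sqrt2\lambda_i)$, where each depth $\lambda_i\in\{m,\,m-h_1,\,m-h_{16},\,m-h_{17}\}$ is an \emph{integer}. Hence $\aff F=\bigcap_i\{z:\mathbf{u}^i\cdot z=\sqrt2\lambda_i\}$, and $F\cap D_4$ is a coset of the sublattice
\[
\Lambda_F:=\{z\in D_4 : \mathbf{u}^i\cdot z=0 \text{ for the } 4-k \text{ relevant } i\}.
\]
This coset is nonempty because each listed representative has a vertex all of whose coordinates share a common parity and therefore lies in $D_4$; this observation also disposes of the case $\dim F=0$ (a single point, $\cong\{\mathbf{0}\}$) and, taken over all vertices, proves the claim quoted above Subsection~\ref{subsec: Steiner polynomial for the truncation of a single facet} that the vertices of $t_{\mathbf{h}}^3(Y_m)$ lie in $D_4$. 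It then remains to identify $\Lambda_F$ up to isometry, which depends only on $4-k$ and on the pairwise angles of the constraining normals, and these angles are already recorded in Lemma~\ref{lem: Areas and external angles of the faces of the truncated 24-cell} (adjacent octahedral facets give $\mathbf{u}^i\cdot\mathbf{u}^j=\tfrac12$, whereas the square faces come from orthogonal normals $\mathbf{u}^i\cdot\mathbf{u}^j=0$).

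For the identification I would exhibit the minimal vectors of each $\Lambda_F$. When $\dim F=4$ we have $\Lambda_F=D_4$. When $\dim F=3$, a single normal such as $\mathbf{u}^1=\tfrac{1}{\sqrt2}(1,1,0,0)$ yields $\{z\in D_4:z_1+z_2=0\}$, which contains the twelve squared-length-$4$ vectors $(0,0,\pm2,0)$, $(0,0,0,\pm2)$, $(\pm1,\mp1,\pm1,\pm1)$ — the kissing configuration of $D_3$. For a triangular $2$-face the two normals meet at angle $\tfrac{\pi}{3}$ and carve out a rank-$2$ lattice with six minimal vectors of norm $4$, namely $A_2$; for the exceptional square faces $Q_{m,h}$ the two normals $\mathbf{u}^1,\mathbf{u}^{10}$ are orthogonal, so $\aff Q_{m,h}$ fixes $z_1$ and $z_2$ and leaves $z_3,z_4$ free, and the $D_4$ points form a square grid of spacing $2$, i.e.\ $\mathbb{Z}^2$ with minimal norm $4$. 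Finally, for $\dim F=1$ three normals leave a rank-$1$ lattice generated by a single norm-$4$ vector, so $\Lambda_F\cong\mathbb{Z}$. In every case the minimal norm is $4$.

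The main obstacle I anticipate is organizational rather than conceptual. One must check that the types enumerated for the single-facet truncation genuinely exhaust the faces of the triple truncation $t_{\mathbf{h}}^3(Y_m)$; this is exactly where Proposition~\ref{prop: The three truncated facets of the 24-cell are still disjoint} is needed, since it guarantees the three truncated facets remain pairwise disjoint and so no new face types arise from collisions. One must also confirm that the square faces $Q_{m,h}$ are the \emph{only} faces whose two defining normals are orthogonal, so that $\mathbb{Z}^2$ is the sole exception to the otherwise uniform $A_2$ pattern in dimension $2$. The one place demanding genuine care is pinning down the \emph{isometry} type (not merely the rank and minimal norm) of each $\Lambda_F$, which I would settle by checking that the minimal vectors listed above actually generate $\Lambda_F$ and realize the correct kissing number $12$, $6$, $4$, or $2$ for $D_3$, $A_2$, $\mathbb{Z}^2$, $\mathbb{Z}$ respectively.
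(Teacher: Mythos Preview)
Your approach is correct and genuinely different from the paper's. The paper proceeds by explicit coordinate computation on one representative per face type: it writes down the vertices of each new vertex, edge, and $2$-face of $t_h^3(Y_m)$ as functions of $m$ and $h$, checks that the vertices lie in $D_4$, and then for edges and faces exhibits specific additional $D_4$ points at distance $2$ (respectively, forming an equilateral triangle or square of side $2$). Your argument instead observes that $\aff(F)\cap D_4$ is a coset of the sublattice $\Lambda_F=D_4\cap\bigcap_i(\mathbf{u}^i)^{\perp}$, so the isometry type of $\aff(F)\cap D_4$ is independent of the truncation depths $h_i$ and depends only on the configuration of the normals $\mathbf{u}^i$ relative to $D_4$; you then identify $\Lambda_F$ by counting its norm-$4$ vectors. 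This buys you a cleaner, essentially depth-free classification with less coordinate bookkeeping, at the cost of invoking the (easy) characterization of $D_3$, $A_2$, $\mathbb{Z}^2$ by kissing number among rank-$k$ lattices of minimal norm $4$. Two small points to tighten: the nonemptiness of the coset (equivalently, that every vertex of $t_{\mathbf{h}}^3(Y_m)$ lies in $D_4$) is not automatic and is exactly the paper's Claim~1, so you should state that this is verified by inspecting the explicit representative vertices listed in Section~\ref{sec: The truncation of a single facet}; and your phrase ``triangular $2$-face'' should be broadened to all four non-square $2$-face types $H_{m,h}$, $t_h^1(T_m^{\triangledown})$, $t_h^0(T_m^{\triangle})$, $T_m$, each of which (by Lemma~\ref{lem: Areas and external angles of the faces of the truncated 24-cell}) is cut out by two normals at angle $\pi/3$ and hence yields $A_2$ by your argument.
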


In other words, the intersection of any face $F$ (aside from $Q_{m,h}$)
of $t_{\mathbf{h}}^{3}\left(Y_{m}\right)$ with $D_{4}$ yields an
unscaled copy of a densest lattice in the appropriate dimension, and
the minimal norm of $4$ means that the maximal density can be achieved
by unit balls instead of scaled balls. In particular, points of $D_{4}$
reside at the vertices of these squares, hexagons, and other faces
of $t_{\mathbf{h}}^{3}\left(Y_{m}\right)$ and so we can write $\vol\left(\conv\left(t_{h}^{3}\left(Y_{m}\right)\cap D_{4}\right)+B^{4}\right)=\vol\left(t_{h}^{3}\left(Y_{m}\right)+B^{4}\right)$. 

Some remarks about this lemma and its proof are in order. Due to the
disjointness property (Proposition \ref{prop: The three truncated facets of the 24-cell are still disjoint}),
we can assume that the polytope is $t_{h}^{3}\left(Y_{m}\right)$,
i.e., $t_{\mathbf{h}}^{3}\left(Y_{m}\right)$ when $\mathbf{h}=\left\{ 0,\ldots,\left\lfloor \frac{m-1}{2}\right\rfloor \right\} \times\left\{ 0\right\} \times\left\{ 0\right\} $.
The $\dim F=4$ case is trivial. The case when $F$ is a face of $Y_{m}$
was established by Gandini and Zucco \cite{GandiniZucco1992}; this
case also covers all facets of $t_{h}^{3}\left(Y_{m}\right)$ because
a truncated facet $Y_{m}$ has the same affine hull as the corresponding
facet of $Y_{m}$. Hence we need to show the following dimension $0$,
$1$, and $2$ claims: 
\begin{enumerate}
\item Any vertex $\mathbf{v}$ of $t_{h}^{3}\left(Y_{m}\right)$ is a point
of $D_{4}$, that is, has integer coordinates with all even or all
odd terms. 
\item Any edge $E$ of $t_{h}^{3}\left(Y_{m}\right)$ contains two points
of $D_{4}$ of distance $2$ from each other. 
\item Any face $F$ of $t_{h}^{3}\left(Y_{m}\right)$ contains an equilateral
triangle of side length $2$ and vertices in $D_{4}$, aside from
the exceptional faces. 
\end{enumerate}
Claims 2 and 3 imply that $\aff\left(E\right)\cap D_{4}\cong\mathbb{Z}$
and $\aff\left(F\right)\cap D_{4}\cong A_{2}$ respectively. Moreover,
the three facets $X_{m,h_{1}}^{1}$, $X_{m,h_{16}}^{16}$, and $X_{m,h_{17}}^{17}$
(defined in Lemma \ref{def: Truncation of a single facet from X_m})
of the truncated $24$-cell $t_{\mathbf{h}}^{3}\left(Y_{m}\right)$
are disjoint for all $\mathbf{h}\in\left\{ 0,\ldots,\left\lfloor \frac{m-1}{2}\right\rfloor \right\} $,
so the conclusion of Lemma \ref{lem: Intersections of faces of t_=00007Bh=00007D(Y_m) with D_4 yield densest lattices}
also holds for the triple facet truncation $t_{\mathbf{h}}^{3}\left(Y_{m}\right)$
with $\mathbf{h}\in\left\{ 0,\ldots,\left\lfloor \frac{m-1}{2}\right\rfloor \right\} $. 
\begin{proof}[Proof of Claim 1]
Consider the truncated $24$-cell $t_{-h}^{3}\left(Y_{m}\right)$.
It suffices to show that the $24$ vertices of the truncated octahedron
$X_{m,h}^{1}$ are in $D_{4}$. Each vertex $\mathbf{v}^{m,h}$ of
$X_{m,h}^{1}$ is part of an edge $E_{m}$ of $Y_{m}$ whose endpoints
consist of one vertex of $X_{m}^{1}$ and the other vertex on the
hyperplane passing through $\mathbf{0}$ and perpendicular to the
centroid $\mathbf{c}=m\left(1,1,0,0\right)^{\mathsf{T}}$ of $Y_{m}$.
We wish to show that $\mathbf{v}^{m,h}\in D_{4}$, and by symmetry
of the octahedron and $24$-cell, it suffices to check this statement
for only \emph{one} such edge $E_{m,h}$. Let 
\[
E_{m}=\conv\left\{ m\begin{pmatrix}2\\
0\\
0\\
0
\end{pmatrix},m\begin{pmatrix}\hphantom{+}1\\
-1\\
\hphantom{+}1\\
-1
\end{pmatrix}\right\} ;
\]
 note that $\left(2,0,0,0\right)^{\mathsf{T}}\in X_{m}^{1}$ and $\left(1,-1,1,-1\right)^{\mathsf{T}}\cdot\mathbf{c}=0$.
Since $\dist\left(X_{m,h}^{1},X_{m}^{1}\right)=\sqrt{2}h$ and $\dist\left(\mathbf{0},Y_{m}\right)=\sqrt{2}m$,
the facet $X_{m,h}^{1}$ is $\frac{h}{m}$ of the way from $X_{m}^{1}$
to the origin. Then the point $\mathbf{v}^{m,h}$ is also $\frac{h}{m}$
of the way from $m\left(2,0,0,0\right)^{\mathsf{T}}$ to $m\left(1,-1,1,1\right)^{\mathsf{T}}$,
and so 
\[
\mathbf{v}^{m,h}=\left(1-\frac{h}{m}\right)\cdot m\begin{pmatrix}2\\
0\\
0\\
0
\end{pmatrix}+\frac{h}{m}\cdot m\begin{pmatrix}\hphantom{+}1\\
-1\\
\hphantom{+}1\\
-1
\end{pmatrix}=\begin{pmatrix}2m-h\\
-h\\
\hphantom{+}h\\
-h
\end{pmatrix}\in D_{4}.
\]
\end{proof}
For the second claim, we note that $t_{h}^{3}\left(Y_{m}\right)$
has four kinds of edges (see Subsubsection \ref{subsec: 1-dimensional edges of the truncated 24-cell}
for the details), but two kinds are subsets of existing edges in $Y_{m}$
so do not need to be rechecked. Similarly to the first claim, symmetry
considerations ensure that it is sufficient to show Claim 2 for the
representative edges $E_{m,h}^{\square}$ and $E_{m,h}^{\hexagon}$. 
\begin{proof}[Proof of Claim 2]
Consider the edge $E_{m,h}^{\square}$ with vertices 
\[
\mathbf{v}^{0}\in\conv\left\{ m\begin{pmatrix}2\\
0\\
0\\
0
\end{pmatrix},m\begin{pmatrix}\hphantom{+}1\\
-1\\
\hphantom{+}1\\
-1
\end{pmatrix}\right\} \qquad\text{and}\qquad\mathbf{v}^{1}\in\conv\left\{ m\begin{pmatrix}2\\
0\\
0\\
0
\end{pmatrix},m\begin{pmatrix}\hphantom{+}1\\
-1\\
\hphantom{+}1\\
\hphantom{+}1
\end{pmatrix}\right\} 
\]
 respectively. Then 
\[
\mathbf{v}^{0}=\begin{pmatrix}2m-h\\
-h\\
\hphantom{+}h\\
-h
\end{pmatrix}\qquad\text{and}\qquad\mathbf{v}^{1}=\begin{pmatrix}2m-h\\
-h\\
\hphantom{+}h\\
\hphantom{+}h
\end{pmatrix}\text{,}
\]
 so every point on $E_{m,h}^{\square}$ is of the form 
\[
\mathbf{v}^{\lambda}=\begin{pmatrix}2m-h\\
-h\\
\hphantom{+}h\\
\left(2\lambda-1\right)h
\end{pmatrix}.
\]
 In particular, 
\[
\mathbf{v}^{\frac{1}{h}}=\begin{pmatrix}2m-h\\
-h\\
\hphantom{+}h\\
\left(2\frac{1}{h}-1\right)h
\end{pmatrix}=\begin{pmatrix}2m-h\\
-h\\
\hphantom{+}h\\
\hphantom{m}2-h
\end{pmatrix}\in D_{4}
\]
 and in addition $\left|\mathbf{v}^{\frac{1}{h}}-\mathbf{v}^{0}\right|=2$. 

Now consider the edge $E_{m,h}^{\hexagon}$ with vertices 
\[
\mathbf{v}^{0}\in\conv\left\{ m\begin{pmatrix}2\\
0\\
0\\
0
\end{pmatrix},m\begin{pmatrix}\hphantom{+}1\\
-1\\
\hphantom{+}1\\
-1
\end{pmatrix}\right\} \qquad\text{and}\qquad\mathbf{v}^{1}\in\conv\left\{ m\begin{pmatrix}\hphantom{+}1\\
\hphantom{+}1\\
\hphantom{+}1\\
-1
\end{pmatrix},m\begin{pmatrix}\hphantom{+}1\\
-1\\
\hphantom{+}1\\
-1
\end{pmatrix}\right\} 
\]
 respectively. Then 
\[
\mathbf{v}^{0}=\begin{pmatrix}2m-h\\
-h\\
\hphantom{+}h\\
-h
\end{pmatrix}\qquad\text{and}\qquad\mathbf{v}^{1}=\begin{pmatrix}\hphantom{+}m\\
m-2h\\
\hphantom{+}m\\
-m
\end{pmatrix}\text{,}
\]
 so every point on $E_{m,h}^{\hexagon}$ is of the form 
\[
\mathbf{v}^{\lambda}=\begin{pmatrix}\left(2-\lambda\right)m-\left(1-\lambda\right)h\\
\lambda m-\left(1+\lambda\right)h\\
\lambda m+\left(1-\lambda\right)h\\
-\lambda m-\left(1-\lambda\right)h
\end{pmatrix}.
\]
 Let $\lambda=\frac{1}{m-h}$ for 
\[
\mathbf{v}^{\frac{1}{m-h}}=\begin{pmatrix}2m-\left(h+1\right)\\
-h+1\\
\hphantom{+}h+1\\
-\left(h+1\right)
\end{pmatrix}\in D_{4},
\]
 and finally $\left|\mathbf{v}^{0}-\mathbf{v}^{\frac{1}{m-h}}\right|=2$. 
\end{proof}
For the third claim, we observe that $t_{h}^{3}\left(Y_{m}\right)$
has five kinds of faces (see Subsubsection \ref{subsec: 2-dimensional faces of the truncated 24-cell}),
but only two of them are not subsets of faces in $Y_{m}$. This fact,
along with symmetry considerations, imply that it suffices to show
Claim 3 for the representative faces $Q_{m,h}$ and $H_{m,h}$. 
\begin{proof}[Proof of Claim 3]
Consider the square face $Q_{m,h}$ with vertices 
\[
\mathbf{v}^{1,1}=\begin{pmatrix}2m-h\\
-h\\
\hphantom{+}h\\
\hphantom{+}h
\end{pmatrix},\qquad\mathbf{v}^{-1,1}=\begin{pmatrix}2m-h\\
-h\\
-h\\
\hphantom{+}h
\end{pmatrix},\qquad\mathbf{v}^{1,-1}=\begin{pmatrix}2m-h\\
-h\\
\hphantom{+}h\\
-h
\end{pmatrix},\qquad\mathbf{v}^{-1,-1}=\begin{pmatrix}2m-h\\
-h\\
-h\\
-h
\end{pmatrix}.
\]
 Any point of $Q_{m,h}$ can be written as 
\[
\mathbf{v}^{\lambda_{1},\lambda_{2}}=\begin{pmatrix}2m-h\\
-h\\
\lambda_{1}h\\
\lambda_{2}h
\end{pmatrix}\qquad\text{for }\lambda_{1},\lambda_{2}\in\left[-1,1\right]\text{,}
\]
 then in particular for $\lambda_{1},\lambda_{2}=1-\frac{2}{h}$,
we have 
\[
\mathbf{v}^{1-\frac{2}{h},\,1-\frac{2}{h}}=\begin{pmatrix}2m-h\\
-h\\
\left(1-\frac{2}{h}\right)h\\
\left(1-\frac{2}{h}\right)h
\end{pmatrix}=\begin{pmatrix}2m-h\\
-h\\
h-2\\
h-2
\end{pmatrix}\in D_{4}.
\]
 Then $\conv\left\{ \mathbf{v}^{1,1},\,\mathbf{v}^{1,\,1-\frac{2}{h}},\,\mathbf{v}^{1-\frac{2}{h},\,1},\,\mathbf{v}^{1-\frac{2}{h},\,1-\frac{2}{h}}\right\} $
forms a square of side length $2$ with vertices in $D_{4}$. 

Now consider the hexagonal face $H_{m,h}$ with adjacent vertices
\[
\mathbf{v}^{-1}=\begin{pmatrix}2m-h\\
-h\\
\hphantom{+}h\\
-h
\end{pmatrix},\qquad\mathbf{v}^{0}=\begin{pmatrix}2m-h\\
-h\\
\hphantom{+}h\\
\hphantom{+}h
\end{pmatrix},\qquad\mathbf{v}^{1}=\begin{pmatrix}m\\
m-2h\\
m\\
m
\end{pmatrix}.
\]
 The points of $D_{4}$ on $\conv\left\{ \mathbf{v}^{0},\mathbf{v}^{-1}\right\} $
and $\conv\left\{ \mathbf{v}^{0},\mathbf{v}^{1}\right\} $ closest
to but distinct from $\mathbf{v}^{0}$ are 
\[
\mathbf{v}^{-\frac{1}{h}}=\begin{pmatrix}2m-h\\
-h\\
h\\
h-2
\end{pmatrix}\qquad\text{and}\qquad\mathbf{v}^{\frac{1}{m-h}}=\begin{pmatrix}2m-h-1\\
1-h\\
h+1\\
h+1
\end{pmatrix}.
\]
 Let 
\[
\mathbf{v}=\mathbf{v}^{0}+\left(\mathbf{v}^{\frac{1}{m-h}}-\mathbf{v}^{0}\right)+\left(\mathbf{v}^{-\frac{1}{h}}-\mathbf{v}^{0}\right)=\begin{pmatrix}2m-h-1\\
1-h\\
h+1\\
h-1
\end{pmatrix}\in D_{4},
\]
 then $\left\{ \mathbf{v}^{0},\mathbf{v}^{\frac{1}{m-h}},\mathbf{v}\right\} $
form an equilateral triangle of side length $2$ with vertices in
$D_{4}$. 
\end{proof}
\pagebreak{}

\subsection{Faces of $t_{h}^{3}\left(Y_{m}\right)$ }
\noindent \begin{center}
\begin{figure}[H]
\noindent \begin{centering}
\includegraphics[width=6.24in]{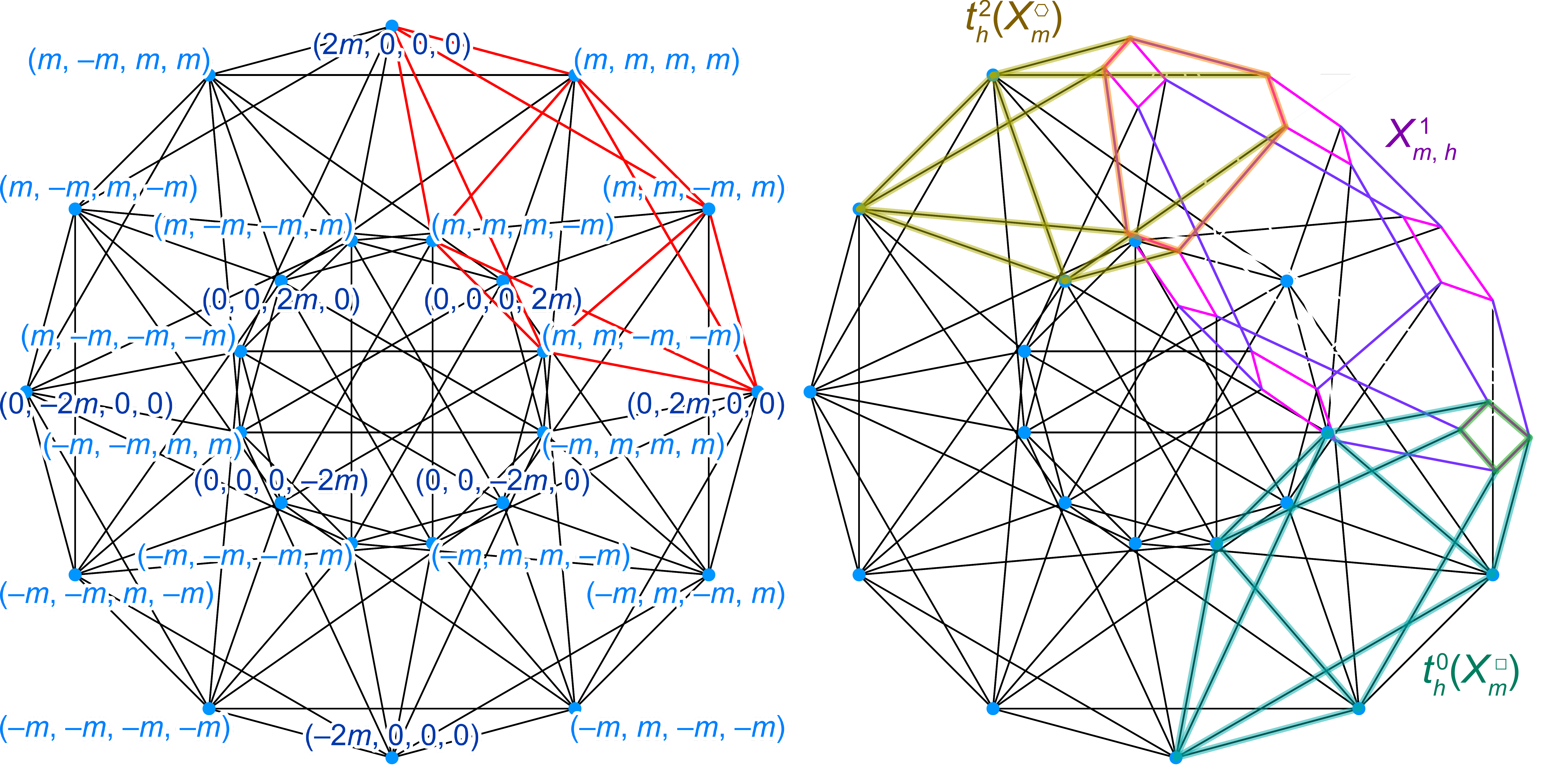}
\par\end{centering}
\caption{\protect\label{fig: Truncated facets of t_=00007Bh=00007D^=00007B3=00007D(Y_m)}\emph{Left:}
A diagram of $Y_{m}$ with the edges of $X_{m}^{1}$ in \emph{red}.
\emph{Right:} A diagram of $t_{h}^{3}\left(Y_{m}\right)$ with the
facet $X_{m,h}^{1}$ in \emph{pink} (square edges) and \emph{purple}
(other edges), the facet $t_{h}^{2}\left(X_{m}^{\hexagon}\right)$
in \emph{orange} (its intersection with $X_{m,h}^{1}$) and \emph{yellow}
(other edges), and the facet $t_{h}^{0}\left(X_{m}^{\square}\right)$
in \emph{green} (its intersection with $X_{m,h}^{1}$) and \emph{teal}
(other edges). }
\end{figure}
\begin{figure}[H]
\noindent \begin{centering}
\includegraphics[width=6.24in]{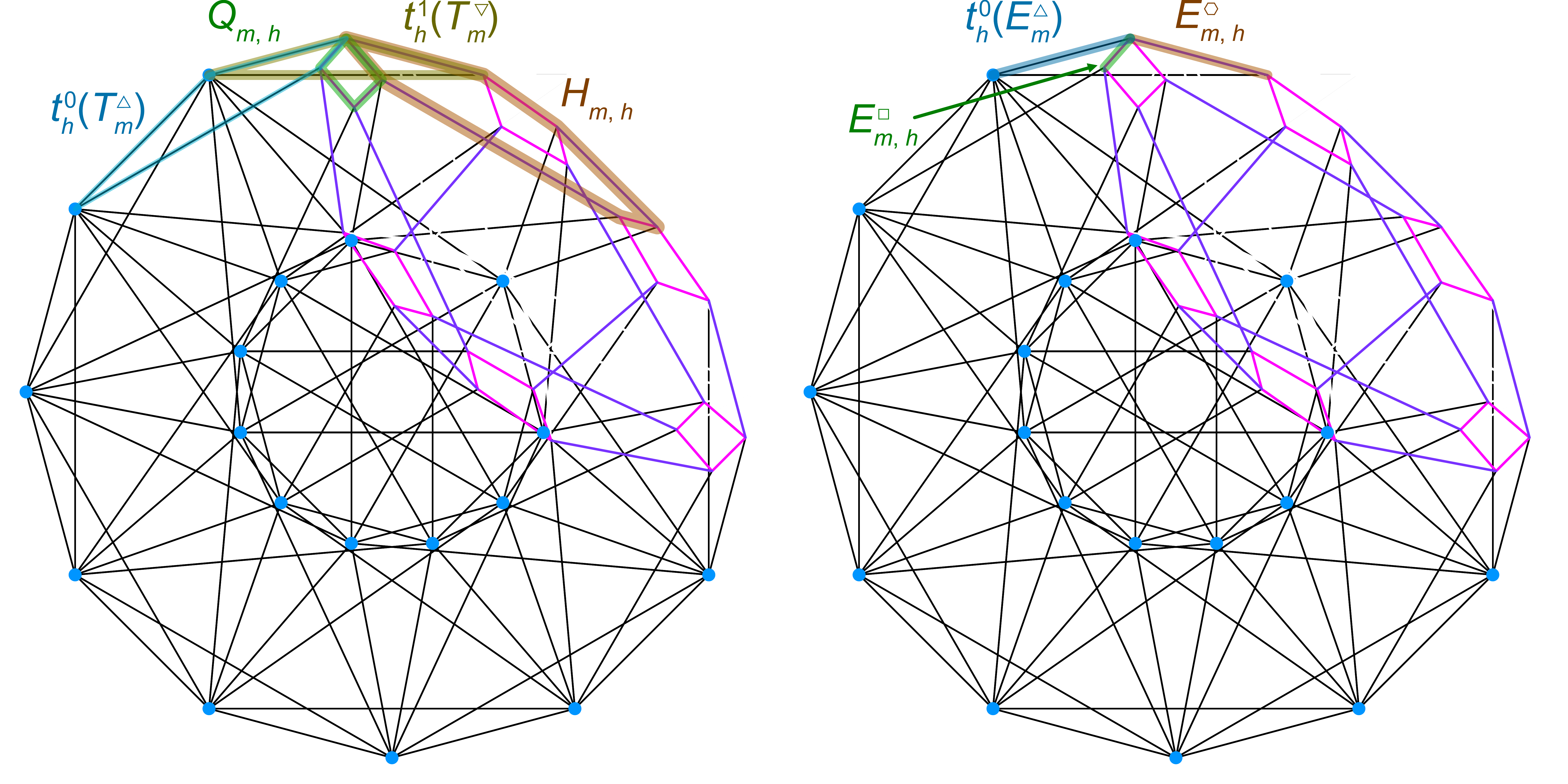}
\par\end{centering}
\caption{\protect\label{fig: Truncated faces and edges of t_=00007Bh=00007D^=00007B3=00007D(Y_m)}Diagrams
of $t_{h}^{3}\left(Y_{m}\right)$ with the facet $X_{m,h}^{1}$ in
\emph{pink} (square edges) and \emph{purple} (other edges). \emph{Left:}
The edges of the faces $Q_{m,h}$, $H_{m,h}$, $t_{h}^{1}\left(T_{m}^{\triangledown}\right)$,
and $t_{h}^{0}\left(T_{m}^{\triangle}\right)$ are colored \emph{green},
\emph{orange}, \emph{yellow}, and \emph{blue} respectively. \emph{Right:}
The edges $E_{m,h}^{\square}$, $E_{m,h}^{\hexagon}$, and $t_{h}^{0}\left(E_{m}^{\triangle}\right)$
are colored \emph{green}, \emph{orange}, and \emph{blue} respectively. }
\end{figure}
\par\end{center}

\pagebreak{}

\subsection{Density graphs for $t_{\mathbf{h}}^{3}\left(Y_{m}\right)$ }

This subsection presents graphs of the densities of various truncated
$24$-cells $t_{\mathbf{h}}^{3}\left(Y_{m}\right)$ . 
\noindent \begin{center}
\begin{figure}[H]
\noindent \begin{centering}
\includegraphics[width=6.24in]{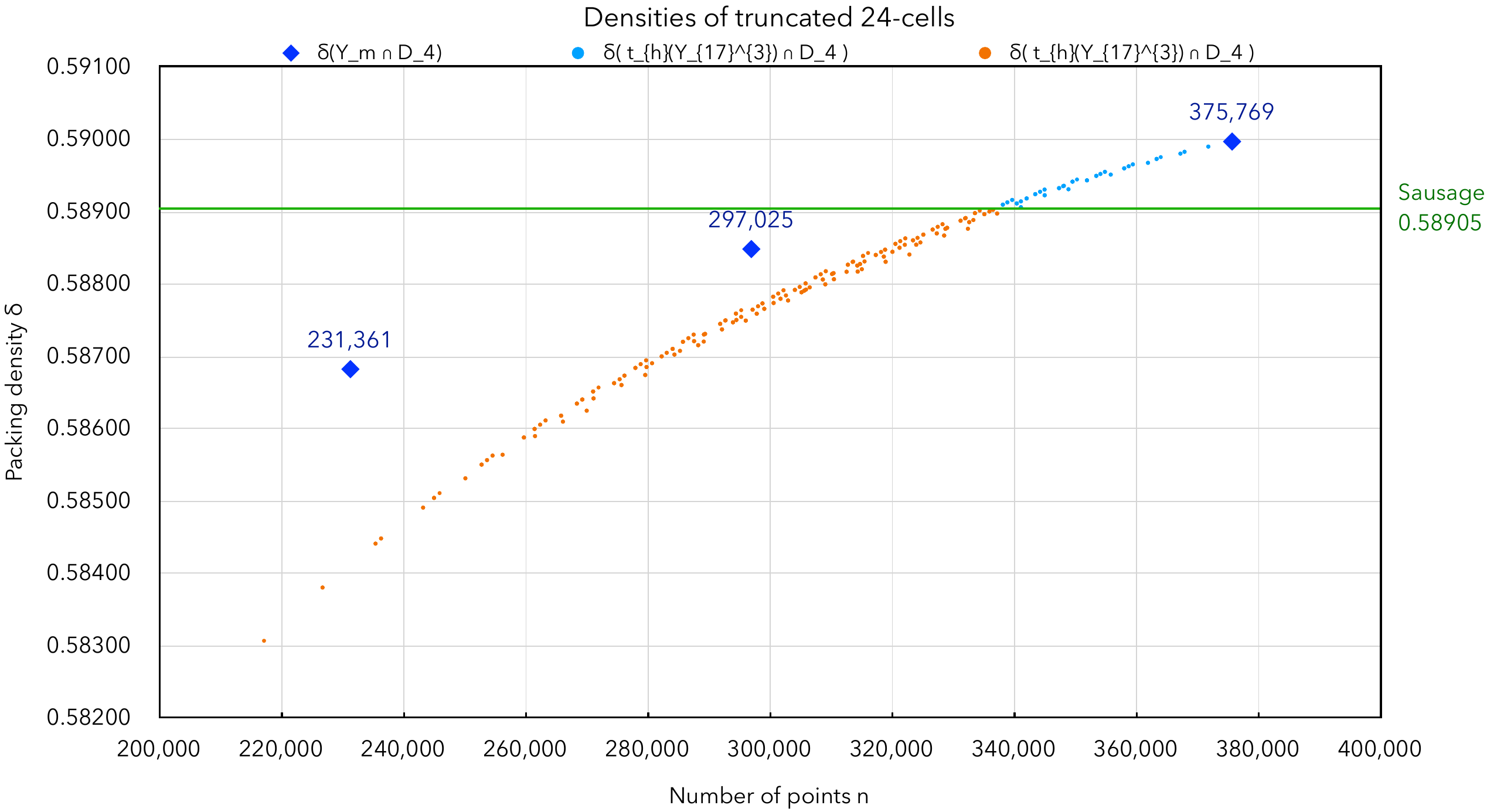}
\par\end{centering}
\caption{\protect\label{fig: Densities of the truncated 24-cells}The densities
of $t_{\mathbf{h}}^{3}\left(Y_{17}\right)\cap D_{4}$ for $\mathbf{h}\in\left\{ 0,\ldots,8\right\} ^{3}$.}

The \emph{green horizontal line} is the density $\delta\left(S_{n}^{d}\right)$
of the sausage, the \emph{dark blue diamonds} are $\delta\left(X_{16}\cap D_{4}\right)$
and $\delta\left(X_{17}\cap D_{4}\right)$, the \emph{light blue circles}
are packings $t_{\mathbf{h}}^{3}\left(Y_{17}\right)\cap D_{4}$ that
are denser than the sausage, and the \emph{orange circles} are packings
$t_{\mathbf{h}}^{3}\left(Y_{17}\right)\cap D_{4}$ that are less dense
than the sausage.
\end{figure}
\par\end{center}

\noindent \begin{center}
\begin{figure}[H]
\noindent \begin{centering}
\includegraphics[width=6.24in]{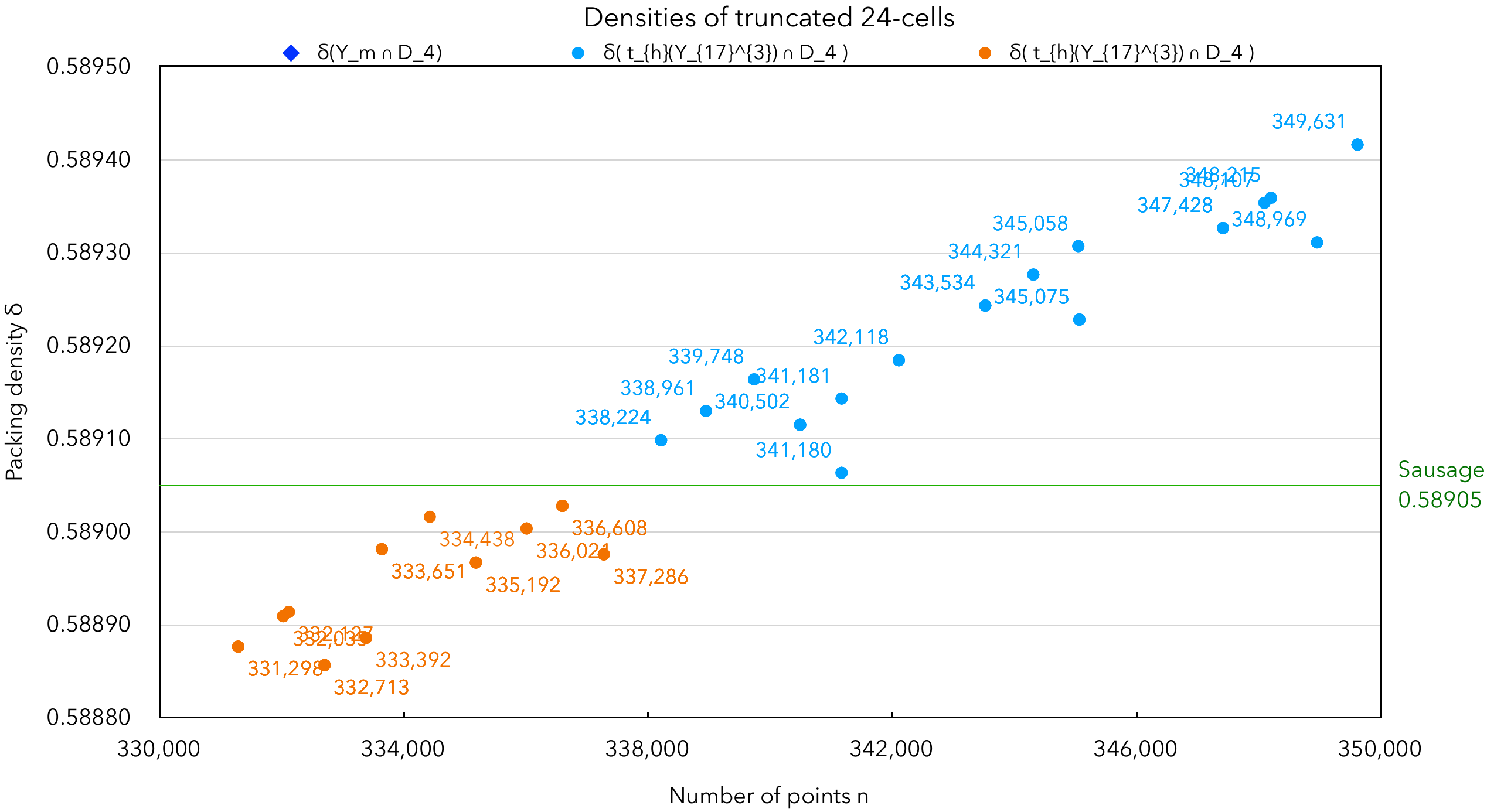}
\par\end{centering}
\caption{\protect\label{fig: Densities of the truncated 24-cells, zoom in}Figure
\ref{fig: Densities of the truncated 24-cells}, zoomed in around
$n=338,\!224$.}
\end{figure}
\par\end{center}

\pagebreak{}

\subsection{\protect\label{subsec: Graphs of 24-cell truncations}Density graphs
for $p_{k}\left(t_{\mathbf{h}}^{3}\left(Y_{m}\right)\cap D_{4}\right)$ }

\subsubsection{\protect\label{subsec: Graphs of 24-cell truncations, descriptions, overview, and formulas}Descriptions
and overview }

In the graphs of this subsection it is helpful to see the packing
densities of the packings $p_{k}\left(t_{\mathbf{h}}^{3}\left(Y_{17}\right)\cap D_{4}\right)$
along with the intervals $\widetilde{L}\left(t_{\mathbf{h}}^{3}\left(Y_{17}\right)\cap D_{4}\right)\subset\mathbb{N}$.
Since they are visual aids only and are not involved in any proofs,
our discussion here will be somewhat informal. 
\begin{defn}
\label{def: The TRIANGLE}Let $C\subset\mathbb{R}^{d}$ be a packing
of $n\geq3$ points. Define $\widetilde{\triangle}\left(C\right)\subset\mathbb{R}^{2}$\nomenclature[Z1 TRIANGLE]{$\widetilde{\triangle}\left(C\right)$}{The set in $\mathbb{R}^{2}$ defined as the convex hull of the following vertices: \\$\left(n-\widetilde{r}\left(C,d\right),\,\widetilde{\delta}\left(p_{r\left(C,n,d\right)}\left(C\right)\right)\right),\ \ldots,\ \left(n-1,\widetilde{\delta}\left(p_{1}\left(C\right)\right)\right),\ \left(n,\delta\left(C\right)\right)$ and \\$\left(n-\widetilde{r}\left(C,d\right),\,\delta\left(S_{n-r\left(C,n,d\right)}^{d}\right)\right),\ \ldots,\ \left(n-1,\delta\left(S_{n-1}^{d}\right)\right),\ \left(n,\delta\left(S_{n}^{d}\right)\right)$}
by 
\[
\widetilde{\triangle}\left(C\right):=\conv\left\{ \begin{array}{llll}
\left(n-\widetilde{r}\left(C,d\right),\,\widetilde{\delta}\left(p_{r\left(C,n,d\right)}\left(C\right)\right)\right), & \ldots, & \left(n-1,\widetilde{\delta}\left(p_{1}\left(C\right)\right)\right), & \left(n,\delta\left(C\right)\right),\\
\\
\left(n-\widetilde{r}\left(C,d\right),\,\delta\left(S_{n-r\left(C,n,d\right)}^{d}\right)\right), & \ldots, & \left(n-1,\delta\left(S_{n-1}^{d}\right)\right), & \left(n,\delta\left(S_{n}^{d}\right)\right)
\end{array}\right\} 
\]
 if $\delta\left(C\right)\geq\delta\left(S_{n}^{d}\right)$ and $\widetilde{\triangle}\left(C\right):=\emptyset$
if $\delta\left(C\right)<\delta\left(S_{n}^{d}\right)$. 
\end{defn}

While $\widetilde{\triangle}\left(C\right)$ is typically a polygon
with $\widetilde{r}\left(C,d\right)+3$ sides, it visually resembles
the right triangle\nomenclature[Z2 TRIANGLE, hat]{$\widehat{\triangle}\left(C\right)$}{The right triangle with vertices $\left(n-\widetilde{r}\left(C,d\right),\,\delta\left(S_{n-r\left(C,n,d\right)}^{d}\right)\right)$, $\left(n,\delta\left(S_{n}^{d}\right)\right)$, and $\left(n,\delta\left(C\right)\right)$}
\[
\widehat{\triangle}\left(C\right):=\conv\left\{ \begin{array}{ll}
 & \left(n,\delta\left(C\right)\right),\\
\left(n-\widetilde{r}\left(C,d\right),\,\delta\left(S_{n-r\left(C,n,d\right)}^{d}\right)\right), & \left(n,\delta\left(S_{n}^{d}\right)\right)
\end{array}\right\} ,
\]
 and in our graphs we will draw $\widehat{\triangle}$ instead of
$\widetilde{\triangle}$. 
\noindent \begin{center}
\begin{figure}[H]
\noindent \begin{centering}
\includegraphics[width=6.24in]{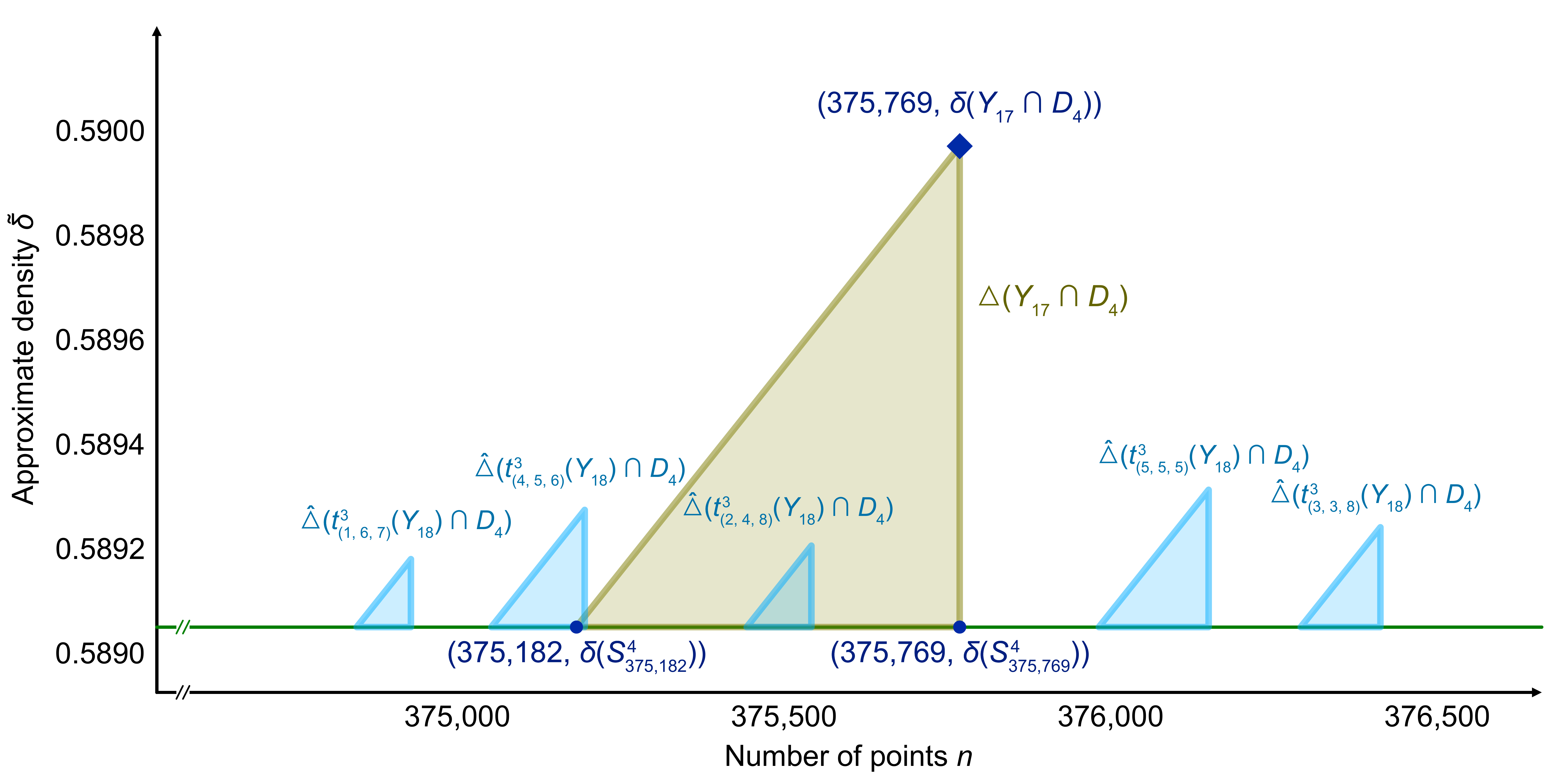}
\par\end{centering}
\caption{\protect\label{fig: The TRIANGLES}The triangles $\widehat{\triangle}\left(Y_{17}\cap D_{4}\right)$
and $\widehat{\triangle}\left(t_{-\mathbf{h}}^{3}\left(Y_{18}\right)\cap D_{4}\right)$
for certain values of $m$ and $\mathbf{h}$. }

The \emph{dark yellow triangle} is $\widehat{\triangle}\left(Y_{17}\cap D_{4}\right)$
and the \emph{light blue triangles} are five distinct $\widehat{\triangle}\left(t_{-\mathbf{h}}^{3}\left(Y_{18}\right)\cap D_{4}\right)$'s.
The smallest and largest values in $\widetilde{L}\left(Y_{17}\cap D_{4}\right)$
are also marked on the diagram: $G\left(Y_{17}\right)=375,\!769$
and $G\left(Y_{17}\right)-\widetilde{r}\left(Y_{17}\cap D_{4},4\right)=375,\!182$.
The \emph{green line} is $y=\delta\left(S_{n}^{4}\right)$.
\end{figure}
\par\end{center}

The set $\widehat{\triangle}\left(C\right)$ is also closely related
to $\widetilde{L}\left(C\right)$; its projection onto the first coordinate
is simply $\widetilde{L}\left(C\right)$, and two packings $C$ and
$C'$ satisfy $\widehat{\triangle}\left(C\right)\cap\widehat{\triangle}\left(C'\right)\neq\emptyset$
if and only if $\widetilde{L}\left(C\right)\cap\widetilde{L}\left(C'\right)\neq\emptyset$.
The following graph is a bird's-eye-view of (the boundaries of) the
triangles $\widehat{\triangle}\left(t_{\mathbf{h}}^{3}\left(Y_{m}\right)\cap D_{4}\right)$
for all $m\in\left\{ 17,\ldots,21\right\} $ and $\mathbf{h}\in\left\{ 0,\ldots,\left\lfloor \frac{m-1}{2}\right\rfloor \right\} ^{3}$. 
\noindent \begin{center}
\begin{figure}[H]
\noindent \begin{centering}
\includegraphics[width=6.24in]{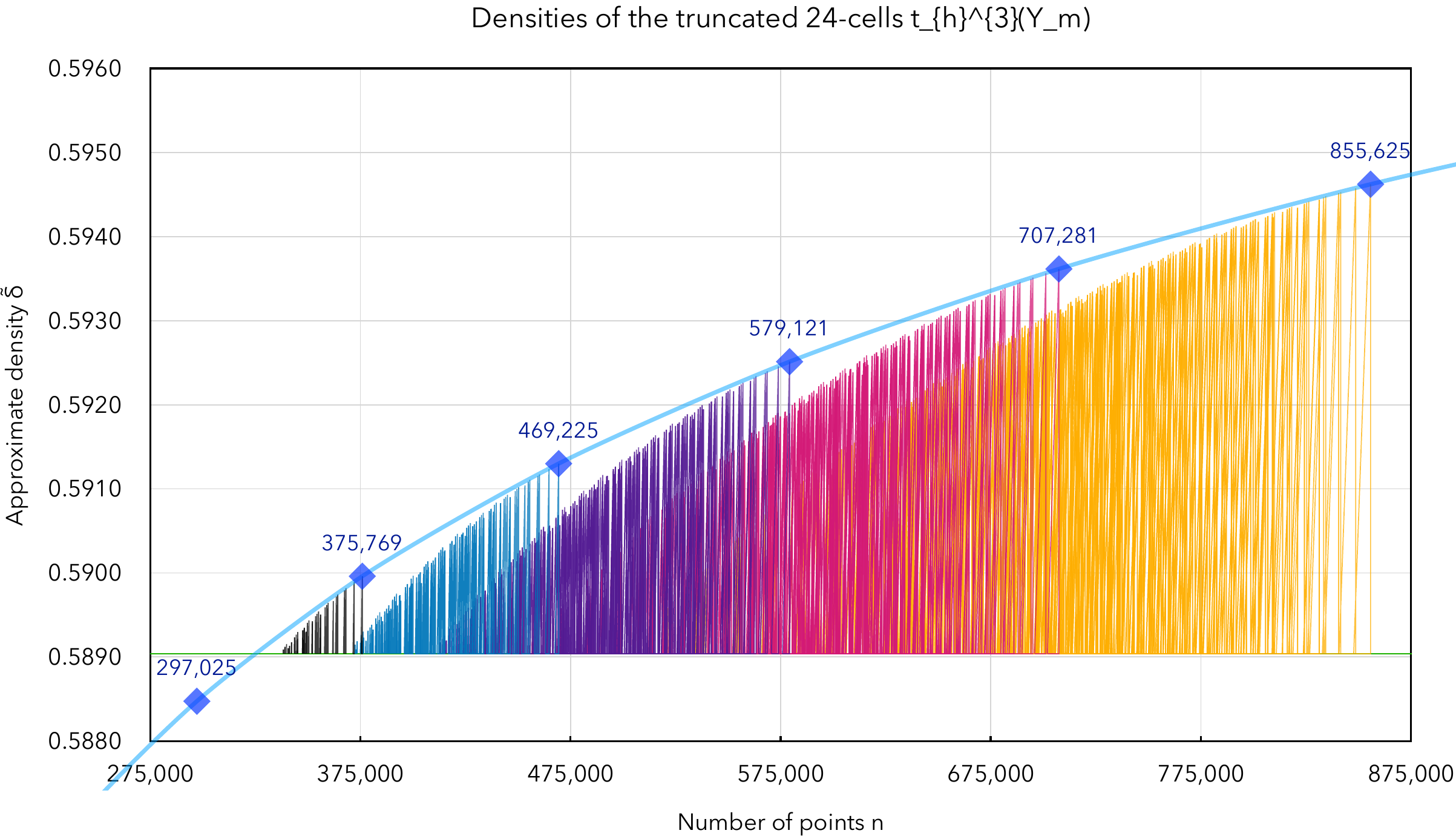}
\par\end{centering}
\caption{\protect\label{fig: A bird's eye view of the triangles}The \textquotedblleft triangles\textquotedblright{}
$\widehat{\triangle}\left(t_{\mathbf{h}}^{3}\left(Y_{m}\right)\cap D_{4}\right)$
for $m\in\left\{ 17,\ldots,21\right\} $ and $\mathbf{h}\in\left\{ 0,\ldots,\left\lfloor \frac{m-1}{2}\right\rfloor \right\} ^{3}$.}

\noindent The \emph{green line} near the bottom is the density $\delta\left(S_{n}^{4}\right)$
of the sausage, the \emph{large blue diamonds} are the points $\left(x,y\right)=\left(G\left(Y_{m}\right),\delta\left(Y_{m}\cap D_{4}\right)\right)$,
and the \emph{pale blue curve} is an interpolation between the points
$\left(G\left(Y_{m}\right),\delta\left(Y_{m}\cap D_{4}\right)\right)$
for $m\in\left\{ 1,\ldots,24\right\} $. The spikes in the middle
are $\widehat{\triangle}\left(t_{\mathbf{h}}^{3}\left(Y_{m}\right)\cap D_{4}\right)$
and are colored \emph{black}, \emph{dark blue}, \emph{purple}, \emph{pink},
and \emph{yellow} for $m=17$, $18$, $19,$$20,$and $21$ respectively. 
\end{figure}
\par\end{center}

In the subsequent graphs below we zoom in on various portions of this
graph, which will show some of the gaps in, and coverings of, the
sets $\widetilde{L}\left(t_{\mathbf{h}}^{3}\left(Y_{m}\right)\cap D_{4}\right)$
for $m\in\left\{ 17,\ldots,21\right\} $ and $h\in\left\{ 0,\ldots,\left\lfloor \frac{m-1}{2}\right\rfloor \right\} ^{3}$.
In particular, the graphs show that 
\[
\left\{ 516,\!837,\ \ldots,\ 516,935\right\} \cap\left(\bigcup_{m=1}^{m'}\bigcup_{h\in\left\{ 0,\ldots,\left\lfloor \frac{m-1}{2}\right\rfloor \right\} ^{3}}\widetilde{L}\left(t_{\mathbf{h}}^{3}\left(Y_{m}\right)\cap D_{4}\right)\right)=\emptyset
\]
 and 
\begin{align*}
\widetilde{L}\left(t_{\left(0,0,1\right)}^{3}\left(Y_{m}\right)\cap D_{4}\right)\cap\widetilde{L}\left(t_{\left(0,0,0\right)}^{3}\left(Y_{m}\right)\cap D_{4}\right) & =\emptyset\qquad\text{for }m\in\left\{ 17,18,19,20\right\} \text{,}\\
\widetilde{L}\left(t_{\left(0,0,1\right)}^{3}\left(Z_{21}\right)\cap D_{4}\right)\cap\widetilde{L}\left(t_{\left(0,0,0\right)}^{3}\left(Z_{21}\right)\cap D_{4}\right) & \neq\emptyset,
\end{align*}
 which illustrate the inequality $N_{4}^{*}\leq516,\!946$. 

In the next subsubsection we enlarge specific regions of Figure \ref{fig: A bird's eye view of the triangles}
for a clearer view. For clarity, we change the colors so that $\delta\left(S_{n}^{d}\right)$
is (usually dark) red while $\widehat{\triangle}\left(t_{\mathbf{h}}^{3}\left(Y_{m}\right)\cap D_{4}\right)$
is light grey or black for odd $m$ and blue for even $m$. Each graph
focuses on $\widehat{\triangle}\left(t_{-\mathbf{h}}^{3}\left(Y_{m}\right)\cap D_{4}\right)$
for a fixed $m$, however, the triangles $\widehat{\triangle}\left(t_{-\mathbf{h}}^{3}\left(Y_{m}\right)\cap D_{4}\right)$
and $\widehat{\triangle}\left(t_{\mathbf{h}}^{3}\left(Y_{m'}\right)\cap D_{4}\right)$
may overlap for $m\neq m'$. We omit any triangle $\widehat{\triangle}\left(t_{\mathbf{h}}^{3}\left(Y_{m}\right)\cap D_{4}\right)$
with $m\geq22$, in part to show the overlap of the triangles $\widehat{\triangle}\left(t_{\left(0,0,0\right)}^{3}\left(Y_{21}\right)\cap D_{4}\right)$
and $\widehat{\triangle}\left(t_{\left(0,0,1\right)}^{3}\left(Y_{21}\right)\cap D_{4}\right)$
in Figure \ref{fig: The triangles for m =00003D 21}. Another graph
(Figure \ref{fig: The triangles for m =00003D 19, zoom}) zooms in
heavily around the interval $\left[516,\!836,\ 516,\!946\right]$,
illustrating the gap in the triangles immediately preceding $n=516,\!946$,
since the intermediate points $n\in\left\{ 516,\!837,\ \ldots,\ 516,\!945\right\} $
are not covered by the union 
\[
\bigcup_{m\in\mathbb{N}}\bigcup_{\mathbf{h}\in\left\{ 0,\ldots,\left\lfloor \frac{m-1}{2}\right\rfloor \right\} ^{3}}\widetilde{L}\left(t_{\mathbf{h}}^{3}\left(Y_{m}\right)\cap D_{4}\right).
\]

\pagebreak{}

\subsubsection{Detailed graphs of $\widehat{\triangle}\left(t_{-\mathbf{h}}^{3}\left(Y_{m}\right)\cap D_{4}\right)$ }
\noindent \begin{center}
\begin{figure}[H]
\noindent \begin{centering}
\includegraphics[width=6.24in]{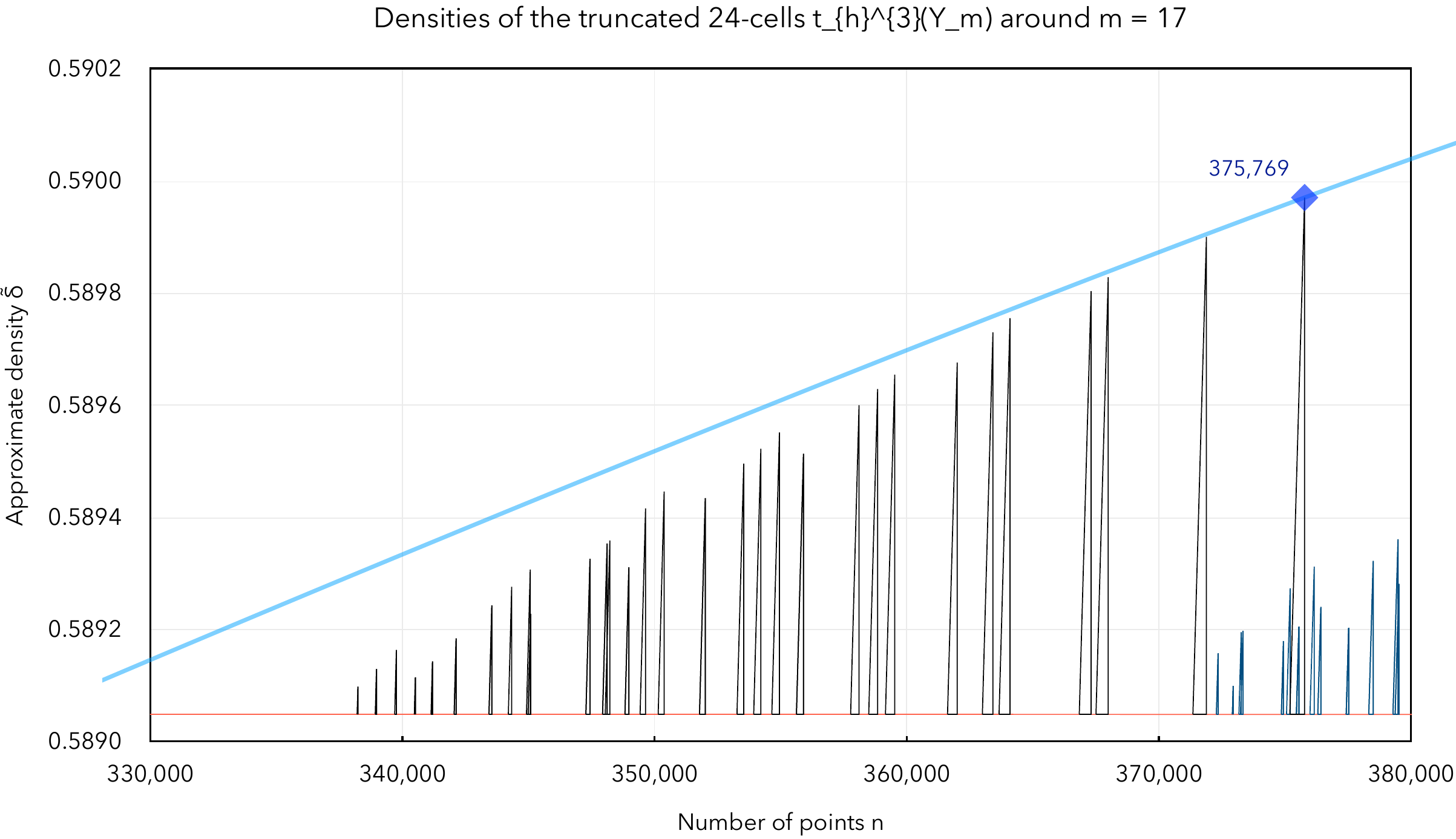}
\par\end{centering}
\caption{\protect\label{fig: The triangles for m =00003D 17}The \textquotedblleft triangles\textquotedblright{}
$\widehat{\triangle}\left(t_{\mathbf{h}}^{3}\left(Y_{17}\right)\cap D_{4}\right)$
for $\mathbf{h}\in\left\{ 0,\ldots,\left\lfloor \frac{m-1}{2}\right\rfloor \right\} ^{3}$.}
\end{figure}
\begin{figure}[H]
\noindent \begin{centering}
\includegraphics[width=6.24in]{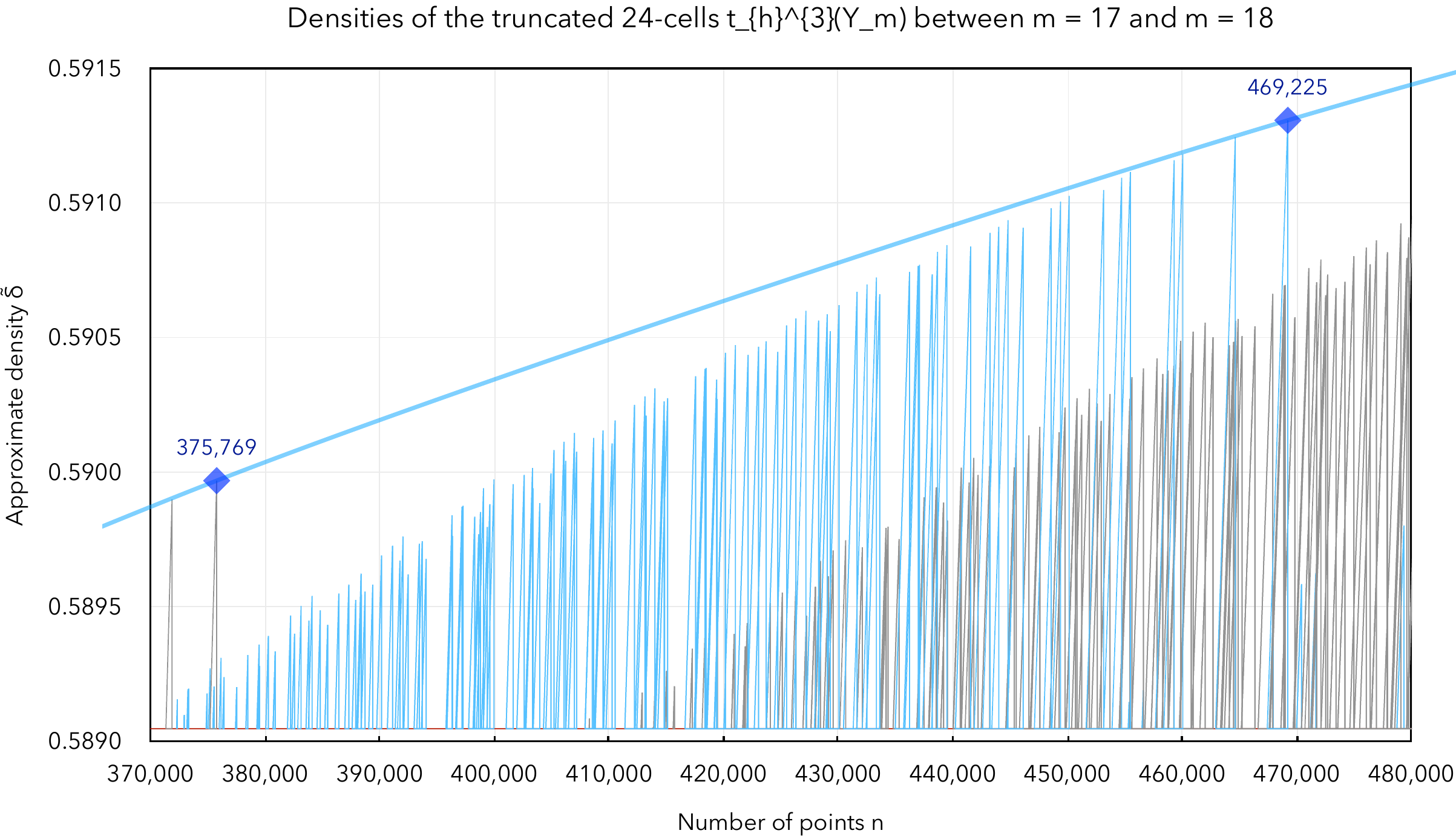}
\par\end{centering}
\caption{\protect\label{fig: The triangles for m =00003D 18}The \textquotedblleft triangles\textquotedblright{}
$\widehat{\triangle}\left(t_{\mathbf{h}}^{3}\left(Y_{18}\right)\cap D_{4}\right)$
for $\mathbf{h}\in\left\{ 0,\ldots,\left\lfloor \frac{m-1}{2}\right\rfloor \right\} ^{3}$.}
\end{figure}
\par\end{center}

\pagebreak{}
\noindent \begin{center}
\begin{figure}[H]
\noindent \begin{centering}
\includegraphics[width=6.24in]{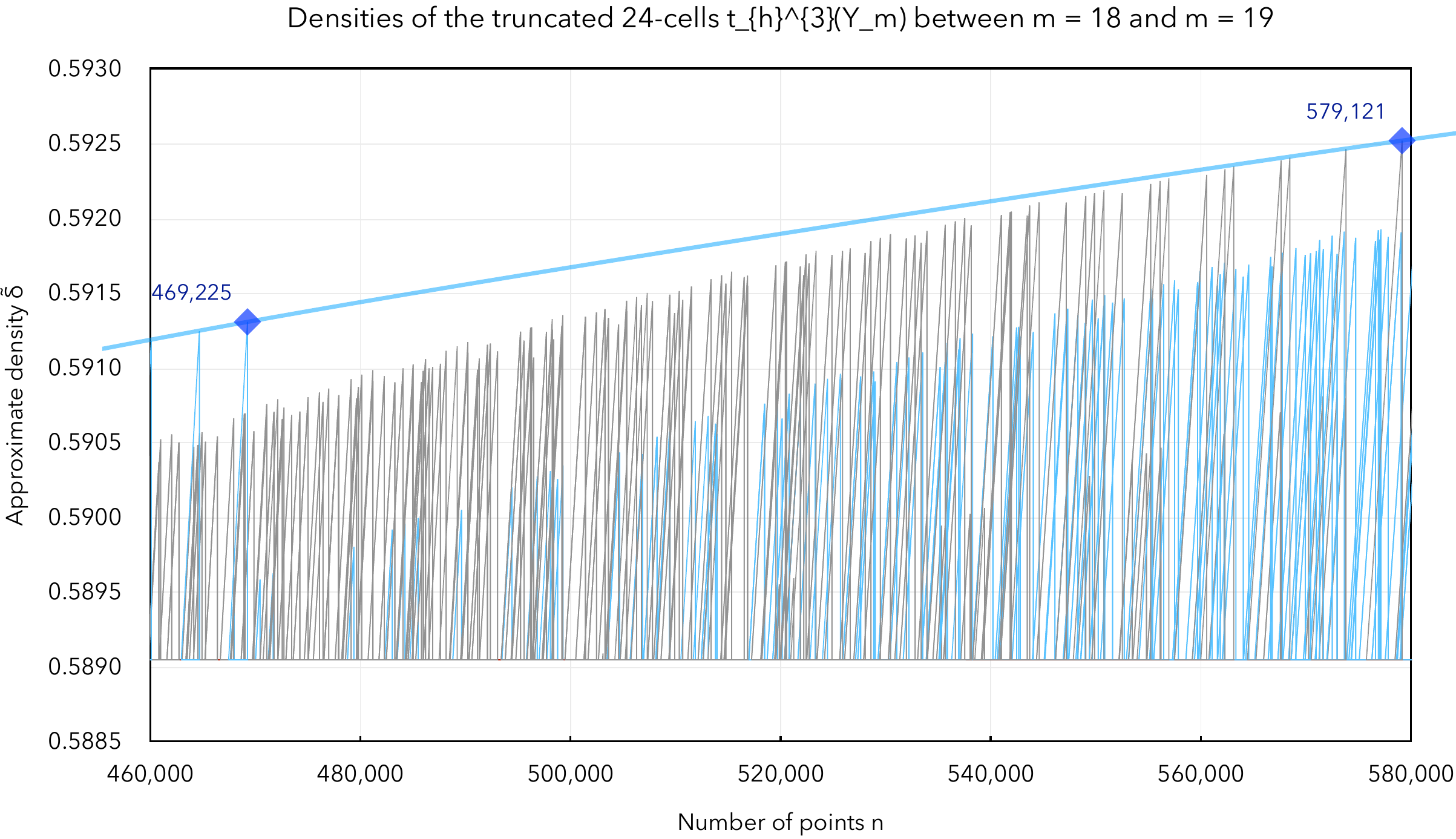}
\par\end{centering}
\caption{\protect\label{fig: The triangles for m =00003D 19}The \textquotedblleft triangles\textquotedblright{}
$\widehat{\triangle}\left(t_{\mathbf{h}}^{3}\left(Y_{19}\right)\cap D_{4}\right)$
for $\mathbf{h}\in\left\{ 0,\ldots,\left\lfloor \frac{m-1}{2}\right\rfloor \right\} ^{3}$.}
\end{figure}
\begin{figure}[H]
\noindent \begin{centering}
\includegraphics[width=6.24in]{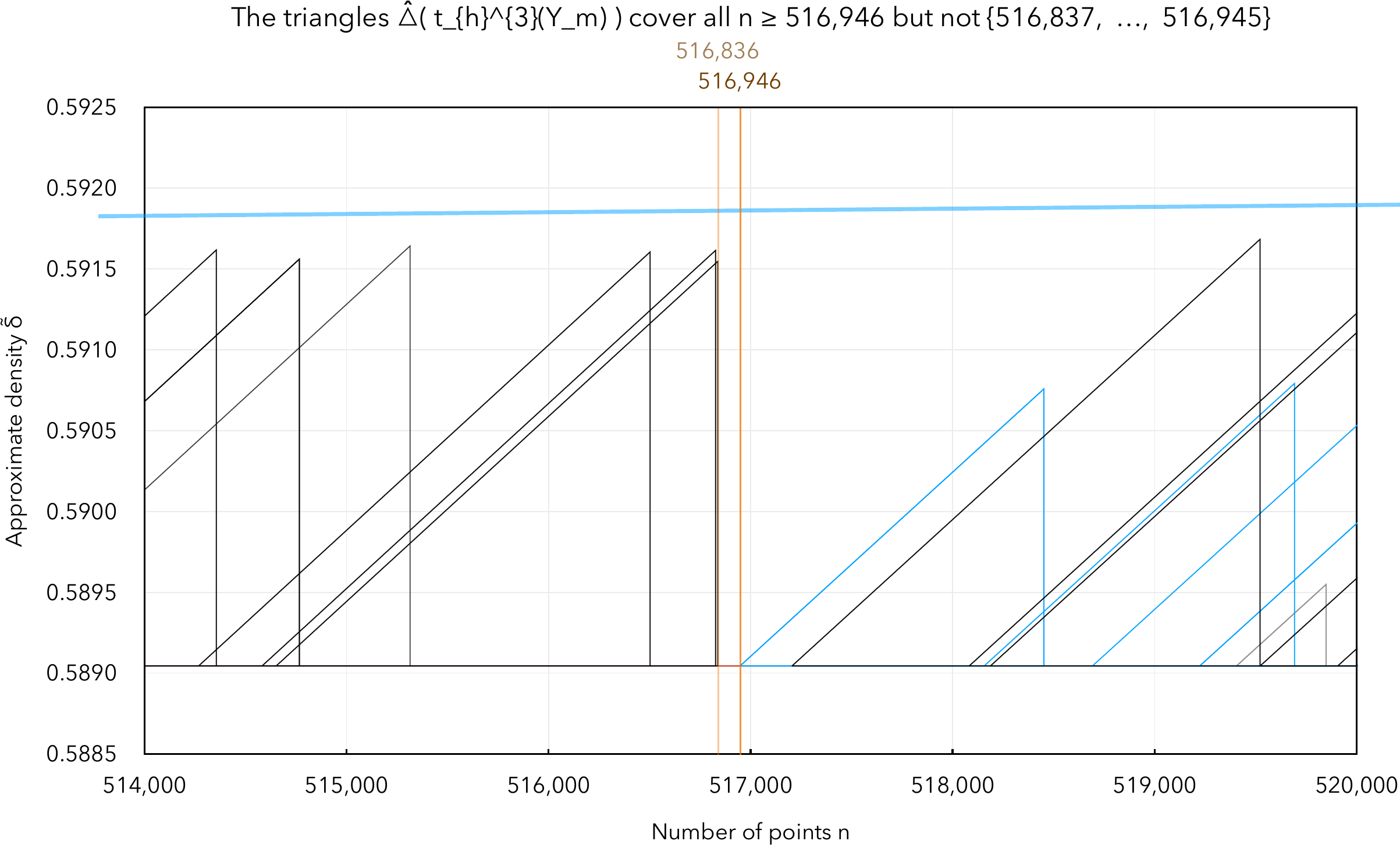}
\par\end{centering}
\caption{\protect\label{fig: The triangles for m =00003D 19, zoom}The \textquotedblleft triangles\textquotedblright{}
$\widehat{\triangle}\left(t_{\mathbf{h}}^{3}\left(Y_{19}\right)\cap D_{4}\right)$
and $\widehat{\triangle}\left(t_{\mathbf{h}}^{3}\left(Y_{20}\right)\cap D_{4}\right)$
for $\mathbf{h}\in\left\{ 0,\ldots,\left\lfloor \frac{m-1}{2}\right\rfloor \right\} ^{3}$
that are near $n=516,\!946$.}

In this graph, the darker orange vertical line is $x=516,\!946=\min\widetilde{L}\left(t_{\left(4,7,9\right)}^{3}\left(Y_{20}\right)\cap D_{4}\right)$
and the pale orange vertical line is $x=516,\!837=G\left(t_{\left(0,1,7\right)}^{3}\left(Y_{19}\right)\right)$.
\end{figure}
\par\end{center}

\pagebreak{}
\noindent \begin{center}
\begin{figure}[H]
\noindent \begin{centering}
\includegraphics[width=6.24in]{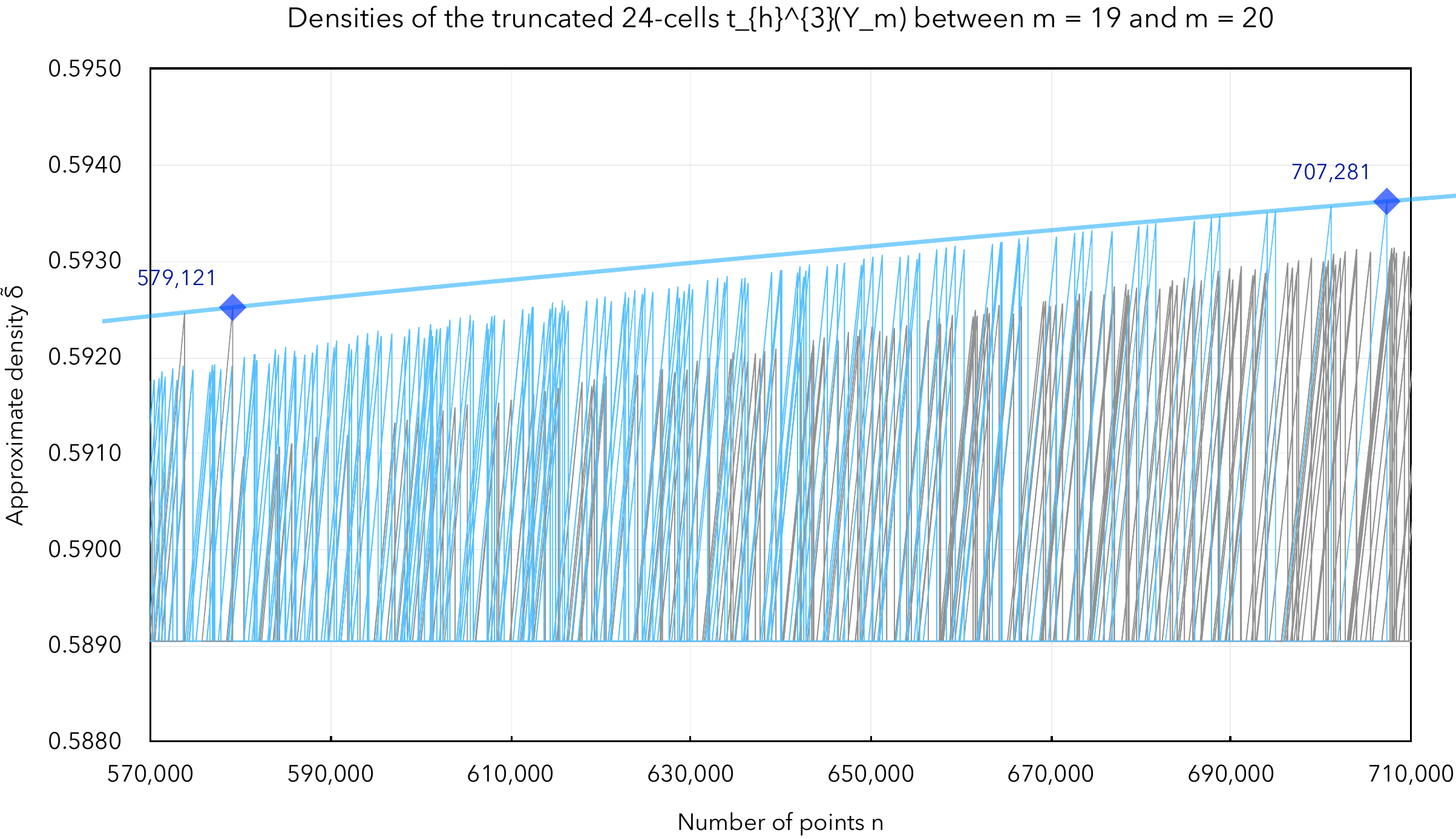}
\par\end{centering}
\caption{\protect\label{fig: The triangles for m =00003D 20}The \textquotedblleft triangles\textquotedblright{}
$\widehat{\triangle}\left(t_{\mathbf{h}}^{3}\left(Y_{20}\right)\cap D_{4}\right)$
for $\mathbf{h}\in\left\{ 0,\ldots,\left\lfloor \frac{m-1}{2}\right\rfloor \right\} ^{3}$.
One can see that $\widetilde{L}\left(Y_{20}\cap D_{4}\right)$ and
$\widetilde{L}\left(t_{\left(0,0,1\right)}^{3}\left(Y_{20}\right)\cap D_{4}\right)$
are disjoint.}
\end{figure}
\begin{figure}[H]
\noindent \begin{centering}
\includegraphics[width=6.24in]{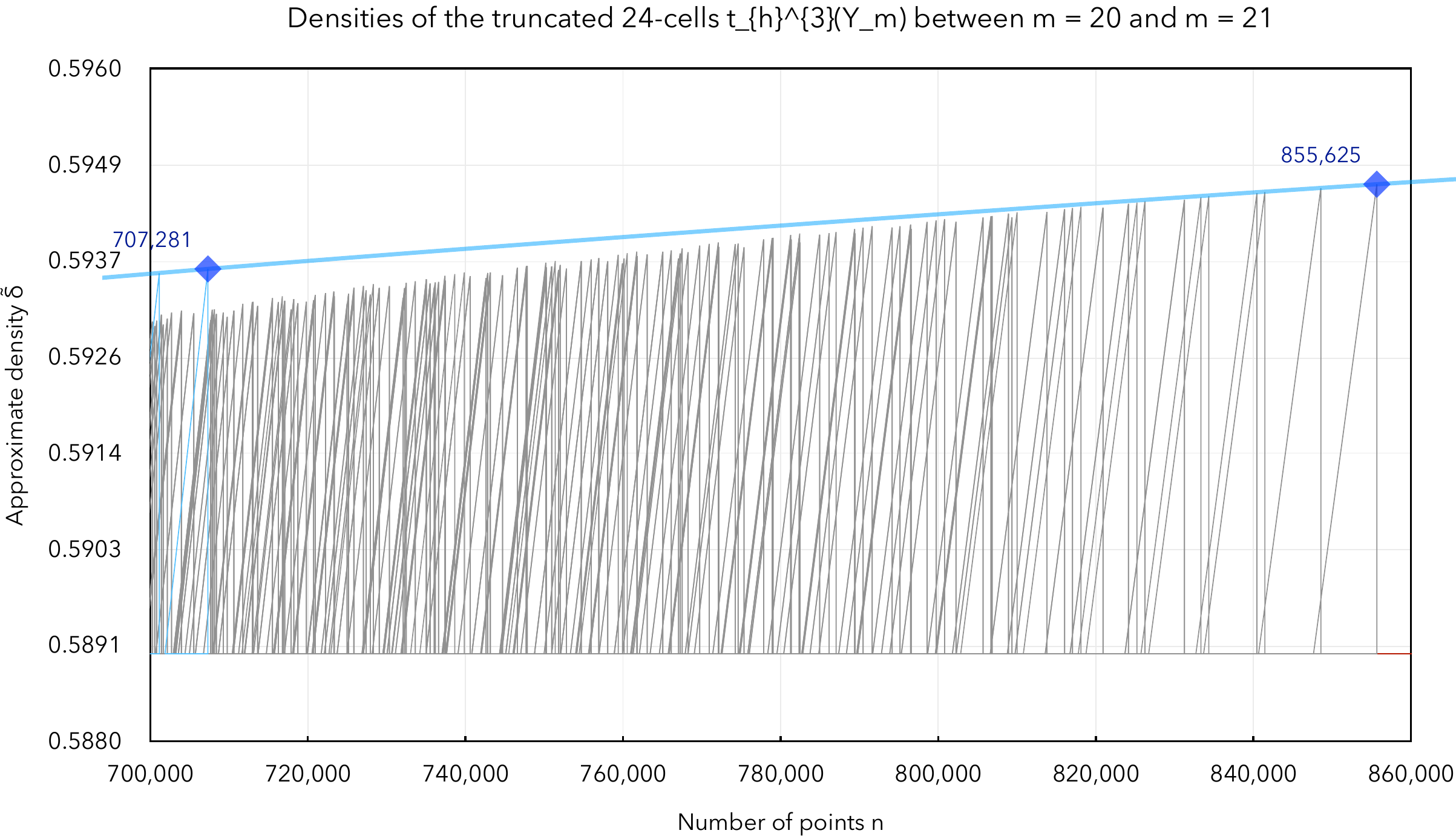}
\par\end{centering}
\caption{\protect\label{fig: The triangles for m =00003D 21}The \textquotedblleft triangles\textquotedblright{}
$\widehat{\triangle}\left(t_{\mathbf{h}}^{3}\left(Y_{21}\right)\cap D_{4}\right)$
for $\mathbf{h}\in\left\{ 0,\ldots,\left\lfloor \frac{m-1}{2}\right\rfloor \right\} ^{3}$.}

The rightmost two triangles in this graph are $\widehat{\triangle}\left(Y_{21}\cap D_{4}\right)$
and $\widehat{\triangle}\left(t_{\left(0,0,1\right)}^{3}\left(Y_{21}\right)\cap D_{4}\right)$
respectively (along with the other permutations $\left(0,1,0\right)$
and $\left(1,0,0\right)$). One can see that $\widetilde{L}\left(Y_{21}\cap D_{4}\right)$
and $\widetilde{L}\left(t_{\left(0,0,1\right)}^{3}\left(Y_{21}\right)\cap D_{4}\right)$
partially overlap. 
\end{figure}
\par\end{center}
\end{document}